\definecolor{winered}{rgb}{0.8,0,0}
\definecolor{bloodorange}{rgb}{0.95,0.05,0}
\newcommand{\jump}[1]{\ensuremath{[\hspace{-0.34ex}[#1]\hspace{-0.34ex}]} }
\newtheorem{proof}{Proof.}
\newtheorem{defn}{Definition}
\newtheorem{rem}{Remark}
\newtheorem{exmp}{Example}
\newtheorem{thm}{Theorem}
\newtheorem{cor}[thm]{Corollary}
\newtheorem{lem}[thm]{Lemma}
\numberwithin{equation}{section}
\def\bB{{\mathbb B}}
\def\R{{\mathbb R}}
\def\bS{{\mathbb S}}
\def\bB{{\mathbb B}}
\def\N{{\mathbb{N}}}
\def\ds{\displaystyle}
\def\ts{\textstyle}
\def\laplace{\varDelta}
\def\laplace{{\varDelta}}
\begin{document}
\label{page:t}

\begin{center}
{\LARGE\textbf{
    Kobayashi--Warren--Carter type systems \\ with nonhomogeneous Dirichlet \\[0.5ex] boundary data for crystalline orientation \footnotemark[1]
  }}
\end{center}
\vspace{7ex}
\begin{center}
{\sc Salvador Moll}\footnotemark[2]\\
Department d'An\`{a}lisi Matem\`{a}tica, Universitat de Val\`{e}ncia\\
C/Dr. Moliner, 50, Burjassot, Spain\\
{\ttfamily j.salvador.moll@uv.es}
\vspace{2ex}

{\sc Ken Shirakawa}\footnotemark[3]\\
Department of Mathematics, Faculty of Education, Chiba University\\
1-33, Yayoi-cho, Inage-ku, Chiba, 263--8522, Japan\\
{\ttfamily sirakawa@faculty.chiba-u.jp}
\vspace{2ex}

{\sc Hiroshi Watanabe}\footnotemark[4]\\
Division of Mathematical Sciences, Faculty of Science and Technology, Oita University\\
700 Dannoharu, Oita, 870--1192, Japan\\
{\ttfamily hwatanabe@oita-u.ac.jp}
\end{center}
\vspace{10ex}

\noindent
{\bf Abstract.}
In this paper we study the Dirichlet problem for the Kobayashi--Warren--Carter system. This system of parabolic PDE's models the grain boundary motion in a polycrystal with a prescribed orientation at the boundary of the domain. We obtain global existence in time of energy-dissipative solutions. The regularity of the solutions as well as the energy-dissipation property permit us to derive the steady-state problem as the asymptotic in time limit of the system. We finally study the $\omega$-limit set of the solutions; we completely characterize it in the one dimensional case, showing, in particular that orientations in the $\omega$-limit set belong to thee space of SBV functions. In the two dimensional case, we give sufficient conditions for existence of radial symmetric piecewise constants solutions.

\footnotetext[0]{
{\bf Keywords:} parabolic system, grain boundary motion,
$\omega$-limit, energy dissipation, 
SBV-functions
$\empty^*$\,AMS Subject Classification
	35K87, 
	35R06, 
	35K67.  

$\empty^\dag$\,This author is supported by the Spanish MCIU and FEDER project PGC2018-094775-B-I00.

$\empty^\ddag$\,This author is supported by Grant-in-Aid No. 16K05224 and 20K03672, JSPS.

$\empty^\S$\,This author is supported by Grant-in-Aid No. 20K03696 and 21K03312, JSPS.
}
\newpage

\section*{Introduction}
\ \ \vspace{-4ex}

Let $ 0 < T < \infty $ and $ N \in \N $ be fixed constants. Let $ \Omega \subset \R^N $ be a bounded domain with a smooth boundary $ \Gamma $ if $ N > 1 $. Let $ n_\Gamma $ be the unit outer normal on $ \Gamma $. Let $ Q := (0, T) \times \Omega $ be the product set of the time-interval $ (0, T) $ and the spatial domain $ \Omega $, and let $ \Sigma := (0, T) \times \Gamma $.

In this paper, we consider a coupled system, denoted by (S). Our system consists of the following two initial-boundary problems.
\bigskip

(S):
\begin{equation}\label{1st.eq}
\left\{ ~ \parbox{9cm}{
$ \eta_t -{\mit \Delta} \eta +g(\eta) +\alpha'(\eta)|D \theta| = 0 $, \ $ (t, x) \in Q $,
\\[1ex]
    $ \ds \nabla \eta \cdot n_\Gamma +\alpha'(\eta) |\theta -\gamma| = 0 $, \ $ (t, x) \in \Sigma $,
\\[1ex]
$ \eta(0, x) = \eta_0(x) $, \ $ x \in \Omega $;
} \right.
\end{equation}
\begin{equation}\label{2nd.eq}
\left\{ ~ \parbox{9cm}{
$ \ds \alpha_0(\eta) \theta_t -{\rm div} \left( \alpha(\eta) \frac{D \theta}{|D \theta|} \right) = 0 $, \ $ (t, x) \in Q $,
\\[1ex]
    $ \theta = \gamma $, \ $ (t, x) \in \Sigma $,
\\[1ex]
$ \theta(0, x) = \theta_0(x) $, \ $ x \in \Omega $.
} \right.
\end{equation}

The system (S) is called ``Kobayashi--Warren--Carter system'', and as the name suggests, it is based on a mathematical model proposed by Kobayashi et al \cite{kobayashi2000continuum,MR1794359} on planar grain boundary motion. According to the original works \cite{kobayashi2000continuum,MR1794359}, the unknowns $ \eta = \eta(t, x) $ and $ \theta = \theta(t, x) $ represent, respectively, the ``orientation order'' and the ``orientation angle'' in a polycrystal. Based on this, the initial-boundary problems (\ref{1st.eq}) and (\ref{2nd.eq}) are derived as gradient flows of the following {governing energy $ \mathcal{F}_{\gamma} = \mathcal{F}_{\gamma}(\eta, \theta) $}, called ``free-energy'':
\begin{align}\label{freeEgy}
    [\eta, \theta] \in H^1(\Omega) & \times BV(\Omega) \mapsto {\mathcal{F}_{\gamma}(\eta, \theta)} := \ds \frac{1}{2} \int_\Omega |\nabla \eta|^2 \, dx +\int_\Omega G(\eta) \, dx
    \nonumber
    \\
    & +\int_\Omega \alpha(\eta) |D \theta| +\int_{\Gamma} \alpha(\eta) |\theta -{\gamma}| \, d \Gamma.
\end{align}
In the context, $ 0 < \alpha \in C^2(\mathbb{R}) $ and $ 0 < \alpha_0 \in W_{\rm loc}^{1, \infty}(\mathbb{R}) $ are given functions corresponding to the mobilities of grain boundaries, and the superscript ``\,$'$\,'' denotes the differential of a function of one-variable. $ g \in C^1(\mathbb{R}) $ is a given function corresponding a perturbation for the orientation order, and $ G \in C^2(\mathbb{R}) $ is a nonnegative primitive of $ g $. {$ \gamma \in H^1(\Gamma) \cap L^\infty(\Gamma) $} is a given function corresponding to the Dirichlet boundary data for $ \theta $. Besides, $ d \Gamma $ denotes the area element on $ \Gamma $.

For parabolic systems kindred to (S), various mathematical methods have been developed in the literature. For instance, when both boundary conditions for $ \eta $ and $ \theta $ are of {zero-Neumann type}, the study of the corresponding system was treated in the recent works \cite{MR3268865,MR3670006,MR3082861,MR3462536,MR3038131,MR3362773,MR3203495,MR3238848}. Also, when $ \gamma \equiv 0 $, Ito--Kenmochi--Yamazaki \cite{MR2469586,MR2548486,MR2836555} and Kenmochi--Yamazaki \cite{MR2668289} studied relaxed versions of (S) that include an additional term $ -{\mit \Delta} \theta $ of parabolic regularization in the initial-boundary problem (\ref{2nd.eq}). {However, in the original (non-relaxed) problem \eqref{2nd.eq}, we first need to address a problem of ambiguity in the Dirichlet boundary condition. Actually, in rigorous mathematics, the condition $ \theta = \gamma $ on $ \Sigma $, as in \eqref{2nd.eq}, does not make sense, because the singular diffusion $ -\mathrm{div} \bigl( \alpha(\eta) \frac{D \theta}{|D \theta|} \bigr) $ generally allows spatial discontinuity for the solution $ \theta $, including the jumps between the trace of $ \theta $ and the boundary data $ \gamma $. For the problem of ambiguity, we adopt the method to define a weak formulation of the Dirichlet boundary condition in the singular problem \eqref{2nd.eq}, by means of the subdifferential of the $ L^1 $-integral term $ \int_\Gamma \alpha(\eta) |\theta -\gamma| \, d \Gamma $, as in \eqref{freeEgy}. The idea of the weak formulation is based on the general theory, established in \cite{MR1814993,MR3288271,MR2139257}.

In the case that $ \gamma $ is not constant, the nonhomogeneous boundary data will bring various equilibrium patterns of grain boundaries, as solutions to the steady-state problem for our system (S). Therefore, it could be expected that the structural observations for steady-state solutions would provide fruitful information for more advanced researches, such as the stability analysis of polycrystalline structure, the determination of possible (ideal) target profiles in the control problems of grain boundary motions, and so on.
\medskip

In view of these, we set the objective of this paper to deal with the following issues:
\begin{description}
        \hypertarget{sharp1}{}
    \item[{\boldmath issue\,$\sharp1$)}]the mathematical solvability of the system (S) with nonhomogeneous Dirichlet boundary data;
        \hypertarget{sharp2}{}
    \item[{\boldmath issue\,$\sharp2$)}]the derivation of the steady-state problem as an asymptotic governing system for the large-time behavior;
        \hypertarget{sharp3}{}
    \item[{\boldmath issue\,$\sharp3$)}]the structural observation for steady-state solutions.
\end{description}
Besides, we largely divide the content of this paper into two parts:

The former part deals with issues\,\hyperlink{sharp1}{$\sharp1$}) and \,\hyperlink{sharp2}{$\sharp2$}), and the conclusions are the main results of this paper, in short:
\hypertarget{mainTh}{}
\begin{description}
        \hypertarget{mainTh01}{}
    \item[(Main Theorem 1)]the existence of time-global solutions to the system (S) with the dissipation in time of the free-energy;
        \hypertarget{mainTh02}{}
    \item[(Main Theorem 2)]the semicontinuous association between the $ \omega $-limit set of the solution, as time tending to $ \infty $, and the steady-state problem for our system (S);
\end{description}
and some auxiliary results, in short:
\hypertarget{thms}{}
\begin{description}
    \item[(Theorem 1)]the solvability and energy-dissipation property for the approximating systems of time--discretization  type;
    \item[(Theorem 2)]the energy-inequality for a priori estimates of approximating solutions.
\end{description}
The precise statement of the above results is contained in Sections \ref{sec:main} (\hyperlink{mainTh}{Main Theorems}) and \ref{sec:approx} (\hyperlink{thms}{Theorems}), and proved in Sections 5 and 6, respectively, on the basis of the preliminaries prepared in Section 1, and some auxiliary Lemmas obtained in Section 4.
\medskip

The latter part is assigned to the last Section 7, and is devoted to issue\,\hyperlink{sharp3}{$\sharp3$}). Under the following concrete setting:
\begin{equation}\label{concrete-setting}
    \begin{cases}
        \ds \alpha_0(\eta) = \alpha(\eta) = \frac{\eta^2}{2} +\delta_0, \mbox{ with a constant $ \delta_0 \geq 0 $,}
        \\
        \ds g(\eta) = \eta -1, \mbox{ with } G(\eta) = \frac{1}{2}(\eta -1)^2.
    \end{cases}
\end{equation}
the results of structural observations will be reported as conclusions of this study. They concern with:
\begin{description}
        \hypertarget{sharp3-a}{}
    \item[{\boldmath ~~$\sharp3$-a)}]the exact profiles and SBV-regularity of one-dimensional steady-state solutions;
        \hypertarget{sharp3-b}{}
    \item[{\boldmath ~~$\sharp3$-b)}]the sufficient conditions for the existence of radially symmetric two-dimensional steady-state solutions;
\end{description}

}

\section{Preliminaries}
\ \ \vspace{-4ex}

We begin with some notations used throughout this paper.
\medskip

\noindent
\underline{\textit{Basis notations.}}
For arbitrary $ a_0, b_0 \in [-\infty, \infty] $, we define
\begin{equation*}
a_0 \vee b_0 := \max \{ a_0, b_0\} \mbox{ \ and \ } a_0 \wedge b_0 := \min \{ a_0, b_0\},
\end{equation*}
and for arbitrary $ -\infty \leq a \leq b \leq \infty $, we define the truncation function (operator) $ \mathcal{T}_a^b : \R \rightarrow [a, b] $ by letting
\begin{equation*}
r \in \R \mapsto \mathcal{T}_a^b r := a \vee (b \wedge r) \in [a, b],
\end{equation*}
and in particular, we set
\begin{equation*}
[{}\cdot{}]^+ := \mathcal{T}_0^\infty  \mbox{ and } [{}\cdot{}]^- := -\mathcal{T}_{-\infty}^0, \mbox{ on $ \R $.}
\end{equation*}

We denote by $ |x| $ and $ x \cdot y $ the Euclidean norm of $ x \in \mathbb{R}^N $ and the standard scalar product of $ x, y \in \R^{N} $, respectively, as usual, i.e.,
\begin{equation*}
\begin{array}{c}
| x | := \sqrt{x_1^2 +\cdots +x_N^2} \mbox{ \ and \ } x \cdot y  := x_1 y_1 +\cdots +x_N y_N
\\[1ex]
\mbox{ for all $ x = [x_1, \ldots, x_N], ~ y = [y_1, \ldots, y_N] \in \mathbb{R}^N $.}
\end{array}
\end{equation*}
\medskip

\noindent
\underline{\textit{Abstract notations. (cf. \cite[Chapter II]{MR0348562})}}
For an abstract Banach space $X$, we denote by $|\cdot|_{X}$ the norm of $X$, and by $ \langle \cdot, \cdot \rangle_X $ the duality pairing between $ X $ and its dual $ X^* $. In particular, when $X$ is a Hilbert space, we denote by $(\,\cdot\,,\cdot\,)_{X}$ the inner product of $X$.
Besides, for two Banach spaces $ X $ and $ Y $, let $  \mathscr{L}(X; Y)$ be the space of bounded linear operators from $ X $ into $ Y $.
\medskip

For any proper functional $ \Psi : X \rightarrow(-\infty, \infty] $ on a Banach space $ X $, we denote by $ D(\Psi) $ the effective domain of $ \Psi $.
\medskip

For any proper lower semi-continuous (l.s.c., in short) and convex function $\Phi$ defined on a Hilbert space $H$, we denote by $\partial \Phi$ the subdifferential of $\Phi$. The subdifferential $\partial \Phi$ corresponds to a weak differential of $\Phi$, and it is a maximal monotone graph in the product space $ H^2 $ $ (= H \times H) $. More precisely, for each $v_{0} \in H$, the value $\partial \Phi(v_{0})$ of the subdifferential at $v_{0}$ is defined as a set of all elements $v_{0}^{*} \in H$ which satisfy the following variational inequality:
\begin{equation*}
(v_{0}^{*}, v-v_{0})_{H} \le \Phi(v) - \Phi(v_{0})\ \ \mbox{for any}\ v \in D(\Phi).
\end{equation*}
The set $D(\partial \Phi) := \{ z \in H \ |\ \partial\Phi(z) \neq \emptyset\}$ is called the domain of $\partial\Phi$. We often use the notation ``$ [v_{0},v_{0}^{*}] \in \partial\Phi_{0}$ in $ H^2 $\,'', to mean that $`` v_{0}^{*} \in \partial\Phi(v_{0})$ in $H$ for $v_{0} \in D(\partial\Phi)"$, by identifying the operator $\partial\Phi$ with its graph in $H^2$.
\bigskip

\noindent
\underline{\textit{Notion of  $ {\mit \Gamma} $-convergence. (cf. \cite{MR1201152})}}
Let $ X $ be a reflexive Banach space. We say that a net $ \{ \Xi_\nu  \}_{\nu>0}$ of proper functionals $ \Xi_\nu  : X \rightarrow(-\infty, \infty] $, $ \nu > 0 $ (resp. a sequence $ \{ \Xi_n  \}_{n = 1}^\infty $ of proper functionals $ \Xi_n  : X \rightarrow(-\infty, \infty] $, $ n \in \N $), $\Gamma$-converges to a proper functional $ \Xi_0 : X \rightarrow(-\infty, \infty] $ as
$ \nu \downarrow 0 $ (resp. as $n\to+\infty$), iff. the following two conditions hold.
\hypertarget{Mosco}{}
\begin{description}
\item[\textmd{\it 1$^\circ$ Lower bound:}]$ \displaystyle \liminf_{\nu \downarrow 0} \Xi_\nu(v_\nu) \geq \Xi_0(v_0) $, if $ v_0 \in X $, $ \{ v_\nu \}_{\nu > 0} \subset X $, and $ v_\nu\to v_0 $ in $ X $ as $ \nu \downarrow 0 $ (resp. replacing ``$\nu$'' by ``$n$'', and ``$\nu \downarrow 0$'' by ``$n\to \infty$'');
\item[\textmd{\it 2${}^{\,\circ}$ Optimality:}]for any $ v_0 \in D(\Xi_0) $, there exists a net $ \{ v_\nu \}_{\nu > 0} \subset X  $ (resp. a sequence $\{ v_n \}_{n=1}^\infty \subset X$) , such that $ v_\nu \to v_0 $ and $ \Xi_\nu(v_\nu) \to \Xi_0(v_0) $, $ \nu \downarrow 0 $
(resp. replacing ``$\nu$'' by ``$n$'', and ``$\nu \downarrow 0$'' by ``$n\to \infty$'').
\end{description}

\noindent
\underline{\textit{Notations of basic elliptic operators.}}
Let $F\ :\ H^{1}(\Omega) \longrightarrow H^{1}(\Omega)^{\ast}$ be the duality mapping, defined as
\begin{equation}
\langle Fw, v \rangle_{H^1(\Omega)} := (w,v)_{H^{1}(\Omega)} = (w,v)_{L^{2}(\Omega)} + (\nabla w, \nabla v)_{L^{2}(\Omega)^{N}}, \mbox{ for all $v,w \in H^{1}(\Omega)$.}
\end{equation}
As is well-known, $Fu = - \laplace_{\rm N}u + u$ in $L^{2}(\Omega)$, if $u \in H^{1}(\Omega)$ belongs to the domain
\begin{equation*}
D_{\rm N} := \{ v \in H^{2}(\Omega)\ |\ \nabla v \cdot \nu_{\partial\Omega} = 0 \mbox{ in } L^{2}(\partial\Omega)\},
\end{equation*}
of the Laplacian operator
\begin{equation*}
\laplace_{\rm N}\ : \ u \in D_{\rm N} \subset L^{2}(\Omega) \mapsto \laplace u \in L^{2}(\Omega),
\end{equation*}
subject to the Neumann-zero boundary condition.

\noindent
\underline{\textit{Notations in basic measure theory. (cf. \cite{MR1857292, MR3288271})}} We denote by $\mathcal{L}^{N}$ the $ N $-dimensional Le--besgue measure, and we denote by $ \mathcal{H}^{N-1} $ the $ (N-1) $-dimensional Hausdorff measure.  In particular, the measure theoretical phrases, such as ``a.e.'', ``$dt$'' and ``$dx$'', and so on, are all with respect to the Lebesgue measure in each corresponding dimension. Also, in the observations on a Lipschitz surface $ S $, the phrase ``a.e.'' is with respect to the Hausdorff measure in each corresponding Hausdorff dimension.

For a measurable function $ u : B \rightarrow[-\infty, \infty] $ on a Borel set $ B \subset \R^N $, we denote by $ [u]^+ $ and $ [u]^- $ the positive part $ \mathcal{T}_0^\infty \circ u : B \rightarrow[0, \infty] $ and the negative part $ -\mathcal{T}_{-\infty}^0 \circ u : B \rightarrow[0, \infty] $, respectively.

Let $ U \subset \mathbb{R}^N $ be any open set. We denote by $ \mathscr{M}(U) $ (resp. $ \mathscr{M}_{\rm loc}(U) $) the space of all finite Radon measures (resp. the space of all Radon measures) on $ U $. Recall that the space $ \mathscr{M}(U) $ (resp. $ \mathscr{M}_{\rm loc}(U) $) is the dual of the Banach space $ C_0(U) $ (resp. dual of the locally convex space $ C_{\rm c}(U) $), for any open set $ U \subset \mathbb{R}^N $.

We set:
\begin{equation*}
\bB :=  \left\{ \begin{array}{l|l} \xi \in \R^N & |\xi| < 1 \end{array} \right\} \mbox{ and }
\bS^{N -1} := \partial \bB,
\end{equation*}
and for any $ \nu  \in \bS^{N -1} $, we set:
\begin{equation*}
\bB(\nu) :=  \left\{ \begin{array}{l|l} \xi \in \bB & \xi \cdot \nu > 0 \end{array} \right\}.
\end{equation*}

For a function $ u \in L_{\rm loc}^1(U) $ and a point $ x_0 \in U $, we denote by
\begin{equation*}
\mbox{ap\,-\hspace{-0.5ex}}\lim_{x \to x_0} u(x) = [\tilde{u}](x_0),
\end{equation*}
iff. there exists a value $ [\tilde{u}](x_0) \in \R $ such that:
\begin{equation*}
\lim_{r \downarrow 0} \frac{\mathcal{L}^N \left( \left\{ \begin{array}{l|l}
x \in r\bB +x_0 & |u(x) -[\tilde{u}](x_0)| > \sigma
\end{array}  \right\} \right)}{\mathcal{L}^d\left( r \bB  +x_0 \right)} = 0, \mbox{ \ for any $ \sigma > 0 $.}
\end{equation*}
The above $ [\tilde{u}](x_0) \in \R^N $ is called the {\em approximate limit} of the function $ u \in L_{\rm loc}^1(U) $ at the point $ x_0 \in U $, and the real value $ [\tilde{u}](x_0) $ exists, iff. for any $ \nu \in \bS^{N -1} $, a real value $ [\tilde{u}]_\nu(x_0)   \in \R $ such that:
\begin{equation*}
\lim_{r \downarrow 0} \frac{\mathcal{L}^N \left( \left\{ \begin{array}{l|l}
x \in r\bB(\nu) +x_0 & |u(x) -[\tilde{u}]_\nu(x_0)| > \sigma
\end{array}  \right\} \right)}{\mathcal{L}^N\left( r \bB(\nu)  +x_0 \right)} = 0, \mbox{ for any $ \sigma > 0 $,}
\end{equation*}
exists, and it coincides with the value $ [\tilde{u}]_{-\nu}(x_0) \in \R $, i.e.
\begin{equation*}
\lim_{r \downarrow 0} \frac{\mathcal{L}^N \left( \left\{ \begin{array}{l|l}
x \in r\bB(-\nu) +x_0 & |u(x) -[\tilde{u}]_{\nu}(x_0)| > \sigma
\end{array}  \right\} \right)}{\mathcal{L}^d\left( r \bB(-\nu)  +x_0 \right)} = 0, \mbox{ for any $ \sigma > 0 $.}
\end{equation*}
As is well-known (cf. \cite[Section 10.3]{MR3288271}), any function $ u \in L_{\rm loc}^1(U) $ admits an approximate limit $ [\tilde{u}](x) $ at a.e. $ x \in U $, and in this regard, the function $ x \in U \mapsto [\tilde{u}](x) $ is called the {\em continuous representative} of $ u $. The measurable set $ J_u \subset U $ defined as:
\begin{equation}\label{J_u}
J_u := \left\{ \begin{array}{l|l}
{x} \in U & \parbox{10cm}{there exists $ \nu_{u}^{(x)} \in \bS^{N -1} $ such that $ [\tilde{u}]_{\nu_u^{(x)}}({x}) > [\tilde{u}]_{-\nu_u^{(x)}}({x}) $}
\end{array} \right\},
\end{equation}
is called the {\em jump set} of $ u $. Additionally, the unit vector $ \nu_u^{(x)} \in \bS^{N -1} $ as in \eqref{J_u} is uniquely determined for each $ x \in J_u $, and the mapping $ x \in J_u \mapsto \nu_u(x) := \nu_u^{(x)} \in \bS^{N -1} $ forms a measurable function. Similarly, we denote by
\begin{equation*}
\mbox{ap\,-}\limsup_{x \to x_0} u(x) = [\widetilde{u_+}](x_0) ~ \bigl( \mbox{resp. } \mbox{ap\,-}\liminf_{x \to x_0} u(x) = [\widetilde{u_-}](x_0) \bigr),
\end{equation*}
iff. there exists a real value $ [\widetilde{u_+}](x_0) \in \R $ (resp. $ [\widetilde{u_-}](x_0) \in \R $) satisfying:
\begin{align*}
& [\widetilde{u_+}](x_0) = \inf \left\{ \begin{array}{l|l}
\tilde{\sigma} \in \R & \displaystyle
\lim_{r \downarrow 0} \frac{\mathcal{L}^N \left( \left\{ \begin{array}{l|l}
x \in r \bB +x_0 & u(x) > \tilde{\sigma}
\end{array}  \right\} \right)}{\mathcal{L}^N \left( r \bB +x_0 \right)} = 0
\end{array} \right\},
\\
\left( \rule{0pt}{18pt} \mbox{resp. } \right. & [\widetilde{u_-}](x_0) = \sup \left\{ \begin{array}{l|l}
\tilde{\sigma} \in \R & \displaystyle
\lim_{r \downarrow 0} \frac{\mathcal{L}^N \left( \left\{ \begin{array}{l|l}
x \in r \bB +x_0 & u(x) < \tilde{\sigma}
\end{array}  \right\} \right)}{\mathcal{L}^N\left( r \bB +x_0 \right)} = 0
\end{array} \right\} \left. \rule{0pt}{18pt} \right).
\end{align*}
Besides, we set
$$
S_u := \left\{ \begin{array}{l|l}
x \in U &
[\widetilde{u_+}](x) \ne [\widetilde{u_-}](x), ~ \mbox{i.e. $ [\widetilde{u_+}](x) > [\widetilde{u_-}](x) $}
\end{array} \right\}.
$$

\noindent
\underline{\textit{Notations in BV-theory. (cf. \cite{MR1857292, MR3288271, MR3409135, MR0775682})}} Let $U\subset \R^N$ be an open set. A function $ u \in L^1(U) $ (resp. $ u \in L_{\rm loc}^1(U) $)  is called a function of bounded variation, or a BV-function, (resp. a function of locally bounded variation or a BV$\empty_{\rm loc}$-function) on $ U $, iff. its distributional differential $ D u $ is a finite Radon measure on $ U $ (resp. a Radon measure on $ U $), namely $ D u \in \mathscr{M}(U) $ (resp. $ D u \in \mathscr{M}_{\rm loc}(U) $).
We denote by $ BV(U) $ (resp. $ BV_{\rm loc}(U) $) the space of all BV-functions (resp. all BV$\empty_{\rm loc}$-functions) on $ U $. For any $ u \in BV(U) $, the Radon measure $ D u $ is called the variation measure of $ u $, and its  total variation $ |Du| $ is called the total variation measure of $ u $. Additionally, the value $|Du|(U)$, for any $u \in BV(U)$, can be calculated as follows:
\begin{equation*}
|Du|(U) = \sup \left\{ \begin{array}{l|l}
\ds \int_{U} u \ {\rm div} \,\varphi \, dy & \varphi \in C_{\rm c}^{1}(U)^N \ \ \mbox{and}\ \ |\varphi| \le 1\ \mbox{on}\ U
\end{array}
\right\}.
\end{equation*}
The space $BV(U)$ is a Banach space, endowed with the following norm:
\begin{equation*}
|u|_{BV(U)} := |u|_{L^{1}(U)} + |D u|(U),\ \ \mbox{for any}\ u\in BV(U).
\end{equation*}
Also, $ BV(U) $ is a complete metric space, endowed with the following distance:
$$
[u, v] \in BV(U)^2 \mapsto |u -v|_{L^1(U)} +\left| \int_U |Du| -\int_U |Dv| \right|.
$$
The topology provided by this distance is called the \em strict topology \em of $ BV(U) $.

For any $ u \in BV(U) $, it is known that (cf. \cite[Sections 10.3 and 10.4]{MR3288271}), the sets $ S_u $ and $ J_u $ are $ \mathcal{H}^{N -1} $-measurable, $ \mathcal{H}^{N -1}(S_u \cup J_u) < \infty $, $ \mathcal{H}^{N -1}((S_u \cup J_u) \setminus (S_u \cap J_u)) = 0 $, and
\begin{equation*}
[\widetilde{u_+}](x) = [\tilde{u}]_{\nu_u(x)}(x) \mbox{ and } [\widetilde{u_-}](x) = [\tilde{u}]_{-\nu_u(x)}(x), \mbox{ for $ \mathcal{H}^{d -1} $-a.e. $ x \in S_u \cap J_u $.}
\end{equation*}

Now, by Radon-Nikod\'{y}m's theorem \cite[Theorem 1.28]{MR1857292}, the measure $ Du $ for $ u \in BV(U) $ is decomposed in the absolutely continuous part $ D^a u $ for $ \mathcal{L}^d $ and the singular part $ D^s u $, i.e.
$$
Du = D^{a}u +D^{s}u \mbox{ \ in $ \mathscr{M}(U) $.}
$$
Furthermore, (cf. \cite{MR3288271,MR1857292}), this decomposition is precisely expressed as follows:
$$
D^{a} u = \nabla u \, \mathcal{L}^N \, \lfloor \, (U \setminus S_u) \mbox{ and } D^s u = ([\widetilde{u_+}] -[\widetilde{u_-}]) \nu_u \mathcal{H}^{N -1} \, \lfloor \, S_u +D^{s} u \, \lfloor \, (U \setminus S_u)
$$
In this context, $ \nabla u \in L^1(U) $ denotes the Radon-Nikod\'{y}m density of $ D^a u  $ for $ \mathcal{L}^d $. The part $ ([\widetilde{u_+}] -[\widetilde{u_-}]) \nu_u \mathcal{H}^{d -1} \, \lfloor \, S_u $ is called the \em jump part \em of $ Du $, and it provides an exact expression of the variation at the discontinuities of $ u $. The part $ D^s u \, \lfloor \, (U \setminus S_u) $ is called the \em Cantor part \em of $ Du $ and it is denoted by $D^c u$. We denote by SBV the space of special functions of bounded variation; i.e. those BV functions such that $D^c u=0$. We note that a function defined as
$$
x \in U \mapsto [u]^*(x) := \left\{ \begin{array}{ll}
[\tilde{u}](x), & \mbox{if $ x \in U \setminus S_u $,}
\\[1ex]
\ds \frac{[\widetilde{u_+}](x) +[\widetilde{u_-}](x)}{2}, & \mbox{if $ x \in J_u $,}
\\[2ex]
0, & \mbox{if $ x \in S_u \setminus J_u $,}
\end{array} \right.
$$
provides a \em precise representative \em of $ u \in BV(U) $, $ \mathcal{H}^{N -1} $-a.e. on $ U $.

In particular, if $ U $ is bounded and the boundary  $\partial U$ is Lipschitz, then the space $BV(U)$ is continuously embedded into $L^{N/(N -1)}(U)$ and compactly embedded into $L^{q}(U)$ for any $1 \le q < N/(N-1)$ (cf. \cite[Corollary 3.49]{MR1857292} or \cite[Theorems 10.1.3 and 10.1.4]{MR3288271}). Also, there exists a bounded linear operator $ \mathfrak{tr}_{{\partial U}} : BV(U) \mapsto L^1(\partial U) $, called the \em trace operator, \em such that $ \mathfrak{tr}_{{\partial U}} \varphi = \varphi|_{\partial U} $ on $ \partial U $ for any $ \varphi \in C^1(\overline{U}) $. The value of trace $ \mathfrak{tr}_{{\partial U}} u \in L^1(\partial U) $ is often abbreviated by $ u $, by identifying $ \mathfrak{tr}_{{\partial U}} u $ as the extension of $ u \in BV(U) $ onto $ \partial U $.
Additionally, if $1 \le r < \infty$, then the space $C^{\infty}(\overline{U})$ is dense in $BV(U) \cap L^{r}(U)$ for the {\em intermediate convergence} (cf. \cite[Definition 10.1.3. and Theorem 10.1.2]{MR3288271}), i.e. for any $u \in BV(U) \cap L^{r}(U)$, there exists a sequence $\{u_{n} \}_{n = 1}^\infty \subset C^{\infty}(\overline{U})$ such that $u_{n} \to u$ in $L^{r}(U)$ and $\int_{U}|\nabla u_{n}|dx \to |Du|(U)$ as $n \to \infty$.

\begin{rem}\label{Rem.trace}
\textbf{(cf. \cite[Theorem 3.88]{MR1857292})} Let $ U \subset \R^N $ be a bounded open set with Lipschitz boundary $ \partial U $. Then it is known that:
\begin{equation*}
\int_U u \, {\rm div} \, \psi \, dx = \int_U \psi \cdot Du -\int_{\partial U} u \, (\psi \cdot n_{\Gamma}) \, d \mathcal{H}^{N -1}, \mbox{ for any $ \psi \in C^1(\R^N)^N $.}
\end{equation*}
Moreover, the trace operator $ \mathfrak{tr}_{{\partial U}} : BV(U) \mapsto L^1(\partial U) $ is continuous with respect to the strict topology of $ BV(U) $. Namely,  $ \mathfrak{tr}_{{\partial U}} u_n \to \mathfrak{tr}_{{\partial U}} u $ in $ L^1(\partial U) $ as $ n \to \infty $, if $ u \in BV(U) $, $ \{ u_n \}_{n = 1}^\infty \subset BV(U) $ and $ u_n \to u $ in the strict topology of $ BV(U) $ as $ n \to \infty $.
\end{rem}

\noindent
\underline{\textit{Extensions of functions. (cf. \cite{MR1857292,MR3288271})}} Let $ \mu $ be a positive measure on $ \R^N $, and let $ B \subset \R^N $ be a $ \mu $-measurable Borel set. For a $ \mu $-measurable function $ u : B \rightarrow \R $, we denote by $ [u]^{\rm ex} $ an extension of $ u $ over $ \R^N $, i.e. $ [u]^{\rm ex} : \R^d \rightarrow \R $ is a measurable function such that $ [u]^{\rm ex} = u $ $ \mu $-a.e. in $ B $. In general, the choices of extensions are not necessarily unique.

\begin{rem}\label{Rem.ext} Let $ U \subset \R^N $ be an open set, with compact $C^1$-boundary (in case $N>1$) $\partial U$. Then,
\begin{description}
    \hypertarget{Fact0}{}
\item[(Fact\,0)](cf. \cite[Proposition 3.21]{MR1857292})There exists a bounded linear operator $ \mathfrak{ex}_{U} : BV(U) \rightarrow BV(\R^N) $, such that:
\begin{itemize}
\item[--] $ \mathfrak{ex}_{U} $ maps any function $ u \in BV(U) $ to an extension $ [u]^{\rm ex} \in BV(\R^N) $;
\item[--] for any $ 1 < q < \infty $,  $ \mathfrak{ex}_{U} ({W^{1, q}(U)}) \subset W^{1, q}(\R^N) $, and the restriction $ \mathfrak{ex}_{U}  |_{W^{1, q}(U)} : W^{1, q}(U) \rightarrow W^{1, q}(\R^N) $ forms a bounded and linear operator.
\end{itemize}
\hypertarget{Fact1}{}
\item[(Fact\,1)](cf. \cite[{[1.II] Teorema}]{MR0102739}) There exists a bounded linear operator $ \mathfrak{ex}_{{\partial U}}^1 : L^{1}(\partial U) \rightarrow W^{1, 1}(\R^N) $, which maps any function $ \varrho \in L^1(\partial U) $ to an extension $ [\varrho]^{\rm ex} \in W^{1, 1}(\R^N) \subset BV(\R^N) $.
\end{description}
\end{rem}

{
\begin{rem}[Harmonic extension]\label{Rem.ex_hm}
    Under the same notations as in Remark \ref{Rem.ext}, we denote by $ \mathfrak{hm}_{\partial U} $ the operator of harmonic extension $ \mathfrak{hm}_{{\partial U}} : H^{\frac{1}{2}}(\partial U) \to H^1({U}) $, which maps any $ \varrho \in H^{\frac{1}{2}}(\partial U) $ to a (unique) minimizer $ [\varrho]^{\rm hm} \in H^1(U) $ of the following proper l.s.c. and convex function on $ L^2(U) $:
\begin{equation*}
    z \in {L^2(U)} \mapsto \left\{ \begin{array}{ll}
\multicolumn{2}{l}{\ds \frac{1}{2} \int_U |\nabla z|^2 \, dx, \mbox{ if $ z \in H^1(U) $ and $ z = \varrho $ in $ H^{\frac{1}{2}}(\partial U) $,}}
\\[2ex]
\infty, & \mbox{otherwise.}
\end{array} \right.
\end{equation*}
    Indeed, by the general theories of linear PDEs (cf. \cite[Chapter 3]{MR0241822}, \cite[Chapter 8]{MR0244627} and \cite[Theorem 8.3 in Chapter 2]{MR0350177}), we can check that $ \mathfrak{hm}_{{\partial U}} H^{\frac{1}{2}}(\partial U) \subset H_\mathrm{loc}^2(U) $, and:
\begin{equation}\label{harmonic01}
    (\mathit{\Delta} [\varrho]^{\rm hm}, \varphi)_{L^2(U)} = (\nabla [\varrho]^{\rm hm}, \nabla \varphi)_{L^2(U)^N} = 0, \mbox{ for all $ \varrho \in H^{\frac{1}{2}}(\Gamma) $ and {$ \varphi \in C_\mathrm{c}^1(U) $},}
\end{equation}
    i.e. the extension $ [\varrho]^{\rm hm} $ solves the harmonic equation $ \mathit{\Delta} [\varrho]^{\rm hm} = 0 $ on $ U $, having the Dirichlet boundary data $ \varrho \in {H^{\frac{1}{2}}(\partial U)} $. Additionally, the  variational form \eqref{harmonic01} implies that:
\begin{description}
        \hypertarget{ex0}{}
    \item[\textmd{(ex.0)}]the operator $ \mathfrak{hm}_\Gamma : H^{\frac{1}{2}}(\partial U) \to H^1(U) $ is a bounded linear isomorphism, and in particular, there exists a positive constant $ C_{U}^{\rm hm} $ satisfying
\begin{equation}\label{hm_const}
    |[\varrho]^{\rm hm}|_{H^1(U)} \leq C_{U}^{\rm hm} |\varrho|_{H^{\frac{1}{2}}(\partial U)}, \mbox{ for any $ \varrho \in H^{\frac{1}{2}}(\Gamma) $;}
\end{equation}
        \hypertarget{ex1}{}
\item[\textmd{(ex.1)}]if $ r \in \R $, and $ r \leq \varrho \in H^{\frac{1}{2}}(\partial U) $ (resp. $ r \geq \varrho  \in H^{\frac{1}{2}}(\partial U) $), then $ r \leq [\varrho]^{\rm hm} \in H^1(U) $ (resp. $ r \geq [\varrho]^{\rm hm} \in H^1(U) $).
\end{description}
Moreover, by using the composition $ \mathfrak{hm}_{{\partial (\overline{U}^{\rm C})}} \circ \mathfrak{tr}_{\partial U} : H^{1}(U) \to H^1(\overline{U}^{\rm C}) $, we may suppose that:
\begin{description}
        \hypertarget{ex2}{}
    \item[\textmd{(ex.2)}]if $ r \in \R $, and $ r \leq u \in H^{1}(U) $ (resp. $ \sigma \geq r \in  H^{1}(U) $), then $ r \leq [u]^{\rm ex} \in H^{\frac{1}{2}}(\R^N) $ (resp. $ r \geq [u]^{\rm ex} \in H^{\frac{1}{2}}(\R^N) $);
\end{description}
\end{rem}
}

\noindent
\underline{\textit{Specific notations. (cf. \cite{MR2306643,MR2139257,MR3268865})}}
Throughout this paper, $ \Omega \subset \R^N $ is a bounded domain with $ \Gamma $ being its boundary. We assume sufficient smoothness {(at least $ C^1 $-regularity)} for $ \Gamma $ when $ N > 1 $. Also, we fix a large open ball $ \bB_\Omega \subset \R^N $ such that $ \bB_\Omega \supset \overline{\Omega}$.

Next, for any $ u \in BV(\Omega) \cap L^2(\Omega) $ and any $ \gamma \in L^1(\Gamma) $, we define a function $ [u]_\gamma^{\rm ex} \in BV(\R^N) \cap W^{1, 1}(\R^N \setminus \overline{\Omega}) $, by letting
\begin{equation*}
x \in \R^N \mapsto [u]_\gamma^{\rm ex}(x) := \left\{ \begin{array}{l}
[u]^*(x), \mbox{ if $ x \in \Omega $,}
\\[1ex]
[\gamma]^{\rm ex}(x), \mbox{ if $ x \in \R^N \setminus \overline{\Omega} $,}
\end{array} \right.
\end{equation*}
For any $ \beta \in H^1(\Omega) \cap L^\infty(\Omega) $, any $ \gamma \in L^1(\Gamma) $ and any $ u \in BV(\Omega) \cap L^2(\Omega) $, we define a Radon measure $ [\beta |D u|]_\gamma \in \mathscr{M}_{\rm loc}(\R^N) $ by letting:
\begin{equation}\label{01_[bt|Du|]_gm}
\begin{array}{c}
\displaystyle
[\beta |Du|]_\gamma(B) := \int_{B \cap \Omega} [\beta]^* d |Du| +\int_{B \cap \Gamma} \beta |u -\gamma| \, d \mathcal{H}^{N -1} +\int_{B \setminus \overline{\Omega}} [\beta]^{\rm ex}|D [\gamma]^{\rm ex}| \, dx,
\\[3ex]
\mbox{for any bounded Borel set $ B \subset \R^N $.}
\end{array}
\end{equation}
Referring to the previous works, e.g. \cite[Theorem 6.1]{MR2306643}, we can reduce the measure $ [\beta|Du|]_\gamma $ in the following simple form:
\begin{equation}\label{02_[bt|Du|]_gm}
[\beta |Du|]_\gamma = [\beta]^{\rm ex} \, |D[u]_\gamma^{\rm ex}| \mbox{ \ in $ \mathscr{M}_{\rm loc}(\R^N) $.}
\end{equation}
In addition, as a consequence of \eqref{01_[bt|Du|]_gm} and Remark \ref{Rem.trace}, we can see the following facts.
\begin{description}
        \hypertarget{Fact2}{}
    \item[{(Fact\,2)}]If $ \{ u_n \}_{n = 1}^\infty \subset BV(\Omega) \cap L^2(\Omega) $, $ u \in BV(\Omega) \cap L^2(\Omega) $ and $ u_n \to u $ in $ L^2(\Omega) $ and strictly in $ BV(\Omega) $ as $ n \to \infty $, then $ [\beta|Du_n|]_\gamma(B) \to [\beta |Du|]_\gamma(B) $ as $ n \to \infty $, for any bounded Borel set $ B \subset \R^N $.
        \hypertarget{Fact3}{}
    \item[{(Fact\,3) (cf. \cite[Theorem 6.1]{MR2306643} and \cite[Section 2]{MR3268865})}]If $ A \subset \R^N $ is a bounded and open, and $ \beta \in H^1(\Omega) \cap L^\infty(\Omega) $ with $ \beta \geq 0 $ a.e. in $ \Omega $, then
\begin{equation}\label{exp01}
[\beta|Du|]_\gamma(A) = \sup \left\{ \begin{array}{l|l}
\ds \int_A u \, {\rm div} \, \varphi \, dx & \parbox{4.75cm}{
$ \varphi \in L^\infty(A)^N $, such that $ {\rm supp} \varphi $ is compact and $ |\varphi| \leq [\beta]^{\rm ex} $ a.e. in $ A $
}
\end{array} \right\},
\end{equation}
and in particular, if $ \log \beta \in L^\infty(\Omega) $, then
\begin{equation}\label{exp02}
[\beta|Du|]_\gamma(A) = \sup \left\{ \begin{array}{l|l}
\ds \int_A u \, {\rm div}(\beta \varphi) \, dx & \parbox{4.75cm}{
$ \varphi \in L^\infty(A)^N $, such that $ {\rm supp} \, \varphi $ is compact and $ |\varphi| \leq 1 $ a.e. in $ A $
}
\end{array} \right\}.
\end{equation}
\end{description}

Now, we set:
\begin{equation*}
\begin{array}{c}
\left\{ \begin{array}{l}
\ds X_{\rm c}(\Omega) := \left\{ \begin{array}{l|l}
\varphi \in L^{\infty}(\Omega)^{N} & {\rm div} \, \varphi \in L^{2}(\Omega) \mbox{ and supp $\varphi$ is compact in } \Omega
\end{array} \right\},
\\[1ex]
\ds W_{0}(\Omega) := \left\{ \begin{array}{l|l}
\beta \in H^{1}(\Omega) \cap L^{\infty}(\Omega) &
\parbox{2.75cm}{$ \beta \ge 0 $ a.e. in $ \Omega $}
\end{array} \right\},
\\[1ex]
\ds W_{\rm c}(\Omega) := \left\{ \begin{array}{l|l}
\beta \in H^{1}(\Omega) \cap L^{\infty}(\Omega) &
\parbox{2.75cm}{$ \log \beta \in L^\infty(\Omega) $}
\end{array} \right\}.
\end{array} \right.
\ \\[-2ex]
\end{array}
\end{equation*}

Besides, for any $ \beta \in W_0(\Omega) $, we define a functional {$\Phi_\gamma(\beta; {}\cdot{})$} on $L^{2}(\Omega)$, by letting
\begin{equation}\label{Phi_gm(bt.)}
    \ds u \in L^{2}(\Omega) \mapsto {\Phi_\gamma(\beta; u)} := \left\{ \begin{array}{ll}
\multicolumn{2}{l}{\ds \int_{\overline{\Omega}} d [\beta |Du|]_\gamma = \int_\Omega [\beta]^* \, d |Du| +\int_{\Gamma} \beta |u -\gamma| \, d\mathcal{H}^{N -1},}
\\[2ex]
& \mbox{if $ u \in BV(\Omega) $,}
\\[1ex]
\infty, & \mbox{otherwise.}
\end{array} \right.
\end{equation}
Note that the value of {$\Phi_\gamma(\beta; {}\cdot{})$} is determined, independently on the choice of the extensions $ [\beta]^{\rm ex} $ and $ [\gamma]^{\rm ex} $. Therefore, taking into account (\ref{01_[bt|Du|]_gm})--(\ref{Phi_gm(bt.)}), it is inferred that the functional $\Phi_\gamma(\beta; {}\cdot{})$ is proper l.s.c. and convex on $ L^2(\Omega) $.
In addition, the following holds:

\begin{description}
{
\hypertarget{Fact4}{}
\item[{(Fact\,4)}]\textbf{(cf. \cite{MR0102739,MR2139257,MR3751650})} If $ \beta \in W_{\rm c}(\Omega) $, $ \gamma \in H^{1}(\Gamma) $ and $ u \in BV(\Omega) \cap L^2(\Omega) $, then there exists a sequence $ \{ u_i \}_{i = 1}^\infty \subset H^1(\Omega) $ such that
\begin{equation}\label{reg01}
\left\{ \hspace{-4ex} \parbox{11.75cm}{
\vspace{-1ex}
\begin{itemize}
\item[]$ u_i = \gamma $, $ \mathcal{H}^{N -1} $-a.e. on $ \Gamma $, for any $ i \in \N $,
\item[]$ u_i \to u $ in $ L^2(\Omega) $ and $ \ds \int_\Omega \beta |\nabla u_i| \, dx \to {\Phi_\gamma(\beta; u)} $, as $ i \to \infty $.
\vspace{-1ex}
\end{itemize}
} \right.
\end{equation}
}
Hence, the functional $ {\Phi_\gamma(\beta;{}\cdot{})} $ coincides with the lower semi-continuous envelope of the functional:
$$
\varphi \in L^2(\Omega) \mapsto \left\{ \begin{array}{ll}
        \multicolumn{2}{l}{\ds \int_\Omega \beta |\nabla \varphi| \, dx, \mbox{ \ if {$ \varphi \in H^{1}(\Omega) $} and $ \varphi = \gamma $, $ \mathcal{H}^{N -1} $-a.e. on $ \Gamma $,}}
\\[2ex]
\infty, & \mbox{otherwise,}
\end{array} \right.
$$
i.e. for any $ v \in L^2(\Omega) $, it holds that
$$
{\Phi_\gamma(\beta; v)} = \inf \left\{ \begin{array}{l|l}
\ds \liminf_{i \to \infty} \int_\Omega \beta |\nabla \varphi_i| \, dx & \parbox{7cm}{
        {$ \{ \varphi_i \}_{i = 1}^\infty \subset H^{1}(\Omega) $,} $ \varphi_i = \gamma $, $ \mathcal{H}^{N -1} $-a.e. on $ \Gamma $, for any $ i \in \N $, and $ \varphi_i \to v $ in $ L^2(\Omega) $ as $ i \to \infty $
}
\end{array} \right\}.
$$
\end{description}

{
\begin{rem}\label{Rem.Fact4}
    In the construction of the sequence $ \{ u_i \}_{i = 1}^\infty \subset H^1(\Omega) $ as in \eqref{reg01}, the key point is to prepare an auxiliary sequence $ \{ u_i^\perp \}_{i = 1}^\infty \cap W^{1, 1}(\Omega) \cap L^2(\Omega) $ such that:
\begin{equation*}
\left\{ \hspace{-4ex} \parbox{12.75cm}{
\vspace{-1ex}
\begin{itemize}
\item[]$ u_i^\perp = \gamma $, $ \mathcal{H}^{N -1} $-a.e. on $ \Gamma $, for any $ i \in \N $,
\item[]$ u_i^\perp \to 0 $ in $ L^2(\Omega) $ and $ \ds \int_\Omega \beta |\nabla u_i^\perp| \, dx \to \int_\Gamma \beta|u| \, d \mathcal{H}^{N -1} $, as $ i \to \infty $,
\vspace{-1ex}
\end{itemize}
} \right.
\end{equation*}
    and such a sequence is easily obtained by referring to some appropriate general theories, e.g. \cite{MR0102739,MR2139257}. In addition, if we suppose $ \gamma \in H^1(\Gamma) $, then we can take the auxiliary sequence to satisfy $ \{ u_i^\perp \}_{i = 1}^\infty \subset H^1(\Omega) $ (cf. \cite[Lemma 2]{MR3751650}), and thereby, we can also suppose $ \{ u_i \}_{i = 1}^\infty \subset H^1(\Omega) $ for the sequence as in \eqref{reg01}.
\end{rem}
}
\section{Main Theorems}\label{sec:main}
\ \ \vspace{-4ex}

We begin with the assumptions for the Main Theorem.
\begin{enumerate}
    \hypertarget{A1}{}
    \item[\textmd{(A1)}]$ g \in W_{\rm loc}^{1, \infty}(\R) $, $ g(0) \leq 0 $ and $ g(1) \geq 1 $.
    \hypertarget{A2}{}
    \item[\textmd{(A2)}]$ \alpha_0 \in W_{\rm loc}^{1, \infty}(\R) $ and $ \alpha \in C^2(\R) $ are given functions (mobilities) such that $ \alpha $ is convex, $ \alpha'(0) = 0 $, and $ \delta_\alpha := \inf \alpha_0(\R) \cup \alpha(\R) > 0 $.
    {\hypertarget{A3}{}
    \item[\textmd{(A3)}]$ \gamma \in H^1(\Gamma) \cap L^\infty(\Gamma) $ is a fixed boundary datum, and the initial pair $ [\eta_0, \theta_0] \in L^2(\Omega) $ fulfills that:
\begin{equation*}
[\eta_0, \theta_0] \in D_0 := \left\{ \begin{array}{l|l}
[\tilde{\eta}, \tilde{\theta}] \in L^\infty(\Omega)^2 & \parbox{7cm}{
$ 0 \leq \tilde{\eta} \leq 1 $ and $ |\tilde{\theta}| \leq |\gamma|_{L^\infty(\Gamma)} $, a.e. in $ \Omega $
}
\end{array} \right\}.
\end{equation*}
}
\end{enumerate}
\begin{rem}[Possible choice for (\hyperlink{A1}{A1}) and (\hyperlink{A2}{A2})]
Referring to \cite{kobayashi2000continuum, MR1794359}, the setting in \eqref{concrete-setting} provides a possible choice of given functions that fulfills the above (\hyperlink{A1}{A1}) and (\hyperlink{A2}{A2}).
\end{rem}

Next, for simplicity of descriptions, we prescribe some additional notations.
\medskip

\noindent
\underline{\textit{Additional notations.}}
We define a functional $ \Psi_{0} : L^2(\Omega) \longrightarrow (-\infty, \infty] $, by letting:
\begin{equation*}
z \in L^2(\Omega) \mapsto \Psi_{0}(z) := \left\{ \begin{array}{ll}
\multicolumn{2}{l}{\ds \frac{1}{2} \int_\Omega |\nabla z|^2 \, dx +\int_\Omega G(z) \, dx, \mbox{ if $ z \in H^1(\Omega) $,}}
\\[2ex]
\infty, & \mbox{otherwise.}
\end{array} \right.
\end{equation*}
Then, the free-energy $ {\mathcal{F}_\gamma = \mathcal{F}_\gamma(\eta, \theta)} $ given in \eqref{freeEgy} can be described in the following simple formula:
\begin{align*}
    [\eta, \theta] \in & \times L^2(\Omega)^2 \mapsto \mathcal{F}_\gamma(\eta, \theta) := \Psi_0(\eta) +\Phi_\gamma(\alpha(\eta); \theta) \in (-\infty, \infty].
\end{align*}

Based on these, the solution to the system (S) is defined as follows.
\begin{defn}[Solutions to (S)]\label{Def.Sol}
A pair $ [\eta, \theta] \in L_{\rm loc}^2(0, T; L^2(\Omega))^2 $ is called a solution to (S), iff. the components $ \eta = \eta(t, x) $ and $ \theta = \theta(t, x) $ fulfill the following properties.
\begin{description}
    \hypertarget{S0}{}
    \item[\textmd{(S0)}]$ \eta \in C([0, \infty); L^2(\Omega)) \cap W_{\rm loc}^{1, 2}((0, \infty); L^2(\Omega)) \cap L_\mathrm{loc}^2([0, \infty); H^1(\Omega)) \cap L_{\rm loc}^\infty((0, \infty); H^1(\Omega)) $,
$ \theta \in C([0, \infty); L^2(\Omega)) \cap W_{\rm loc}^{1, 2}((0, \infty); L^2(\Omega)) $, $ |D \theta({}\cdot{})|(\Omega) \in L_\mathrm{loc}^1(0, \infty) \cap L_{\rm loc}^\infty((0, \infty)) $, $ [\eta(t), \theta(t)] \in D_0 $ for any $  t \geq 0 $, and $ [\eta(0), \theta(0)] = [\eta_0, \theta_0] $ in $ L^2(\Omega)^2 $.
\hypertarget{S1}{}
\item[\textmd{(S1)}]$ \eta $ solves the following variational identity:
\begin{align*}
    \ds \int_\Omega \bigl( \eta_t(t) +g(\eta(t)) & \bigr) w \, dx +\int_\Omega \nabla \eta(t) \cdot \nabla w \, dx {+\int_{\overline{\Omega}} d \bigl[  w \alpha'(\eta(t)) |D\theta(t)| \bigr]_{\gamma}} = 0,
\\
& \mbox{for any $ w \in H^1(\Omega) \cap L^\infty(\Omega) $ and a.e. $ t > 0 $.}
\end{align*}
\hypertarget{S2}{}
\item[\textmd{(S2)}]$ \theta $ solves the following variational inequality:
\begin{align*}
    \ds \int_\Omega \alpha(\eta(t)) \theta_t(t) (\theta(t) & -v) \, dx {+\int_{\overline{\Omega}} d \bigl[\alpha(\eta(t)) |D\theta(t)| \bigr]_{\gamma}}
    \ds \leq {\int_{\overline{\Omega}} d \bigl[\alpha(\eta(t)) |Dv| \bigr]_{\gamma}},
\\
& \mbox{for any $ v \in BV(\Omega) \cap L^2(\Omega) $ and a.e. $ t > 0 $.}
\end{align*}
\end{description}
\end{defn}
\begin{rem}\label{Rem.Sol01}
    In the light of \eqref{01_[bt|Du|]_gm}, the variational identity in Definition \ref{Def.Sol} (\hyperlink{S1}{S1}), can be rewritten into the following weak formulation:
\begin{equation*}
    \eta_{t}(t) + F\eta(t) - \eta(t) + g(\eta(t)) {+[\alpha'(\eta(t))|D \theta(t)|]_{\gamma}} = 0 \mbox{ in } H^{s}(\Omega)^*, \mbox{ a.e. } t > 0,
\end{equation*}
where $ F : H^1(\Omega) \to H^1(\Omega)^* $ is the duality map between $ H^1(\Omega) $ and $ H^1(\Omega)^* $, and $s > N/2$ is a large constant such that the embedding $ H^s(\Omega) \subset C(\overline{\Omega}) $ holds true. Thus, we note that the inhomogeneous boundary condition in \eqref{1st.eq} is implicitly built in the expressions of the $ H^s(\Omega)^* $-valued function $ {[\alpha'(\eta(\cdot)) |D \theta(\cdot)|]_{\gamma}} \in L^2(0, T; H^s(\Omega)^*) $, and specifically, $ {[\alpha'(\eta(t)) |D \theta(t)|]_{\gamma}} \in \mathscr{M}(\Omega) $ for a.e. $ t \in (0, T) $.

    In the meantime, the variational inequality in Definition \ref{Def.Sol} (\hyperlink{S2}{S2}) is equivalent to the following evolution equation:
$$
\alpha_{0}(\eta(t))\theta_{t}(t) {+\partial\Phi_\gamma(\alpha(\eta(t));\theta(t))} \ni 0 \mbox{ in } L^{2}(\Omega),  \mbox{ a.e. } t > 0,
$$
which is governed by the $ L^2 $-subdifferential $ {\partial \Phi_\gamma(\alpha(\eta(t));\cdot\,)} $ of the time-dependent convex function $ {\Phi_\gamma(\alpha(\eta(t)); \cdot\,)} $.
From this reformulation, we can see that the mathematical meaning of the singular diffusion $-\mbox{div}(\alpha(\eta)\frac{D\theta}{|D\theta|})$ in \eqref{2nd.eq} is given in terms of the time-dependent subdifferential $ {\partial \Phi_\gamma(\alpha(\eta(t));{}\cdot{})} $ of the weighted total variation, including the Dirichlet type boundary condition.
\end{rem}
\medskip

Now, the Main Theorems of this paper are stated as follows.
\paragraph{Main Theorem 1 (Existence of solution with energy-dissipation).}
{Let us assume (\hyperlink{A1}{A1})--(\hyperlink{A3}{A3}). Then, the system (\hyperlink{S}{S}) admits at least one solution $ [\eta, \theta] $ which fulfills the following properties.
\begin{description}
        \hypertarget{S4}{}
    \item[\textmd{(S4)}](Energy-dissipation) A real function $ \mathcal{J}_\gamma \in L_\mathrm{loc}^1([0, \infty)) \cap BV_\mathrm{loc}((0, \infty)) $, defined as:
    {
        \begin{align*}
            t \in (0, \infty) \mapsto & \mathcal{J}_\gamma(t) := \mathcal{F}_\gamma(\eta(t), \theta(t))
            \in [0, \infty),
        \end{align*}
    }
fulfills the following inequality:
{
\begin{align*}
\int_s^t \bigl( |\eta_t(\tau)|_{L^2(\Omega)}^2 & +|{\textstyle \sqrt{\alpha_0(\eta(\tau))} \theta_t(\tau)}|_{L^2(\Omega)}^2 \bigr) \, d \tau +\mathcal{J}_\gamma(t) \leq \mathcal{J}_\gamma(s)
\\
& \mbox{for a.e. $ s > 0 $ and any $ t \geq s $.}
\end{align*}
}
        \hypertarget{S5}{}
\item[\textmd{(S5)}](Properties in regular cases of initial data) If $ [\eta_0, \theta_0] \in D_0 \cap (H^1(\Omega) \times BV(\Omega)) $
then
\begin{equation*}
\begin{cases}
\eta \in W_\mathrm{loc}^{1, 2}([0, \infty); L^2(\Omega)) \cap L_\mathrm{loc}^\infty([0, \infty); H^1(\Omega)),
\\
\theta \in W_\mathrm{loc}^{1, 2}([0, \infty); L^2(\Omega)), ~~ |D \theta({}\cdot{})|(\Omega) \in L_\mathrm{loc}^\infty([0, \infty)),

\end{cases}
\end{equation*}
and
{
\begin{align*}
\int_0^t \bigl( |\eta_t(\tau)|_{L^2(\Omega)}^2 & +|{\textstyle \sqrt{\alpha_0(\eta(\tau))} \theta_t(\tau)}|_{L^2(\Omega)}^2 \bigr) \, d \tau +\mathcal{J}_\gamma(t) \leq \mathcal{J}_\gamma(0), \mbox{ \ for all $ t \geq 0 $.}
\end{align*}
}
\end{description}
}

\begin{rem}\label{Rem.sw-sols} Property (\hyperlink{S5}{S5}), though not used in the paper, is characteristic of solutions to parabolic PDEs or parabolic systems of PDEs. In this special case, $ [\eta_0, \theta_0] \in D_0 \cap (H^1(\Omega) \times BV(\Omega)) $, we will call the solution {\em a strong solution}, in view of the smoothing effect as in (\hyperlink{S0}{S0}) and (\hyperlink{S5}{S5}).
\end{rem}

\paragraph{Main Theorem 2 (Large-time behavior).}
{
    Under the assumptions (\hyperlink{A1}{A1})--(\hyperlink{A3}{A3}), let $ [\eta, \theta] $ be the solution to (S) with the property (\hyperlink{S4}{S4}) of energy-dissipation, that is obtained in Main Theorem 1. We denote by $ \omega_\infty(\eta, \theta) $ the $ \omega $-limit set of $ [\eta, \theta] $ in the large-time, i.e.:
\begin{equation*}
\omega_\infty(\eta, \theta) := \left\{ \begin{array}{l|l}
[\eta_\infty, \theta_\infty] \in L^2(\Omega)^2
& \parbox{6.6cm}{
$ [\eta(t_n), \theta(t_n)] \to [\eta_\infty, \theta_\infty] $ in $ L^2(\Omega)^2 $ for some $ 0 < t_1 < \cdots < t_n \uparrow \infty $, as $ n \to \infty $
}
\end{array} \right\}.
\end{equation*}
Then, $ \omega_\infty(\eta, \theta) $ is nonempty and compact in $ L^2(\Omega)^2 $, and any $ \omega $-limit point $ [\eta_\infty, \theta_\infty] $ fulfills the following conditions.
\begin{description}
    \hypertarget{S0infty}{}
    \item[\textmd{(S0)$_\infty$}]$ [\eta_\infty, \theta_\infty] \in D_0 \subset H^1(\Omega) \times BV(\Omega) $.
        \hypertarget{S1infty}{}
\item[\textmd{(S1)$_\infty$}]$ \eta_\infty $ solves the following variational identity:
    {
        \begin{align*}
            \int_\Omega \nabla \eta_\infty \cdot \nabla w \, dx & + \int_\Omega g(\eta_\infty) w \, dx +\int_{\overline{\Omega}} d \bigl[ w \alpha'(\eta_\infty) |D \theta_\infty| \bigr]_{\gamma} = 0,
            \\
            & \mbox{for any $ w \in H^1(\Omega) \cap L^\infty(\Omega) $. }
        \end{align*}
    }
        \hypertarget{S2infty}{}
\item[\textmd{(S2)$_\infty$}]$ \theta_\infty $ is the minimizer of {$ \Phi_\gamma(\alpha(\eta_\infty);{}\cdot{}) $} in $ L^2(\Omega) $, more precisely:
    {
        \begin{align*}
            \int_{\overline{\Omega}} d \bigl[ \alpha( & \eta_\infty) |D \theta_\infty| \bigr]_{\gamma} \leq  \int_{\overline{\Omega}} d \bigl[ \alpha(\eta_\infty) |D v| \bigr]_{\gamma} \,,\quad \mbox{for any $ v \in BV(\Omega) \cap L^2(\Omega) $. }
        \end{align*}
    }
\end{description}

}
\section{Approximating problem}\label{sec:approx}
~~ \vspace{-3ex}

In this section, the approximating problems to the system (S) are prescribed, and some key-properties of the approximating solutions are verified.

To this end, we first consider a class $ \{ |{}\cdot{}|_\nu \}_{\nu \in (0, 1)} $ of functions, which was adopted in \cite{MR3670006} as a ``suitable approximation'' to the Euclidean norm $ |{}\cdot{}| $.

\begin{defn}\label{euclregul}
We say that a collection of functions $\{|\cdot|_\nu\}_{\nu\in(0,1)}$ is a suitable approximation to the {Euclidean} norm if the following properties hold.
\begin{description}
\hypertarget{AP1}{}
\item [\textmd{(AP1)}] $|{}\cdot{}|_\nu:\R^N\mapsto [0,+\infty[$ is a convex $C^1$ function such that $|0|_\nu = 0 $, for all $ \nu \in (0, 1) $.
\hypertarget{AP2}{}
\item [\textmd{(AP2)}]There exist bounded functions $ a : (0,1) \to (0,1] $, $ b : (0, 1) \to [0, 1] $, $ c : (0,1) \to [1,\infty) $, such that:
\begin{equation*}
a(\nu) \to 1, ~ b(\nu) \to 0 \mbox{ and } c(\nu) \to 1, \mbox{ as $ \nu \downarrow 0 $,}
\end{equation*}
and
\begin{equation*}
\begin{array}{c}
\ds
|\xi|_\nu \geq a(\nu) |\xi| -b(\nu) \mbox{ and } |[\nabla |\cdot|_\nu](\xi)| \leq c(\nu)
\\[1ex]
\mbox{for any $ \xi \in \R^N $ and $ \nu \in (0, 1) $.}
\end{array}
\end{equation*}
\end{description}
\end{defn}

\begin{rem}\label{Rem.Euc}
Note that (\hyperlink{AP1}{AP1}) and (\hyperlink{AP2}{AP2}) lead to the following fact:
\begin{center}
$ |\xi|_\nu \leq [\nabla |\cdot|_\nu](\xi) \cdot \xi \leq c(\nu)|\xi| $, \ for all $ \xi \in \R^N $ and $ \nu \in (0, 1) $. \end{center}
Also, we note that the class of possible regularizations verifying  (\hyperlink{AP1}{AP1}) and (\hyperlink{AP2}{AP2}) covers a number of standard type regularizations, while it is a restricted version of the class adopted in \cite{MR3670006}. For instance,
\begin{itemize}
\item Hyperbola type, i.e. $\xi \in \R^N \mapsto |\xi|_\nu:=\sqrt{|\xi|^2 +\nu^2} -\nu$, for $ \nu \in (0, 1) $,
\item Yosida's regularization, i.e.
$ \xi \in \R^N \mapsto |\xi|_\nu := \ds \inf_{\varsigma \in \R^N} \left\{ |\xi| +\frac{\nu}{2} |\varsigma -\xi|^2 \right\} $, for $ \nu \in (0, 1) $,
\vspace{-1ex}
\item Hyperbolic-tangent type, i.e.
$ \xi \in \R^N \mapsto |\xi|_\nu := \ds \int_0^{|\xi|} \tanh \frac{\tau}{\nu} \, d \tau $, for $ \nu \in (0, 1) $,
\vspace{-1ex}
\item Arctangent type, i.e.
$ \xi \in \R^N \mapsto |\xi|_\nu := \ds \frac{2}{\pi} \int_0^{|\xi|} {\rm Tan}^{-1} \frac{\tau}{\nu} \, d \tau $, for $ \nu \in (0, 1) $,
\end{itemize}
Such flexibility is the reason for the ``suitability'' as in Definition \ref{euclregul}, and indeed, the essential thing in the proof of Main Theorem is not in the precise expression of $ \{ |{}\cdot{}|_\nu \}_{\nu \in (0, 1)} $, but in its suitability.
\end{rem}

Next, for any  $ 0 \leq \beta \in L^2(\Omega) $ and $ \gamma \in H^{\frac{1}{2}}(\Gamma) $, we define a class {$ \{ \Phi^{\nu}_{\gamma}(\beta;{}\cdot{}) \}_{\nu \in (0, 1)} $} of proper l.s.c. and convex functions on $ L^2(\Omega) $, by putting:
    \begin{equation*}
        z \in L^2(\Omega)  \mapsto \Phi^{\nu}_{\gamma}(\beta; z)
        \\
        := \left\{ \begin{array}{ll}
        \multicolumn{2}{l}{\ds \int_\Omega \beta |\nabla z|_\nu \, dx +\frac{\nu^2}{2} \int_\Omega |\nabla (z -[\gamma]^{\rm hm})|^2 \, dx,}
        \\[1ex]
        & \mbox{if $ z \in H^1(\Omega) $ and $ z = \gamma $ in $ H^{\frac{1}{2}}(\Gamma) $,}
        \\[2ex]
        \infty, & \mbox{otherwise,}
    \end{array} \right.
\end{equation*}
where $ [\gamma]^{\rm hm} \in H^2(\Omega) $ denotes the harmonic extension of $ \gamma \in H^{\frac{1}{2}}(\Gamma) $ as in Remark \ref{Rem.ex_hm}. This convex function corresponds to a relaxed version of the convex function {$ \Phi_{\gamma}(\beta;{}\cdot{}) $} given in \eqref{Phi_gm(bt.)}.
\begin{rem}\label{Rem.subdif_rx}
    For every $ 0 \leq \beta \in L^2(\Omega) $ and $ \gamma \in H^{\frac{1}{2}}(\Gamma) $, we denote by {$ \partial \Phi^{\nu}_{\gamma}(\beta;{}\cdot{}) $} the $ L^2 $-subdifferential of the convex function {$ \Phi^{\nu}_{\gamma}(\beta;{}\cdot{}) $}. Here, let us define:
\begin{center}
$ \jump{\beta \nabla u}_{\gamma}^{\nu} := \beta [\nabla |{}\cdot{}|_\nu](\nabla u) +\nu^2 \nabla (u -[\gamma]^{\rm hm}) $ in $ L^2(\Omega)^N $, for any $ u \in H^1(\Omega) $.
\end{center}
Then, as is easily checked (cf. \cite[Section 1]{MR0372419}), {$ [u, u^*] \in \partial \Phi^{\nu}_{\gamma}(\beta;{}\cdot{}) $} in $ L^2(\Omega)^2 $, iff. {$ u \in D(\Phi^{\nu}_{\gamma}(\beta;{}\cdot{})) $}, and
\begin{equation*}
u^* = -\mathrm{div} \jump{\beta \nabla u}_{\gamma}^{\nu} \mbox{ in the distribution sense,}
\end{equation*}
so that:
\begin{equation*}
    {D(\partial \Phi^{\nu}_{\gamma}(\beta;{}\cdot{}))} = \left\{ \begin{array}{l|l}
z \in H^1(\Omega) & \mathrm{div} \jump{\beta \nabla u}_{\gamma}^{\nu} \in L^2(\Omega) \mbox{ and $ u = \gamma $ in $ H^{\frac{1}{2}}(\Gamma) $}
\end{array} \right\}.
\end{equation*}
Additionally, for each {$ [u, u^*] \in \partial \Phi^{\nu}_{\gamma}(\beta;{}\cdot{}) $} in $ L^2(\Omega)^2 $, the directional derivative:
\begin{center}
$ \jump{\beta \nabla u}_{\gamma}^{\nu} \cdot n_\Gamma = \beta \bigl( [\nabla |{}\cdot{}|_\nu](\nabla u) +\nu^2  (u -[\gamma]^{\rm hm}) \bigr) \cdot n_\Gamma $,
\end{center}
can be identified as an element of $ H^{-\frac{1}{2}}(\Gamma) $, via the following variational form:
\begin{equation*}
\begin{array}{c}
\ds \bigl\langle  \jump{\beta \nabla u}_{\gamma}^{\nu} \cdot n_\Gamma, \varphi \bigr\rangle_{H^{\frac{1}{2}}(\Gamma)} = (u^*, \varphi)_{L^2(\Omega)} -\bigl(\jump{\beta \nabla u}_{\gamma}^{\nu}, \nabla \varphi \bigr)_{L^2(\Omega)^N}
\\[1ex]
\mbox{for any $ \varphi \in  H^1(\Omega) $.}
\end{array}
\end{equation*}
\end{rem}

Now, let us fix any $ \nu \in (0, 1) $, and for any $ \gamma \in H^{\frac{1}{2}}(\Gamma) $, we define:
{
    \begin{equation}\label{rxFE}
        \begin{array}{c}
            [\eta, \theta] \in L^2(\Omega)^2 \mapsto \mathcal{F}^{\nu}_{\gamma}(\eta, \theta) := \Psi_0(\eta) +\Phi^{\nu}_{\gamma}(\eta; \theta).
        \end{array}
    \end{equation}
}
Then, as a gradient system of this free-energy, we derive the following relaxed system, denoted by (S$_\nu$).
\bigskip

\noindent
$(\mbox{S$_\nu$})$:
\begin{equation*}
\ds \eta_{t}^{\nu}(t) -{\laplace}_{\rm N}\eta^{\nu}(t) + g(\eta^{\nu}(t)) + \alpha'(\eta^{\nu}(t))|\nabla\theta^{\nu}(t)|_\nu = 0
\ds\ \mbox{ in}\ L^{2}(\Omega),\ \mbox{ a.e. } t\in(0, T),
\vspace{1mm}
\end{equation*}
{
    \begin{equation*}
        \ds \alpha_{0}(\eta^{\nu}(t)) \theta_{t}^{\nu}(t) + \partial\Phi^{\nu}_\gamma(\alpha(\eta^{\nu}(t));\theta^{\nu}(t)) \ni 0 \mbox{ \ in \ } L^{2}(\Omega),\ \mbox{ a.e. } t\in(0, T),
        \vspace{1mm}
    \end{equation*}
}
\begin{equation*}
\ds [\eta^{\nu}(0), \theta^{\nu}(0)] = [\eta_{0}, \theta_{0}] \ \mbox{ in}\ L^{2}(\Omega).
\vspace{1mm}
\end{equation*}
In the context, the initial data $[\eta_{0}, \theta_{0}]$ satisfy
\begin{equation}\label{initial-2}
    [\eta_{0}, \theta_{0}] \in {D_0^\mathrm{rx} :=
    \left\{ \begin{array}{l|l}
        [\tilde{\eta}, \tilde{\theta}] \in D_0 \cap H^1(\Omega)^2 & \parbox{3cm}{$ \tilde{\theta} = \gamma $ in $ H^{\frac{1}{2}}(\Gamma) $}
    \end{array} \right\}.}
\end{equation}

On account of Remark \ref{Rem.subdif_rx}, the exact statement of the approximating problem (\hyperlink{AP_h^nu}{AP}$_h^\nu$) is prescribed as follows.

\begin{description}
        \hypertarget{AP_h^nu}
\item[\textmd{$ \mbox{(AP$_h^\nu $)} $:}]for any $ [\eta_0, \theta_0] \in D_{0}^\mathrm{rx} $, find a sequence $ \{ [\eta_{h, i}^{\nu}, \theta_{h, i}^{\nu}] \}_{i = 1}^\infty \in H^1(\Omega)^2 $ $ (\subset H^1(\Omega)^2) $, such that
\begin{equation}\label{kenApp00}
    [\eta_{h, i}^{\nu}, \theta_{h, i}^{\nu}] \in {D_{0}^\mathrm{rx},} \mbox{ for every $ i \in \N $,}
\end{equation}
\begin{equation}\label{kenApp01}
\begin{array}{ll}
\multicolumn{2}{l}{\ds
\frac{1}{h} (\eta_{h, i}^{\nu} -\eta_{h, i -1}^{\nu}, w)_{L^2(\Omega)} +(\nabla \eta_{h, i}^{\nu}, \nabla w)_{L^2(\Omega)^N} +(g(\eta_{h, i}^{\nu}), w)_{L^2(\Omega)}
}
\\[1ex]
\qquad \qquad & \ds +(\alpha'(\eta_{h, i}^{\nu}) |\nabla \theta_{h, i -1}^{\nu}|_\nu, w)_{L^2(\Omega)} = 0, \mbox{ for every $ w \in H^1(\Omega) $ and $ i \in \N $,}
\end{array}
\vspace{1ex}
\end{equation}
\begin{equation}\label{kenApp02}
    \frac{1}{h} \alpha_0(\eta_{h, i}^{\nu}) (\theta_{h, i}^{\nu} -\theta_{h, i -1}^{\nu}) {-\mathrm{div} \jump{\alpha(\eta_{h,i}^\nu) \nabla \theta_{h, i}^{\nu}}_{\gamma}^{\nu}} = 0 \mbox{ in $ L^2(\Omega) $, for every $ i \in \N $,}
\end{equation}
\end{description}
On this basis, we call the above sequence $ \{ [\eta_{h, i}^{\nu}, \theta_{h, i}^{\nu}] \}_{i = 1}^\infty \subset H^1(\Omega)^2 $ a solution to the approximating problem $ \mbox{(\hyperlink{AP_h^nu}{AP}$_h^\nu $)} $, or an approximating solution in short.
{
\begin{rem}\label{noteRx}
    We note that the range $ D_0^\mathrm{rx} $ of approximating solutions coincides with the effective domains of the relaxed convex functions $ D(\Phi_\gamma^\nu(\beta;{}\cdot{})) $ in $D_0$, for all $ \nu \in (0, 1) $, $ 0 \leq \beta \in L^2(\Omega) $, and $ \gamma \in H^{\frac{1}{2}}(\Gamma) $.
    Furthermore, invoking (\hyperlink{A3}{A3}) and Remark \ref{Rem.Fact4}, we can see that $ D_0^\mathrm{rx} $ is dense in $ D_0 $, in the topology of $ L^2(\Omega)^2 $. In fact, for any $ [\tilde{\eta}, \tilde{\theta}] \in D_0 $, we can construct an approximating sequence $ \{ [\tilde{\eta}_{n}, \tilde{\theta}_{n}] \}_{n = 1}^\infty \subset D_0^\mathrm{rx} $ of $ [\tilde{\eta}, \tilde{\theta}] $ in $ L^2(\Omega) $, as follows:
    \begin{equation*}
        [\tilde{\eta}_{n}, \tilde{\theta}_{n}] := [\tilde{\eta}, \tilde{\theta}_n^\circ +\tilde{\theta}_n^\perp] \mbox{ in $ H^1(\Omega) $, for $ n = 1, 2, 3, \dots $,}
    \end{equation*}
    where $ \{ \tilde{\theta}_n^\circ \}_{n = 1}^\infty \subset H_0^1(\Omega) $ is a standard approximating sequence of $ \tilde{\theta} $ in $ L^2(\Omega)^2 $, and $ \{ \theta_n^\perp \}_{n = 1}^\infty \subset H^1(\Omega) $ is a sequence, satisfying:
    \begin{equation*}
        \begin{cases}
            \tilde{\theta}_n^\perp = \gamma \mbox{ in $ H^{\frac{1}{2}}(\Gamma) $, for $ n = 1, 2, 3, \dots $,}
            \\[1ex]
            \tilde{\theta}_n^\perp \to 0 \mbox{ in $ L^2(\Omega) $, and } |D \tilde{\theta}_n^\perp|(\Omega) \to |\gamma|_{L^1(\Gamma)}, \mbox{ as $ n \to \infty $.}
        \end{cases}
    \end{equation*}
    Note that we can take the above sequence $ \{ \tilde{\theta}_n^\perp \}_{n = 1}^\infty $ by referring to Remark \ref{Rem.Fact4}.
\end{rem}
}

{Now, for the mathematical analysis of the approximating problem, we need to prove the following theorems, concerned with energy-inequalities for the approximating solutions.}
\begin{thm}[{\boldmath Energy-inequality for time-global estimates}]\label{thAp1}
    Under the assumptions {(\hyperlink{(A1)}{A1})--(\hyperlink{(A4)}{A4}), there exists a constant $h_* \in (0, 1)$, independent of the initial data $ [\eta_0^\nu, \theta_0^\nu] $ and  $ \{ \gamma \} $}, such that for any $ h \in (0, h_*) $ and $[\eta_0^\nu,\theta_0^\nu]\in D_{0}^\mathrm{rx} $, $(\hyperlink{AP_h^nu}{AP})_{h}^{\nu}$ admits a unique solution $ \{ [\eta_{h, i}^{\nu}, \theta_{h, i}^{\nu}]\}_{i = 1}^\infty $, such that:
\begin{align}\label{ene-inq}
    \ds \frac{1}{2h} |\eta_{h, i}^{\nu} & -\eta_{h, i-1}^{\nu}|_{L^{2}(\Omega)}^{2} +\frac{1}{h} \left| {\textstyle \sqrt{\alpha_{0}(\eta_{h, i}^{\nu})}}(\theta_{h, i}^{\nu} - \theta_{h, i-1}^{\nu}) \right|_{L^{2}(\Omega)}^{2}
\nonumber
\\
& \ds {+\mathcal{F}^{\nu}_{\gamma}(\eta_{h, i}^{\nu}, \theta_{h, i}^{\nu}) \leq \mathcal{F}^{\nu}_{\gamma}(\eta_{h, i -1}^{\nu},\theta_{h, i -1}^{\nu}), \mbox{ for $ i = 1, 2, 3, \dots $,}}
\end{align}
and
\begin{align}\label{lem2-inq}
    \ds \frac{1}{2} \sum_{i = 1}^m i |\eta_{h, i}^{\nu} & -\eta_{h, i -1}^{\nu}|^{2}_{L^{2}(\Omega)} +\sum_{i = 1}^m i {\left| {\textstyle \sqrt{\alpha_{0}(\eta_{h,i}^{\nu})}}(\theta_{h,i}^{\nu} -\theta_{h,i -1}^{\nu}) \right|_{L^{2}(\Omega)}^{2} }
\nonumber
\\
& \ds {+mh \mathcal{F}^{\nu}_{\gamma}(\eta_{h, m}^{\nu}, \theta_{h, m}^{\nu})} \le h \sum_{i = 1}^m \mathcal{F}^{\nu}_{\gamma}(\eta_{h, i -1}^{\nu}, \theta_{h, i -1}^{\nu}), \mbox{ for any $ m \in \N $.}
\end{align}
\end{thm}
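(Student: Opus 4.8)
The plan is to run a minimizing-movement (time-discrete) argument, noting first that, given $[\eta_{h,i-1}^\nu,\theta_{h,i-1}^\nu]$, the equations \eqref{kenApp01} and \eqref{kenApp02} decouple: \eqref{kenApp01} determines $\eta_{h,i}^\nu$ from the previous step alone, after which \eqref{kenApp02} determines $\theta_{h,i}^\nu$ from $\eta_{h,i}^\nu$. Since assumption (A1) only provides $g\in W^{1,\infty}_\mathrm{loc}(\R)$, the primitive $G$ appearing in $\Psi_0$ need not be convex; I would therefore first replace $g,\alpha,\alpha_0$ by globally Lipschitz modifications $g_*,\alpha_*,\alpha_{0*}$ that coincide with them on $[0,1]$ and keep their structural features ($\alpha_*$ convex with $\alpha_*'(0)=0$ and $\alpha_*\geq\delta_\alpha$, $\alpha_{0*}\geq\delta_\alpha$, $g_*(0)\leq0$, $g_*(1)\geq1$, primitive $G_*\geq0$), and fix $L_g:=\|g'\|_{L^\infty(0,1)}$ together with any $h_*\in(0,1)$ satisfying $2L_gh_*\leq1$ (any $h_*\in(0,1)$ if $L_g=0$). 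Thus $h_*$ depends only on $g$, in particular not on the initial data, on $\gamma$, or on $\nu$.

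Fix $h<h_*$ and argue by induction on $i$, with base case \eqref{initial-2}. The modified form of \eqref{kenApp01} (with $g,\alpha'$ replaced by $g_*,\alpha_*'$) is the Euler--Lagrange equation of $\eta\mapsto\frac1{2h}|\eta-\eta_{h,i-1}^\nu|_{L^2(\Omega)}^2+\Psi_0(\eta)+\int_\Omega\alpha_*(\eta)|\nabla\theta_{h,i-1}^\nu|_\nu\,dx$ on $H^1(\Omega)$. Writing $\int_\Omega G_*(\eta)\,dx=\int_\Omega\bigl(G_*(\eta)+\tfrac{L_g}2\eta^2\bigr)\,dx-\tfrac{L_g}2|\eta|_{L^2(\Omega)}^2$ exhibits this functional as the sum of a convex functional (convexity of $\alpha_*$, of $s\mapsto G_*(s)+\tfrac{L_g}2s^2$, and of the Dirichlet energy) and the quadratic $\tfrac12\bigl(\tfrac1h-L_g\bigr)|\eta|_{L^2(\Omega)}^2-\tfrac1h(\eta,\eta_{h,i-1}^\nu)_{L^2(\Omega)}$, which is strictly convex precisely because $h<h_*$; being also coercive and l.s.c.\ on $H^1(\Omega)$, the direct method yields a unique $\eta_{h,i}^\nu\in H^1(\Omega)$. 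Given $\eta_{h,i}^\nu$, Remark \ref{Rem.subdif_rx} identifies \eqref{kenApp02} with the statement that $\theta_{h,i}^\nu$ minimizes the strictly convex, coercive, l.s.c.\ functional $z\mapsto\frac1{2h}\int_\Omega\alpha_{0*}(\eta_{h,i}^\nu)|z-\theta_{h,i-1}^\nu|^2\,dx+\Phi^\nu_\gamma(\alpha_*(\eta_{h,i}^\nu);z)$ (coercivity from $\alpha_{0*}\geq\delta_\alpha$, the term $\tfrac{\nu^2}2|\nabla(z-[\gamma]^{\rm hm})|^2$ in $\Phi^\nu_\gamma$, Poincar\'e, and the constraint $z=\gamma$ on $\Gamma$); since the first summand is continuous on $L^2(\Omega)$, the subdifferential sum rule applies and puts the minimizer into $D(\partial\Phi^\nu_\gamma(\alpha_*(\eta_{h,i}^\nu);\cdot))$ with $-\mathrm{div}\jump{\alpha_*(\eta_{h,i}^\nu)\nabla\theta_{h,i}^\nu}_\gamma^\nu\in L^2(\Omega)$, i.e.\ \eqref{kenApp02}. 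Uniqueness of the whole sequence is then immediate, each step being the unique minimizer of a strictly convex functional.

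It remains to see that $[\eta_{h,i}^\nu,\theta_{h,i}^\nu]\in D_0^\mathrm{rx}$, which closes the induction and shows the modified nonlinearities may be replaced by $g,\alpha,\alpha_0$, so that \eqref{kenApp00}--\eqref{kenApp02} genuinely hold. Testing \eqref{kenApp01} with $[\eta_{h,i}^\nu-1]^+$ and with $-[-\eta_{h,i}^\nu]^+$, and using $\eta_{h,i-1}^\nu\in[0,1]$, $g_*(1)\geq1$, $g_*(0)\leq0$, and that $\alpha_*'$ is nondecreasing with $\alpha_*'(0)=0$, each of the reaction, gradient and coupling terms acquires a favourable sign and one gets $0\leq\eta_{h,i}^\nu\leq1$. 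Setting $M:=|\gamma|_{L^\infty(\Gamma)}$, property (ex.1) of Remark \ref{Rem.ex_hm} gives $-M\leq[\gamma]^{\rm hm}\leq M$, hence $[\theta_{h,i}^\nu-M]^+\in H_0^1(\Omega)$; testing \eqref{kenApp02} with it, the reaction term is bounded below by $\tfrac{\delta_\alpha}h|[\theta_{h,i}^\nu-M]^+|_{L^2(\Omega)}^2$ (using $\theta_{h,i-1}^\nu\leq M$), while $\int_\Omega\jump{\alpha_*(\eta_{h,i}^\nu)\nabla\theta_{h,i}^\nu}_\gamma^\nu\cdot\nabla[\theta_{h,i}^\nu-M]^+\,dx\geq0$: its $\alpha_*$-part is nonnegative by the inequality $[\nabla|\cdot|_\nu](\xi)\cdot\xi\geq0$ of Remark \ref{Rem.Euc} (and $\alpha_*\geq0$), and its $\nu^2$-part reduces to $\nu^2|\nabla[\theta_{h,i}^\nu-M]^+|_{L^2(\Omega)}^2$ because $[\gamma]^{\rm hm}$ is harmonic against $H_0^1(\Omega)$-functions, cf.\ \eqref{harmonic01}. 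Hence $\theta_{h,i}^\nu\leq M$, and symmetrically $\theta_{h,i}^\nu\geq-M$; together with $\theta_{h,i}^\nu=\gamma$ on $\Gamma$ this gives $[\eta_{h,i}^\nu,\theta_{h,i}^\nu]\in D_0^\mathrm{rx}$.

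Finally, test \eqref{kenApp01} with $\eta_{h,i}^\nu-\eta_{h,i-1}^\nu$: the gradient term gives $\geq\tfrac12|\nabla\eta_{h,i}^\nu|_{L^2(\Omega)}^2-\tfrac12|\nabla\eta_{h,i-1}^\nu|_{L^2(\Omega)}^2$, the reaction term gives $\geq\int_\Omega\bigl(G(\eta_{h,i}^\nu)-G(\eta_{h,i-1}^\nu)\bigr)\,dx-\tfrac{L_g}2|\eta_{h,i}^\nu-\eta_{h,i-1}^\nu|_{L^2(\Omega)}^2$ (Taylor with remainder, legitimate since both values lie in $[0,1]$, where $|g'|\leq L_g$), and convexity of $\alpha$ gives $\alpha'(\eta_{h,i}^\nu)(\eta_{h,i}^\nu-\eta_{h,i-1}^\nu)\geq\alpha(\eta_{h,i}^\nu)-\alpha(\eta_{h,i-1}^\nu)$, which tested against $|\nabla\theta_{h,i-1}^\nu|_\nu\geq0$ yields $\int_\Omega\bigl(\alpha(\eta_{h,i}^\nu)-\alpha(\eta_{h,i-1}^\nu)\bigr)|\nabla\theta_{h,i-1}^\nu|_\nu\,dx$. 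Separately, inserting the choice $v=\theta_{h,i-1}^\nu$ (admissible by the inductive hypothesis) into the subdifferential inequality for \eqref{kenApp02} gives $\tfrac1h|\sqrt{\alpha_0(\eta_{h,i}^\nu)}(\theta_{h,i}^\nu-\theta_{h,i-1}^\nu)|_{L^2(\Omega)}^2\leq\Phi^\nu_\gamma(\alpha(\eta_{h,i}^\nu);\theta_{h,i-1}^\nu)-\Phi^\nu_\gamma(\alpha(\eta_{h,i}^\nu);\theta_{h,i}^\nu)$. Adding these two and using $\Phi^\nu_\gamma(\alpha(\eta_{h,i}^\nu);\theta_{h,i-1}^\nu)-\int_\Omega\bigl(\alpha(\eta_{h,i}^\nu)-\alpha(\eta_{h,i-1}^\nu)\bigr)|\nabla\theta_{h,i-1}^\nu|_\nu\,dx=\Phi^\nu_\gamma(\alpha(\eta_{h,i-1}^\nu);\theta_{h,i-1}^\nu)$ together with $\mathcal{F}^\nu_\gamma(\eta,\theta)=\Psi_0(\eta)+\Phi^\nu_\gamma(\alpha(\eta);\theta)$, the $\Psi_0$- and $\Phi^\nu_\gamma$-contributions telescope, giving $\bigl(\tfrac1h-\tfrac{L_g}2\bigr)|\eta_{h,i}^\nu-\eta_{h,i-1}^\nu|_{L^2(\Omega)}^2+\tfrac1h|\sqrt{\alpha_0(\eta_{h,i}^\nu)}(\theta_{h,i}^\nu-\theta_{h,i-1}^\nu)|_{L^2(\Omega)}^2+\mathcal{F}^\nu_\gamma(\eta_{h,i}^\nu,\theta_{h,i}^\nu)\leq\mathcal{F}^\nu_\gamma(\eta_{h,i-1}^\nu,\theta_{h,i-1}^\nu)$; since $2L_gh_*\leq1$ forces $\tfrac1h-\tfrac{L_g}2\geq\tfrac1{2h}$, this is \eqref{ene-inq}. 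Then \eqref{lem2-inq} follows by multiplying \eqref{ene-inq} by $ih$, summing over $i=1,\dots,m$, and applying Abel's summation by parts, $\sum_{i=1}^m i\bigl(\mathcal{F}^\nu_\gamma(\eta_{h,i-1}^\nu,\theta_{h,i-1}^\nu)-\mathcal{F}^\nu_\gamma(\eta_{h,i}^\nu,\theta_{h,i}^\nu)\bigr)=\sum_{i=1}^m\mathcal{F}^\nu_\gamma(\eta_{h,i-1}^\nu,\theta_{h,i-1}^\nu)-m\,\mathcal{F}^\nu_\gamma(\eta_{h,m}^\nu,\theta_{h,m}^\nu)$, followed by rearrangement. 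The main obstacle is the non-convexity of $G$: it is what forces both the smallness $h<h_*$ and the semiconvex splitting, in the solvability of the $\eta$-step and in the energy inequality alike, and it requires checking that $h_*$ may be chosen to depend only on $\|g'\|_{L^\infty(0,1)}$; the other delicate point is the $L^\infty$-bound on $\theta_{h,i}^\nu$, which rests on the harmonicity and order-preservation of $[\gamma]^{\rm hm}$ and on the sign $[\nabla|\cdot|_\nu](\xi)\cdot\xi\geq0$. Everything else — the direct method, the comparison tests, the telescoping and the Abel summation — is routine.
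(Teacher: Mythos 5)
Your proposal is correct and follows essentially the same route the paper takes: recursive solvability via two decoupled convex (or semiconvex) minimization problems, $L^\infty$ truncation arguments to confine $\eta$ to $[0,1]$ and $\theta$ to $[-|\gamma|_{L^\infty(\Gamma)},|\gamma|_{L^\infty(\Gamma)}]$ (the latter using that $[\gamma]^{\rm hm}$ is harmonic so the $\nu^2$-term is inert against $H^1_0(\Omega)$ test functions), the energy estimate by testing the $\eta$-equation with the backward difference and using convexity of $\alpha$ together with a Taylor/semiconvexity bound on $G$, the $\theta$-estimate directly from the subdifferential inequality at $v=\theta_{h,i-1}^\nu$, and then the weighted Abel summation. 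The one place where your wording is looser than the argument demands is the claim that, in the $\eta$-bound step, ``each of the reaction, gradient and coupling terms acquires a favourable sign'': the reaction term $(g_*(\eta),[\eta-1]^+)$ is not nonnegative by itself under (A1) (which does not require $g$ to be monotone), and one must let the time-difference term $\frac1h|[\eta-1]^+|_{L^2}^2$ absorb the possibly negative Lipschitz excess of $g_*$, which is exactly where the same smallness $h<h_*$ enters — worth saying explicitly, though it is consistent with the constraint you already imposed. The paper itself delegates this step (its Lemma 4.1) to an earlier reference, so your decision to carry it out inline is legitimate and adds nothing that conflicts with the paper's route.
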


\begin{thm}[{\boldmath Energy-inequality for short-time estimates}]\label{thAp2}
Under the assumptions and notations as in Theorem \ref{thAp1}, there exists constants $ \nu_* $ and $ A_k $, $ k = 1, 2, 3 $, such that if $ h \in (0, h_*) $ and $ \nu \in (0, \nu_*) $, then:
\begin{align}\label{thAp2-00}
    \frac{1}{2} \bigl( |{\eta_{h, m}^{\nu}} & - w_0 |_{L^2(\Omega)}^2 -|{\eta_{h, 0}^{\nu}} -w_0|_{L^2(\Omega)}^2 \bigr)
    \nonumber
    \\
    & +\frac{A_1}{2} \bigl( |{\theta_{h, m}^{\nu}} -v_0|_{L^2(\Omega)}^2 -|{\theta_{h, 0}^\nu} -v_0|_{L^2(\Omega)}^2 \bigr)
\nonumber
\\
& {+A_2 h \sum_{i = 1}^{m} \mathcal{F}^{\nu}_{\gamma}(\eta_{h, i -1}^{\nu}, \theta_{h, i -1}^{\nu}) \leq {\frac{h}{A_2}} \mathcal{F}^{\nu}_{\gamma}(\eta_0^\nu, \theta_0^\nu)}
\\
& +A_3 {mh} (1 +|w_0|_{H^1(\Omega)}^2 +|v_0|_{H^1(\Omega)}^2 +|{\gamma}|_{H^{\frac{1}{2}}(\Gamma)}^2),
\nonumber
\\
& \qquad \mbox{for any $ [w_0, v_0] \in {D(\mathcal{F}_{\gamma}^{\nu}(\cdot))} $ and any $ m \in \N $.}
\nonumber
\end{align}
\end{thm}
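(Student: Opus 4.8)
The plan is to test the two equations of $(\mathrm{AP}_h^\nu)$ against the comparison increments $\eta_{h,i}^\nu-w_0$ and $A_1(\theta_{h,i}^\nu-v_0)$, multiply by $h$, sum over $i=1,\dots,m$, and telescope. Existence, uniqueness, the invariance $[\eta_{h,i}^\nu,\theta_{h,i}^\nu]\in D_0^{\mathrm{rx}}$, and the discrete estimates \eqref{ene-inq}--\eqref{lem2-inq} are already furnished by Theorem \ref{thAp1}, so only the inequality \eqref{thAp2-00} has to be produced; the constants $A_1,A_2,A_3$ and the threshold $\nu_*$ will be fixed by the absorption requirements below, with $\nu_*$ chosen so that the bounded functions $a(\nu),b(\nu),c(\nu)$ of Definition~\ref{euclregul} stay within fixed bounds for $\nu<\nu_*$.

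Testing \eqref{kenApp01} with $w=\eta_{h,i}^\nu-w_0\in H^1(\Omega)$ and multiplying by $h$, the discrete time-difference gives, via $(a-b,a-c)_{L^2(\Omega)}=\tfrac12(|a-c|_{L^2(\Omega)}^2-|b-c|_{L^2(\Omega)}^2+|a-b|_{L^2(\Omega)}^2)$, the telescoping block $\tfrac12(|\eta_{h,i}^\nu-w_0|_{L^2(\Omega)}^2-|\eta_{h,i-1}^\nu-w_0|_{L^2(\Omega)}^2)$ plus a nonnegative remainder; the Dirichlet term $h|\nabla\eta_{h,i}^\nu|_{L^2(\Omega)}^2$ is split, one half absorbing $h(\nabla\eta_{h,i}^\nu,\nabla w_0)_{L^2(\Omega)^N}$ by Young's inequality, the other half bounding $h\Psi_0(\eta_{h,i}^\nu)$ from below up to the additive constant $h\,\mathcal{L}^N(\Omega)\max_{[0,1]}G$ (legitimate since $0\le\eta_{h,i}^\nu\le1$, hence $0\le G(\eta_{h,i}^\nu)\le\max_{[0,1]}G$); the term $(g(\eta_{h,i}^\nu),\eta_{h,i}^\nu-w_0)_{L^2(\Omega)}$ is controlled by $C(1+|w_0|_{L^2(\Omega)})$ using $g\in W^{1,\infty}_{\mathrm{loc}}(\R)$ and $0\le\eta_{h,i}^\nu\le1$. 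Testing \eqref{kenApp02} with $\theta_{h,i}^\nu-v_0$ and multiplying by $hA_1$: by Remark \ref{Rem.subdif_rx} one has $-\mathrm{div}\jump{\alpha(\eta_{h,i}^\nu)\nabla\theta_{h,i}^\nu}_\gamma^\nu\in\partial\Phi_\gamma^\nu(\alpha(\eta_{h,i}^\nu);\theta_{h,i}^\nu)$, so the subdifferential inequality produces $A_1h\,\Phi_\gamma^\nu(\alpha(\eta_{h,i}^\nu);\theta_{h,i}^\nu)-A_1h\,\Phi_\gamma^\nu(\alpha(\eta_{h,i}^\nu);v_0)$, the comparison value obeying $\Phi_\gamma^\nu(\alpha(\eta_{h,i}^\nu);v_0)\le C(1+|v_0|_{H^1(\Omega)}^2+|\gamma|_{H^{1/2}(\Gamma)}^2)$ by $\delta_\alpha\le\alpha(\eta_{h,i}^\nu)\le\alpha(1)$, the bound $|{\cdot}|_\nu\le c(\nu)|{\cdot}|$ of Remark \ref{Rem.Euc}, and \eqref{hm_const}; the weighted time-difference $(\alpha_0(\eta_{h,i}^\nu)(\theta_{h,i}^\nu-\theta_{h,i-1}^\nu),\theta_{h,i}^\nu-v_0)_{L^2(\Omega)}$ is handled by the weighted three-term identity, and $\delta_\alpha\le\alpha_0$ turns the surviving weighted $L^2$-terms into the plain ones appearing in \eqref{thAp2-00}.

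Adding the two summed inequalities and telescoping, the decisive point is the treatment of (i) the coupling term $h(\alpha'(\eta_{h,i}^\nu)|\nabla\theta_{h,i-1}^\nu|_\nu,\eta_{h,i}^\nu-w_0)_{L^2(\Omega)}$ and (ii) the commutator error $\tfrac{A_1}{2}\sum_i\int_\Omega(\alpha_0(\eta_{h,i}^\nu)-\alpha_0(\eta_{h,i+1}^\nu))(\theta_{h,i}^\nu-v_0)^2\,dx$ produced by the step-dependence of the weight in \eqref{kenApp02}. For (i), using $0\le\alpha'(\eta_{h,i}^\nu)\le\alpha'(1)$, $0\le\eta_{h,i}^\nu\le1$, $\delta_\alpha\le\alpha(\eta_{h,i-1}^\nu)$ and $|{\cdot}|_\nu\le c(\nu)|{\cdot}|$, the dominant part is bounded by $\tfrac{c(\nu)\alpha'(1)}{\delta_\alpha}\,h\,\Phi_\gamma^\nu(\alpha(\eta_{h,i-1}^\nu);\theta_{h,i-1}^\nu)$ and the rest by $Ch(1+|w_0|_{H^1(\Omega)}^2+|\gamma|_{H^{1/2}(\Gamma)}^2)$; for (ii) one uses $|\alpha_0(\eta_{h,i}^\nu)-\alpha_0(\eta_{h,i+1}^\nu)|\le L_{\alpha_0}|\eta_{h,i}^\nu-\eta_{h,i+1}^\nu|$, the bound $|\theta_{h,i}^\nu|\le|\gamma|_{L^\infty(\Gamma)}$, and the estimates \eqref{ene-inq}--\eqref{lem2-inq}, which control $\sum_i|\eta_{h,i}^\nu-\eta_{h,i+1}^\nu|_{L^2(\Omega)}^2$. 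Choosing $A_1$ large (relative to $\alpha'(1)/\delta_\alpha$, with $\nu_*$ keeping $c(\nu)$ bounded), these error contributions are absorbed into the left-hand $A_1h\sum_i\Phi_\gamma^\nu(\alpha(\eta_{h,i}^\nu);\theta_{h,i}^\nu)$ and $h\sum_i\Psi_0(\eta_{h,i}^\nu)$ after shifting the summation index by one via the energy-decay \eqref{ene-inq}, the shift costing precisely one term $\tfrac{h}{A_2}\mathcal{F}_\gamma^\nu(\eta_0^\nu,\theta_0^\nu)$. Combining $h\Psi_0$ with the residual part of $A_1h\,\Phi_\gamma^\nu$ into $A_2h\,\mathcal{F}_\gamma^\nu(\eta_{h,i-1}^\nu,\theta_{h,i-1}^\nu)$ and gathering all remaining bounded contributions into $A_3\,mh(1+|w_0|_{H^1(\Omega)}^2+|v_0|_{H^1(\Omega)}^2+|\gamma|_{H^{1/2}(\Gamma)}^2)$ yields \eqref{thAp2-00}.

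I expect the main obstacle to be exactly this absorption bookkeeping for the error terms (i) and (ii): verifying, uniformly in $h$ and in $\nu<\nu_*$, that each of them splits into a piece absorbable into the left-hand $A_2h\sum_i\mathcal{F}_\gamma^\nu(\eta_{h,i-1}^\nu,\theta_{h,i-1}^\nu)$ together with a single multiple of $h\,\mathcal{F}_\gamma^\nu(\eta_0^\nu,\theta_0^\nu)$, plus a piece controlled by $mh(1+|w_0|_{H^1(\Omega)}^2+|v_0|_{H^1(\Omega)}^2+|\gamma|_{H^{1/2}(\Gamma)}^2)$, drawing only on the a priori bounds of Theorem \ref{thAp1} and on the uniform control of $a(\nu),b(\nu),c(\nu)$; this balance is what pins down the admissible choices of $A_1$ (large), $A_2$, $A_3$ and $\nu_*$.
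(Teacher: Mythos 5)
Your overall scaffold — testing both equations against the comparison increments, multiplying by $h$, summing and telescoping, and absorbing coupling errors into the free-energy sum — is the right shape, and your handling of the $\eta$-equation (convexity of $\alpha$, Young's inequality) as well as the subdifferential inequality for the elliptic part of the $\theta$-equation are sound. The gap is precisely at the point you flag as the main obstacle: the weighted time-difference term $\tfrac{1}{h}(\alpha_0(\eta_{h,i}^\nu)(\theta_{h,i}^\nu-\theta_{h,i-1}^\nu),\theta_{h,i}^\nu-v_0)_{L^2(\Omega)}$, and the commutator that the weighted three-term identity produces after telescoping cannot be controlled by the tools you invoke.

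Concretely, your telescope leaves the error
\begin{equation*}
\frac{1}{2}\sum_{i=1}^m \int_\Omega \bigl(\alpha_0(\eta_{h,i}^\nu)-\alpha_0(\eta_{h,i-1}^\nu)\bigr)(\theta_{h,i-1}^\nu-v_0)^2\,dx,
\end{equation*}
and you propose to bound it via the Lipschitz bound on $\alpha_0$ together with $\sum_i|\eta_{h,i}^\nu-\eta_{h,i-1}^\nu|_{L^2(\Omega)}^2\le 2h\,\mathcal{F}_\gamma^\nu(\eta_0^\nu,\theta_0^\nu)$ from \eqref{ene-inq}. But Cauchy--Schwarz in $i$ only gives $\sum_i|\eta_{h,i}^\nu-\eta_{h,i-1}^\nu|_{L^2(\Omega)}\le\sqrt{2mh\,\mathcal{F}_\gamma^\nu(\eta_0^\nu,\theta_0^\nu)}$, and the quantity $\sqrt{mh\,\mathcal{F}}$ is \emph{not} dominated by $A_3\,mh+\tfrac{h}{A_2}\mathcal{F}$ uniformly in $h$: by the arithmetic--geometric mean inequality $A_3\,mh+\tfrac{h}{A_2}\mathcal{F}\ge 2\sqrt{A_3/A_2}\,h\sqrt{m\mathcal{F}}=2\sqrt{A_3/A_2}\,\sqrt{h}\,\sqrt{mh\mathcal{F}}$, which falls short of $\sqrt{mh\mathcal{F}}$ by the factor $\sqrt{h}$. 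Bounding $|\eta_{h,i}^\nu-\eta_{h,i-1}^\nu|\le 1$ instead yields an $\mathcal{O}(m)$ contribution, again not $\mathcal{O}(mh)$. Either way the commutator cannot be absorbed with constants $A_2,A_3$ independent of $m$ and $h$, so the short-time estimate \eqref{thAp2-00} is not reached. (There is also a secondary integrability issue you do not address: $v_0\in D(\mathcal{F}_\gamma^\nu(\cdot))$ is only in $H^1(\Omega)$, so the pointwise bound $|\theta_{h,i}^\nu|\le|\gamma|_{L^\infty(\Gamma)}$ does not give pointwise control of $(\theta_{h,i-1}^\nu-v_0)^2$, and a H\"older argument would require $v_0\in L^4(\Omega)$, i.e. a dimension restriction or an extra assumption.)

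The paper avoids the commutator entirely by testing \eqref{kenApp02} with the \emph{scaled} function $(\theta_{h,i}^\nu-v_0)/\alpha_0(\eta_{h,i}^\nu)\in H_0^1(\Omega)$. The factor $\alpha_0(\eta_{h,i}^\nu)$ then cancels in the time-difference term, which becomes the plain $\tfrac{1}{h}(\theta_{h,i}^\nu-\theta_{h,i-1}^\nu,\theta_{h,i}^\nu-v_0)_{L^2(\Omega)}$ and telescopes with no commutator at all, giving directly the unweighted $\tfrac{A_1}{2}(|\theta_{h,m}^\nu-v_0|^2-|\theta_{h,0}^\nu-v_0|^2)$ in \eqref{thAp2-00}. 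The cost is that after Green's formula the elliptic part must be expanded by the quotient rule, producing the four corrections $I_1,\dots,I_4$ in \eqref{thAp2-06}--\eqref{thAp2-09}; these are $\mathcal{O}(1)$ per step (they contain no increments $\eta_{h,i}^\nu-\eta_{h,i-1}^\nu$) and are absorbed via Young's inequality into $\int_\Omega|\nabla\eta_{h,i}^\nu|^2\,dx$, the $\nu^2\int_\Omega|\nabla(\theta_{h,i}^\nu-[\gamma]^{\rm hm})|^2\,dx$ term, and a single $R_*(1+|w_0|_{H^1(\Omega)}^2+|v_0|_{H^1(\Omega)}^2+|\gamma|_{H^{1/2}(\Gamma)}^2)$ on the right. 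The smallness requirement $\nu<\nu_*$ enters exactly at this absorption step (see \eqref{thAp2-22}--\eqref{thAp2-24}). Replacing your weighted telescope by this scaled test function is the missing idea.
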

\section{Auxiliary lemmas}
\ \ \vspace{-4ex}

Throughout this section, we fix a bounded and open time-interval $ I \subset (0, \infty) $, and we prove some auxiliary lemmas concerned with the mathematical treatments of time-dependent $ BV(\Omega) $-valued functions on $ I $.

{
    We begin with recalling the following fact, obtained in \cite[Lemma 5]{MR3268865}, \cite[(Fact\,5)]{MR3670006}, and \cite[Remark 2]{MR3462536}.
\begin{description}
    \hypertarget{Fact5}{}
    \item[(Fact\,5) (Strict approximation)]For any $v \in  L^{2}(I;L^{2}(\Omega))$ satisfying $|D v(\cdot)|(\Omega) \in L^{1}(I)$, there exists a sequence of smooth functions $\{ v_{n} \}_{n = 1}^\infty \subset C^{\infty}(\overline{I\times\Omega})$, such that:
\begin{equation*}
    \begin{array}{c}
        \ds v_{n} \to v \mbox{ in } L^{2}(I;L^{2}(\Omega)) \mbox{ and } \int_{I} \left| \int_{\Omega} |\nabla v_{n}(t)| dx -\int_{\Omega}d|D v(t)| \right| dt \to 0\,,\ \
        \mbox{as $ n \to \infty $,}
    \end{array}
\end{equation*}
and moreover,
\begin{equation*}
    \begin{array}{c}
        \ds v_n(t) \to v(t) \mbox{ in } L^2(\Omega) \mbox{ and } \int_{\Omega} |\nabla v_{n}(t)| dx \to \int_{\Omega}d|D v(t)|, \mbox{ a.e. $ t \in I $\,},~
        \mbox{as $ n \to \infty $,}
    \end{array}
\end{equation*}
\end{description}
}

{
    As an advanced version of the above (\hyperlink{Fact5}{Fact\,5}), we next prove the following Lemma.
\begin{lem}\label{axLem_reg}
    Let $ \gamma \in H^{\frac{1}{2}}(\Gamma) $ be a fixed function, and let $ \{ \gamma_n \}_{n = 1}^\infty \subset H^{1}(\Gamma) $ be a sequence of functions, such that:
    \begin{equation}\label{convGamma}
        \gamma_n \to \gamma \mbox{ weakly in $ H^{\frac{1}{2}}(\Gamma) $, as $ n \to \infty $.}
    \end{equation}
    Then, for any $ v \in L^2(I; L^2(\Omega))  $ satisfying $  |Dv({}\cdot{})|(\Omega) \in L^1(I) $, there exists a sequence $ \{ v_n \}_{n = 1}^\infty \subset L^\infty(I; H^1(\Omega)) $, such that:
\begin{equation}\label{v_n^circ01}
v_n(t) = \gamma_n \mbox{ in $ H^{\frac{1}{2}}(\Gamma) $, for a.e. $ t \in I $,}
\end{equation}
\begin{equation}\label{v_n^circ02}
\begin{array}{c}
\ds v_n \to v \mbox{ in $ L^2(I; L^2(\Omega)) $ and }
    \int_I \left| \int_\Omega |\nabla v_n(t)| \, dx -\int_{\overline{\Omega}} d \bigl[ |Dv({}\cdot{})| \bigr]_{\gamma} \right| \, dt \to 0,
\\[2ex]
\mbox{ as $ n \to \infty $.}
\end{array}
\end{equation}
\end{lem}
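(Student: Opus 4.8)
\emph{Overall strategy.} The plan is to combine the space--time strict approximation (\hyperlink{Fact5}{Fact\,5}) with a boundary-layer construction of the type used in (\hyperlink{Fact4}{Fact\,4})/Remark \ref{Rem.Fact4}, carried out uniformly in the time variable, and finally to transfer the boundary datum from $\gamma_n$ to $\gamma$ using only $L^1(\Gamma)$-convergence. First I record that for a.e.\ $t\in I$ the slice $v(t)$ lies in $BV(\Omega)\cap L^2(\Omega)$ and, by \eqref{01_[bt|Du|]_gm} with $\beta\equiv1$ and $B=\overline{\Omega}$, $\int_{\overline{\Omega}}d[|Dv(t)|]_{\gamma}=|Dv(t)|(\Omega)+\int_\Gamma|v(t)-\gamma|\,d\mathcal{H}^{N-1}$, and the same identity holds with $\gamma$ replaced by any $\gamma_n$. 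Since $\Gamma$ is a compact $C^1$-hypersurface, $H^{1/2}(\Gamma)$ is compactly embedded in $L^2(\Gamma)\hookrightarrow L^1(\Gamma)$, so \eqref{convGamma} forces $\gamma_n\to\gamma$ strongly in $L^1(\Gamma)$, whence $\bigl|\int_{\overline{\Omega}}d[|Dv(t)|]_{\gamma_n}-\int_{\overline{\Omega}}d[|Dv(t)|]_{\gamma}\bigr|\le|\gamma_n-\gamma|_{L^1(\Gamma)}$ for a.e.\ $t$. Thus it suffices to construct, for each $m\in\N$, a function $w_m\in L^\infty(I;H^1(\Omega))$ with $w_m(t)=\gamma_m$ in $H^{1/2}(\Gamma)$ for a.e.\ $t\in I$, $|w_m-v|_{L^2(I;L^2(\Omega))}<1/m$ and $\int_I\bigl|\int_\Omega|\nabla w_m(t)|\,dx-\int_{\overline{\Omega}}d[|Dv(t)|]_{\gamma_m}\bigr|\,dt<1/m$; then $v_n:=w_n$ works, since the triangle inequality and the previous bound give \eqref{v_n^circ01}--\eqref{v_n^circ02}.

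\emph{Constructing $w_m$.} Fix $m$. I would first apply (\hyperlink{Fact5}{Fact\,5}) to $v$ to obtain $\{v_k\}_k\subset C^\infty(\overline{I\times\Omega})$ with $v_k\to v$ in $L^2(I;L^2(\Omega))$, $\int_I\bigl|\int_\Omega|\nabla v_k(t)|\,dx-|Dv(t)|(\Omega)\bigr|\,dt\to0$, and, for a.e.\ $t$, $v_k(t)\to v(t)$ strictly in $BV(\Omega)$; by Remark \ref{Rem.trace} the last property upgrades to $\mathfrak{tr}_{\Gamma}v_k(t)\to\mathfrak{tr}_{\Gamma}v(t)$ in $L^1(\Gamma)$ for a.e.\ $t$, and — using $|\mathfrak{tr}_{\Gamma}v_k(t)|_{L^1(\Gamma)}\le C\bigl(|v_k(t)|_{L^1(\Omega)}+\int_\Omega|\nabla v_k(t)|\,dx\bigr)$ together with a subsequence/domination argument — also $\int_I\int_\Gamma|v_k(t)-v(t)|\,d\mathcal{H}^{N-1}\,dt\to0$, so that $\int_I\bigl|\int_\Omega|\nabla v_k(t)|\,dx+\int_\Gamma|v_k(t)-\gamma_m|\,d\mathcal{H}^{N-1}-\int_{\overline{\Omega}}d[|Dv(t)|]_{\gamma_m}\bigr|\,dt\to0$ as $k\to\infty$. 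Next, for a fixed smooth $v_k$, I would run a boundary layer: let $d:=\mathrm{dist}(\cdot,\Gamma)$, smooth with $|\nabla d|=1$ on a one-sided collar $\{0<d<\rho_0\}$ of $\Gamma$, put $\chi_\rho:=(1-d/\rho)^+\in W^{1,\infty}(\Omega)$ for $0<\rho<\rho_0$, and set $w_{k,\rho}:=(1-\chi_\rho)v_k+\chi_\rho[\gamma_m]^{\rm hm}$, with $[\gamma_m]^{\rm hm}\in H^1(\Omega)$ the harmonic extension of $\gamma_m$ (Remark \ref{Rem.ex_hm}). Then $w_{k,\rho}\in L^\infty(I;H^1(\Omega))$, $w_{k,\rho}(t)=\gamma_m$ in $H^{1/2}(\Gamma)$ for every $t$, $w_{k,\rho}=v_k$ off the collar, $|w_{k,\rho}(t)-v_k(t)|_{L^2(\Omega)}^2\le\int_{\{d<\rho\}}|[\gamma_m]^{\rm hm}-v_k(t)|^2\,dx\to0$ uniformly in $t$, and — splitting $\int_\Omega|\nabla w_{k,\rho}(t)|\,dx$ over $\{d\ge\rho\}$ and the collar, using $\nabla\chi_\rho=-\rho^{-1}\nabla d$, and applying the coarea formula attached to the tubular diffeomorphism $(y,s)\mapsto y-s\,n_\Gamma(y)$ — one obtains $\sup_{t\in I}\bigl|\int_\Omega|\nabla w_{k,\rho}(t)|\,dx-\int_\Omega|\nabla v_k(t)|\,dx-\int_\Gamma|v_k(t)-\gamma_m|\,d\mathcal{H}^{N-1}\bigr|\to0$ as $\rho\downarrow0$. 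A diagonal choice — first $k$ large enough that $|v_k-v|_{L^2(I;L^2(\Omega))}$ and the displayed $L^1(I)$-quantity are $<1/(2m)$, then $\rho$ small enough that $|w_{k,\rho}-v_k|_{L^2(I;L^2(\Omega))}$ and $|I|$ times the last supremum are $<1/(2m)$ — produces the desired $w_m:=w_{k,\rho}$.

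\emph{Expected main obstacle.} The delicate point is the \emph{uniform-in-$t$} control of the boundary-layer gradient mass for a fixed smooth $v_k$: the slice-wise relaxation identity underlying (\hyperlink{Fact4}{Fact\,4})/Remark \ref{Rem.Fact4} has to be made uniform over $t\in I$, which I would derive from the tubular-neighbourhood coarea estimate $\rho^{-1}\int_{\{0<d<\rho\}}|[\gamma_m]^{\rm hm}-v_k(t)|\,dx\to\int_\Gamma|\gamma_m-v_k(t)|\,d\mathcal{H}^{N-1}$ holding uniformly in $t$, exploiting the uniform continuity of $v_k$ on the compact set $\overline{I\times\Omega}$ and the $L^1(\Gamma)$-trace property of $[\gamma_m]^{\rm hm}$ (as in \cite{MR0102739,MR2139257}). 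A secondary technicality is the subsequence-plus-domination step needed to promote the a.e.-$t$ trace convergence supplied by Remark \ref{Rem.trace} to the $L^1(I)$-statement used in the estimate above.
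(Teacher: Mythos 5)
Your proposal is correct, but it takes a genuinely different route from the paper. The paper's proof is a measurable-selection argument: it first establishes Borel measurability of $t\mapsto v(t)$, $t\mapsto|Dv(t)|(\Omega)$ and $t\mapsto v(t)_{|_\Gamma}$, then uses Lusin's theorem to extract compact sets $K_n\subset I$ on which $v$ and $[|Dv(\cdot)|]_\gamma(\overline{\Omega})$ are continuous, partitions $I$ dyadically, picks representative times $\bar\sigma^n_\ell$ in each cell, applies the slice-wise relaxation statement (\hyperlink{Fact4}{Fact\,4}) at those times with the datum $\gamma_n$ to get $\bar\varphi^n_\ell\in H^1(\Omega)$, and defines $v_n$ to be piecewise constant in time, equal to $\bar\varphi^n_\ell$ on $K^n_\ell$ and to $[\gamma_n]^{\rm hm}$ elsewhere; the two convergences in \eqref{v_n^circ02} are then checked by hand against the moduli coming from Lusin's theorem. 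Your argument replaces the whole selection machinery by (\hyperlink{Fact5}{Fact\,5}) (space--time strict approximation) followed by a one-shot boundary-layer modification $w_{k,\rho}=(1-\chi_\rho)v_k+\chi_\rho[\gamma_m]^{\rm hm}$ and a tubular-neighbourhood coarea estimate made uniform in $t$, and it transfers the boundary datum from $\gamma_m$ to $\gamma$ at the very end via the compact embedding $H^{1/2}(\Gamma)\hookrightarrow\hookrightarrow L^1(\Gamma)$ (the same embedding the paper also uses, but earlier in the proof). Each route buys something: yours gives a $v_n$ that is actually smooth in $(t,x)$ off the collar, avoids invoking (\hyperlink{Fact4}{Fact\,4}) and its proof infrastructure, and — notably — never uses the hypothesis $\gamma_n\in H^1(\Gamma)$, only $\gamma_n\in H^{1/2}(\Gamma)$, so it proves a mildly stronger statement; the paper's route avoids any quantitative tubular-neighbourhood geometry (only Lusin plus a black-boxed (\hyperlink{Fact4}{Fact\,4}) are needed) and gives a piecewise-constant-in-$t$ selection, which in their framework is all that the subsequent $\Gamma$-convergence lemmas require. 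The points you flag as delicate (uniform-in-$t$ coarea convergence, and the Pratt/Vitali step upgrading a.e.-$t$ trace convergence to an $L^1(I)$-statement) are exactly the two places your argument needs to be fleshed out, and both do go through — the first because $v_k\in C^\infty(\overline{I\times\Omega})$ makes the boundary-layer error $O(\rho)$ uniformly in $t$, the second by Pratt's lemma using the trace bound $|\mathfrak{tr}_\Gamma v_k(t)|_{L^1(\Gamma)}\le C\,|v_k(t)|_{BV(\Omega)}$ together with the $L^1(I)$-convergence $\int_\Omega|\nabla v_k(\cdot)|\,dx\to|Dv(\cdot)|(\Omega)$ supplied by (\hyperlink{Fact5}{Fact\,5}).
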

}
{
\noindent
\textbf{Proof.}
The proof of this Lemma is a modified version of those of \cite[Proposition 2.7.1]{Kenmochi81} and \cite[Lemma 5]{MR3268865}.

First, invoking the assumption $ v \in L^2(I; L^2(\Omega)) $ with $ |Dv({}\cdot{})|(\Omega) \in L^1(I) $, (\hyperlink{Fact5}{Fact\,5}), \cite[Theorem 3.88]{MR1857292}, and \cite[Theorem III.6.10]{MR0117523}, we can see that the functions in time:
\begin{equation*}
    t \in I \mapsto v(t) \in L^2(\Omega),~ t \in I \mapsto |Dv(t)|(\Omega) \in \R, \mbox{ and } t \in I \mapsto v(t)_{|_\Gamma} \in L^1(\Gamma),
\end{equation*}
are Borel measurable (cf. \cite[Section 2]{MR2072104}) from  $ I $ into the respective separable Banach spaces. Additionally, \eqref{Phi_gm(bt.)} allows us to say that
\begin{equation*}
    t \in I \mapsto [|Dv(t)|]_{\gamma_n}(\overline{\Omega}) = |Dv(t)|(\Omega) +|v_{|_\Gamma} -\gamma_n|_{L^1(\Gamma)} \in \R, ~ n = 1, 2, 3, \dots,
\end{equation*}
are Borel measurable.

Now, with the assumption \eqref{convGamma} in mind, we define
\begin{equation*}
    \bar{\gamma}_* := \sup \left\{ \begin{array}{l|l}
        |\gamma|_{H^{\frac{1}{2}}(\Gamma)}, |\gamma_n|_{H^{\frac{1}{2}}(\Gamma)} & n = 1, 2, 3, \dots
    \end{array} \right\} < \infty.
\end{equation*}
Then, applying Lusin's theorem, we find a sequence of compact sets $ \{ K_n \}_{n = 1}^\infty \subset 2^I $, such that:
\begin{equation*}
    \begin{array}{c}
    \left\{ \hspace{-2ex} \parbox{12cm}{
    \vspace{-2ex}
    \begin{itemize}
    \item $ K_n \subset K_{n +1} \subset I $,
    \item $ v \in C(K_n; L^2(\Omega)) $, $ [|Dv({}\cdot{})|]_{\gamma} \in C(K_n) $,
    \item $ |v|_{L^2(I \setminus K_n; L^2(\Omega))} \leq 2^{-(n +1)} $, $ \bigl| [|Dv({}\cdot{})|]_{\gamma}(\overline{\Omega}) \bigr|_{L^1(I \setminus K_n)} \leq 2^{-(n +1)} $,
    \\[1ex]
            $ \bigl( 1 +\mathcal{L}^N(\Omega) \bigr)\bigl( 1 +C_*^\mathrm{hm} \, \bar{\gamma}_* \bigr) \mathcal{L}^1(I \setminus K_n) \leq 2^{-(n +1)} $,
    \vspace{-2ex}
    \end{itemize}
    } \right.
        \ \\[-2ex]
        \ \\
    \mbox{for $ n = 1, 2, 3, \dots $.}
    \end{array}
\end{equation*}
Besides, we can find sequences $ \{ \delta_n \}_{n = 1}^\infty \subset (0, 1) $ and $ \{ m_n \}_{n = 1}^\infty \subset \N $, such that:
\begin{equation*}
\left\{ \hspace{-2ex} \parbox{9cm}{
\vspace{-2ex}
\begin{itemize}
\item $ 0 < \delta_{n +1} < \delta_n < 2^{-(n +1)} $,
\item $ |v(\tau) -v(\varsigma)|_{L^2(\Omega)} \leq 2^{-(n +1)} $ and
    \\[1ex]
        $ \bigl| [|Dv(\tau)|]_{\gamma}(\overline{\Omega}) -[|Dv(\varsigma)|]_{\gamma}(\overline{\Omega}) \bigr| \leq 2^{-(n +1)} $,
    \\[1ex]
    if $ \tau, \varsigma \in K_n $ and $ |\tau -\varsigma| \leq \delta_n $,
\item $ 2^n \leq m_n < m_{n +1}  $ and $ 2^{-m_n} \mathcal{L}^1(I) \leq \delta_n $,
\vspace{-2ex}
\end{itemize}
} \right. \mbox{ for $ n = 1, 2, 3, \dots $.}
\end{equation*}
Here, let us define a partition of time-interval by putting:
\begin{equation*}
t_\ell^n := \inf I +\ell 2^{-m_n} \mathcal{L}^1(I), \mbox{ for $ \ell = 0, 1, 2, \dots, 2^{m_n} $, $ n = 1, 2, 3, \dots $.}
\end{equation*}
Also, for any $ n \in \N $, let us define:
\begin{align*}
& K_{\ell}^n := K_n \cap [t_{\ell -1}^n, t_\ell^n), \mbox{ for $ \ell = 1, \dots, 2^{m_n} $,}
\\[1ex]
& L_n := \left\{ \begin{array}{l|l}
    \ell \in \N & 1 \leq \ell \leq 2^{m_n} \mbox{ and } K_{\ell}^n \ne \emptyset
\end{array} \right\},
    \\[1ex]
    & \bar{\sigma}_\ell^n := \min K_\ell^n \mbox{ for $ \ell \in L_n $.}
\end{align*}
Then, since
\begin{center}
    $ \gamma_n \in H^1(\Gamma) $ and $ \bar{\sigma}_\ell^n \in K_\ell^n $, for all $ n \in \N $ and $ \ell \in L_n $,
\end{center}
we can apply (\hyperlink{Fact4}{Fact\,4}). Thus, we find a sequence of functions $ \bigl\{ \bar{\varphi}_{\ell}^n \bigl| n \in \N, ~ \ell = 1, \dots, 2^{m_n}  \bigr\} \subset H^1(\Omega) $, such that:
\begin{equation*}
\begin{array}{c}
\left\{ \begin{array}{l}
    \ds  |\bar{\varphi}_{\ell}^n -v(\bar{\sigma}_{\ell}^n)|_{L^2(\Omega)} \leq 2^{-(n +1)},
\\[1ex]
    \ds \left| \int_\Omega |\nabla \bar{\varphi}_{\ell}^n| \, dx -\int_{\overline{\Omega}} d \bigl[ |D v(\bar{\sigma}_{\ell}^n)| \bigr]_{\gamma_n} \right| \leq 2^{-(n +1)},
\end{array} \right.
\quad
\mbox{for any $ n \in \N $ and any $ \ell \in L_n $.}
\end{array}
\end{equation*}

In view of these, we define a sequence $ \{ v_n \}_{n = 1}^\infty \subset L^\infty(I; H^1(\Omega)) $, as follows:
\begin{equation*}
v_n(t) := \left\{ \begin{array}{l}
\bar{\varphi}_{\ell}^n, \mbox{ if $ t \in K_{\ell}^n $ with some $ \ell \in L_n $,}
\\[1ex]
[\gamma_n]^{\rm hm}, \mbox{ otherwise,}
\end{array} \right. \mbox{ for $ n = 1, 2, 3, \dots $.}
\end{equation*}
Then, we immediately observe that this sequence $ \{ v_n \}_{n = 1}^\infty $ satisfies the assertion \eqref{v_n^circ01}. Meanwhile, by \eqref{convGamma}, the compactness of embedding $ H^{\frac{1}{2}}(\Gamma) \subset L^1(\Gamma) $ leads to:
\begin{equation*}
    \gamma_n \to \gamma \mbox{ in $ L^1(\Gamma) $, as $ n \to \infty $.}
\end{equation*}
the remaining assertion \eqref{v_n^circ02} can be verified as follows:
\begin{align*}
|v_n - & v|_{L^2(I; L^2(\Omega))}^2
 \\ \leq &  2 \sum_{\ell = 1}^{m_n} \int_{K_{\ell}^n} \bigl( |\bar{\varphi}_{j, \ell}^n -v(\bar{\sigma}_{j, \ell}^n)|_{L^2(\Omega)}^2 +|v(\bar{\sigma}_{j, \ell}^n) -v(t)|_{L^2(\Omega)}^2 \bigr) \, dt
\\
 & +  2 \int_{I \setminus K_n} \bigl(|v(t)|_{L^2(\Omega)}^2  +|[\gamma_n]^{\rm hm}|^2 \bigr) \, dt
\\
 \leq & 2^{-2n} \mathcal{L}^1(I)  +2 |v|_{L^2(I \setminus K_n; L^2(\Omega))}^2
\\  & + 2 |[\gamma_n]^\mathrm{hm}|_{H^{1}(\Omega)}^2 \cdot \frac{2^{-2(n +1)}}{\bigl( 1 +\mathcal{L}^N(\Omega) \bigr)^2 \bigl( 1 +C_*^\mathrm{hm}\,\bar{\gamma}_* \bigr)^2}
\\
    \leq & 2^{-2n} (1 +\mathcal{L}^1(I)) \to 0, \mbox{ as $ n \to \infty $,}
\end{align*}
and
\begin{align*}
    \int_I \left| \int_\Omega \right. & |\nabla v_n(t)| \, dx -\left. \int_{\overline{\Omega}} d \bigl[ |D v(t)| \bigr]_{\gamma}  \right| \, dt
\\
    \leq & \sum_{\ell = 1}^{m_n} \int_{K_{\ell}^n} \left| \int_\Omega |\nabla \bar{\varphi}_{\ell}^n| \, dx -\int_{\overline{\Omega}} d \bigl[ |D v(\bar{\sigma}_{\ell}^n)| \bigr]_{\gamma_n} \right| \, dt
\\
    & +\sum_{\ell = 1}^{m_n} \int_{K_{\ell}^n} \left| \bigl[ |Dv(\bar{\sigma}_{\ell}^n)| \bigr]_{\gamma_n}(\overline{\Omega}) -\bigl[ |Dv(\bar{\sigma}_{\ell}^n)| \bigr]_{\gamma}(\overline{\Omega}) \right| \, dt
\\
    & +\sum_{\ell = 1}^{m_n} \int_{K_{\ell}^n} \left| \bigl[ |Dv(\bar{\sigma}_{\ell}^n)| \bigr]_{\gamma}(\overline{\Omega}) -\bigl[ |Dv(t)| \bigr]_{\gamma}(\overline{\Omega}) \right| \, dt
\\
    & +\int_{I \setminus K_n} \left( \bigl[ |D v(t)| \bigr]_{\gamma}(\overline{\Omega}) +\int_\Omega |\nabla [\gamma]^{\rm hm}| \, dx \right) \, dt
\end{align*}
\begin{align*}
    \leq & 2^{-n} \mathcal{L}^1(I) +\sum_{\ell = 1}^{m_n} \left| |v(\bar{\sigma}_\ell^n)_{|_\Gamma} -\gamma_n|_{L^1(\Gamma)} -|v(\bar{\sigma}_\ell^n)_{|_\Gamma} -\gamma|_{L^1(\Gamma)} \right| \, dt
    \\
    &  +2^{-(n +1)} +\mathcal{L}^N(\Omega)^{\frac{1}{2}} |[\gamma]^\mathrm{hm}|_{H^1(\Omega)} \cdot \frac{2^{-(n +1)}}{\bigl( 1 +\mathcal{L}^N(\Omega) \bigr) \bigl( 1 +C_*^\mathrm{hm}\,\bar{\gamma}_* \bigr)}
\\
    \leq &  \bigl( 1 +\mathcal{L}^1(I) \bigr) \bigl( 2^{-n} +|\gamma_n -\gamma|_{L^1(\Gamma)}\bigr) \to 0, \mbox{ as $ n \to \infty $.}
\end{align*}

Thus, we conclude this Lemma.
\hfill $ \Box $
}
{
\begin{rem}\label{axRem_reg}
     Concerning the approximating sequence $ \{ v_n \}_{n = 1}^\infty \subset L^\infty(I; H^1(\Omega)) $ obtained in Lemma \ref{axLem_reg}, by \eqref{v_n^circ01} and \eqref{v_n^circ02} we can suppose the following pointwise convergences:
    \begin{equation}\label{v_n^circ03}
        \begin{array}{c}
            v_n(t) \to v(t) \mbox{ in $ L^2(\Omega) $ and } [|Dv_n(t)|]_{\gamma_n}(\overline{\Omega}) \to [|Dv(t)|]_{\gamma}(\overline{\Omega}),
            \\
            \mbox{for a.e. $ t \in I $, as $ n \to \infty $,}
        \end{array}
    \end{equation}
    by taking a subsequence if necessary.
\end{rem}
}
\bigskip

In what follows, we set, for any open set $ \Omega \subset \R^N $.
\begin{equation*}
\begin{array}{c}
\ds \mathscr{W}_{0}(I; \Omega) := \left\{ \begin{array}{l|l}
\beta \in L^2(I; L^2(\Omega)) &
\parbox{4.25cm}{
$ \beta(t) \in W_0(\Omega) $ a.e. $ t \in I $
}
\end{array} \right\},
\\[1ex]
\ds \mathscr{W}_{\rm c}(I; \Omega) := \left\{ \begin{array}{l|l}
\beta \in L^2(I; L^2(\Omega)) &
\parbox{4.25cm}{
$ \beta(t) \in W_{\rm c}(\Omega) $ a.e. $ t \in I $
}
\end{array} \right\}.
\end{array}
\end{equation*}

{On this basis, for any $\beta \in \mathscr{W}_{0}(I;\Omega)$ and any $ \gamma \in H^{\frac{1}{2}}(\Gamma) $, we define a functional $ \Phi_{\gamma}^I(\beta;{}\cdot{}) $ on $L^{2}(I;L^{2}(\Omega))$ by letting:
\begin{equation}\label{2.4-1}
    v \in L^{2}(I;L^{2}(\Omega)) \mapsto \Phi_{\gamma}^I(\beta; v) := \left \{
\begin{array}{ll}
    \ds \int_{I} \Phi_{\gamma}(\beta(t) ; v(t))\, dt,
    & \mbox{if } \Phi_{\gamma}(\beta; v) \in L^{1}(I),
\\[2ex]
\ds \infty, & \mbox{otherwise,}
\end{array}
\right.
\end{equation}
and additionally, we define a class $ \{ \Phi_{\nu, \gamma}^I(\beta;{}\cdot{}) \}_{\nu \in (0, 1)} $ of relaxed functionals of $ \Phi_{\gamma}^I(\beta;{}\cdot{}) $, by letting:
\begin{equation*}
    v \in L^{2}(I;L^{2}(\Omega)) \mapsto \Phi_{\nu, \gamma}^I(\beta; v) := \left \{
\begin{array}{ll}
    \ds \int_{I} \Phi_{\gamma}^\nu(\beta(t) ; v(t))\, dt,
    & \mbox{if } v \in L^{2}(I; H^1(\Omega)),
\\[2ex]
\ds \infty, & \mbox{otherwise.}
\end{array}
\right.
\end{equation*}
As is easily checked, the functionals $ \Phi_{\nu, \gamma}^I(\beta;{}\cdot{}) $, for every $ \beta \in \mathscr{W}_0(I; \Omega) $, $ \gamma \in H^{\frac{1}{2}}(\Gamma) $, and $ \nu \in (0, 1) $, are proper l.s.c. and convex on $ L^2(I; L^2(\Omega)) $.
}
\medskip

{
Now, for mathematical analysis under time-dependent situations, we prepare some key-Lemmas.
}

{
\begin{lem}\label{baseA}
    Let $ \gamma \in H^{\frac{1}{2}}(\Gamma) $ and $ \beta \in \mathscr{W}_0(I; \Omega) $. Then, the following items hold:
\begin{description}
        \hypertarget{baseA(I)}
    \item[\textmd{\it ({\,I\,})}]If $ v \in L^2(I; L^2(\Omega)) $ and $ |D v({}\cdot{})|(\Omega) \in L^1(I) $, then the function $ t \in I \mapsto $ \linebreak $ [\beta(t) |Dv(t)|]_{\gamma}(\overline{\Omega}) \in [0, \infty) $ is measurable.
        \hypertarget{baseA(II)}
\item[\textmd{\it ({II})}]If $ \beta \in \mathscr{W}_{\rm c}(I; \Omega) $ and  $ \log \beta \in  L^\infty(I \times \Omega) $, then $ \Phi^{I}_\gamma(\beta;{}\cdot{}) $ is a proper, l.s.c. and convex function on $ L^2(I; L^2(\Omega)) $, and
\begin{equation}\label{dom}
    D(\Phi^{I}_{\gamma}(\beta;{}\cdot{})) = \left\{ \begin{array}{l|l}
\tilde{v} \in L^2(I; L^2(\Omega)) & |D \tilde{v}({}\cdot{})|(\Omega) \in L^1(I)
\end{array} \right\}.
\end{equation}
\end{description}
\end{lem}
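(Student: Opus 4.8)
\medskip
\noindent\textbf{Proof proposal.}\quad
The plan is to prove item\,({I}) by a reduction to smooth functions via the strict approximation (\hyperlink{Fact5}{Fact\,5}), and then to deduce item\,({II}) from ({I}) together with the two-sided bound on $\beta$ and a Fatou argument. Throughout I will use that, by \eqref{01_[bt|Du|]_gm} with $B=\overline{\Omega}$ (whose exterior piece is void) and by \eqref{Phi_gm(bt.)},
\[
[\beta(t)|Dv(t)|]_{\gamma}(\overline{\Omega})=\int_{\Omega}[\beta(t)]^{*}\,d|Dv(t)|+\int_{\Gamma}\beta(t)\,|v(t)-\gamma|\,d\mathcal{H}^{N-1}=\Phi_{\gamma}(\beta(t);v(t)),
\]
a nonnegative (possibly infinite) quantity for a.e.\ $t$.

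For ({I}), fix $v\in L^{2}(I;L^{2}(\Omega))$ with $|Dv(\cdot)|(\Omega)\in L^{1}(I)$. By (\hyperlink{Fact5}{Fact\,5}) there is a sequence $\{v_{n}\}_{n=1}^{\infty}\subset C^{\infty}(\overline{I\times\Omega})$ with $v_{n}\to v$ in $L^{2}(I;L^{2}(\Omega))$ and, for a.e.\ $t\in I$, $v_{n}(t)\to v(t)$ in $L^{2}(\Omega)$ and $\int_{\Omega}|\nabla v_{n}(t)|\,dx\to\int_{\Omega}d|Dv(t)|$, so that $v_{n}(t)\to v(t)$ strictly in $BV(\Omega)$ for a.e.\ $t$. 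Since $\beta(t)\in W_{0}(\Omega)$ for a.e.\ $t$, (\hyperlink{Fact2}{Fact\,2}) applied for a.e.\ fixed $t$ yields $[\beta(t)|Dv_{n}(t)|]_{\gamma}(\overline{\Omega})\to[\beta(t)|Dv(t)|]_{\gamma}(\overline{\Omega})$ a.e.\ $t$, so it suffices to show that $t\mapsto[\beta(t)|Dv_{n}(t)|]_{\gamma}(\overline{\Omega})$ is measurable for each smooth $v_{n}$. For such $v_{n}$, $Dv_{n}(t)=\nabla v_{n}(t)\,\mathcal{L}^{N}$, whence $[\beta(t)|Dv_{n}(t)|]_{\gamma}(\overline{\Omega})=\int_{\Omega}\beta(t)\,|\nabla v_{n}(t)|\,dx+\int_{\Gamma}\beta(t)|_{\Gamma}\,|v_{n}(t)|_{\Gamma}-\gamma|\,d\mathcal{H}^{N-1}$. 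The first term is measurable in $t$ by Fubini's theorem, since $\beta\in L^{2}(I\times\Omega)$ and $(t,x)\mapsto|\nabla v_{n}(t,x)|$ is continuous and bounded on $\overline{I\times\Omega}$. For the second term, $t\mapsto\beta(t)$, being measurable from $I$ into $L^{2}(\Omega)$ with a.e.\ values in $H^{1}(\Omega)$, is also measurable from $I$ into $H^{1}(\Omega)$ (the Borel $\sigma$-algebra of the separable space $H^{1}(\Omega)$ is the trace on $H^{1}(\Omega)$ of that of $L^{2}(\Omega)$; cf.\ the measurability argument in the proof of Lemma \ref{axLem_reg} and \cite[Theorem III.6.10]{MR0117523}); composing with the bounded trace operator $H^{1}(\Omega)\to L^{2}(\Gamma)$ makes $t\mapsto\beta(t)|_{\Gamma}\in L^{2}(\Gamma)$ measurable, while $t\mapsto v_{n}(t)|_{\Gamma}\in C(\Gamma)\subset L^{2}(\Gamma)$ is continuous; since $(f,g)\mapsto\int_{\Gamma}f\,|g|\,d\mathcal{H}^{N-1}$ is continuous on $L^{2}(\Gamma)\times L^{2}(\Gamma)$, the second term is measurable in $t$, which proves ({I}).

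For ({II}), assume $\beta\in\mathscr{W}_{\rm c}(I;\Omega)$ with $\log\beta\in L^{\infty}(I\times\Omega)$, so that $0<\delta_{*}\le\beta\le M_{*}<\infty$ a.e.\ on $I\times\Omega$ for suitable constants, and also $\delta_{*}\le[\beta(t)]^{*}\le M_{*}$ holds $|Dv(t)|$-a.e.\ in $\Omega$ and $\mathcal{H}^{N-1}$-a.e.\ on $\Gamma$. Hence $\delta_{*}\,\kappa(t)\le\Phi_{\gamma}(\beta(t);v(t))\le M_{*}\,\kappa(t)$ with $\kappa(t):=|Dv(t)|(\Omega)+|v(t)|_{\Gamma}-\gamma|_{L^{1}(\Gamma)}$. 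Using the boundedness of the $BV$-trace and $v\in L^{2}(I;L^{2}(\Omega))$, one sees that $\kappa\in L^{1}(I)$ whenever $|Dv(\cdot)|(\Omega)\in L^{1}(I)$, while the left inequality forces $|Dv(\cdot)|(\Omega)\in L^{1}(I)$ whenever $\Phi_{\gamma}(\beta(\cdot);v(\cdot))\in L^{1}(I)$; together with the measurability from ({I}) this gives the domain identity \eqref{dom}, and since $\Phi_{\gamma}^{I}(\beta;\cdot)\ge0$ and its domain is nonempty (it contains all constants), $\Phi_{\gamma}^{I}(\beta;\cdot)$ is proper. Convexity follows by integrating in $t$ the convexity of each $\Phi_{\gamma}(\beta(t);\cdot)$ on $L^{2}(\Omega)$ noted after \eqref{Phi_gm(bt.)}. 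For lower semicontinuity, let $v_{n}\to v$ in $L^{2}(I;L^{2}(\Omega))$ with $L:=\liminf_{n}\Phi_{\gamma}^{I}(\beta;v_{n})<\infty$; pass to a subsequence realizing $L$ and, further, along which $v_{n}(t)\to v(t)$ in $L^{2}(\Omega)$ for a.e.\ $t$. The bound $\delta_{*}|Dv_{n}(t)|(\Omega)\le\Phi_{\gamma}(\beta(t);v_{n}(t))$ with lower semicontinuity of the total variation yields $|Dv(\cdot)|(\Omega)\in L^{1}(I)$, so ({I}) renders all the relevant integrands measurable; the lower semicontinuity of $\Phi_{\gamma}(\beta(t);\cdot)$ on $L^{2}(\Omega)$ gives $\liminf_{n}\Phi_{\gamma}(\beta(t);v_{n}(t))\ge\Phi_{\gamma}(\beta(t);v(t))$ a.e.\ $t$; and Fatou's lemma (nonnegative integrands) gives $L\ge\int_{I}\Phi_{\gamma}(\beta(t);v(t))\,dt$, which is finite, so $v\in D(\Phi_{\gamma}^{I}(\beta;\cdot))$ and $\Phi_{\gamma}^{I}(\beta;v)\le L$, as required.

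The step I expect to be the main obstacle is the measurability assertion in ({I}): because $\beta$ is only assumed measurable into $L^{2}(\Omega)$ with a.e.\ $H^{1}(\Omega)$-values, the boundary trace $\beta(t)|_{\Gamma}$ acquires a measurable meaning only after upgrading to $H^{1}(\Omega)$-measurability, and the reduction to smooth $v_{n}$ must be carried out pointwise in $t$ — which is precisely what the pointwise refinements of (\hyperlink{Fact5}{Fact\,5}) together with the continuity property (\hyperlink{Fact2}{Fact\,2}) (itself resting on Remark \ref{Rem.trace}) are designed to supply. Once ({I}) is in hand, the domain characterization, convexity, and Fatou-type lower semicontinuity of ({II}) are routine.
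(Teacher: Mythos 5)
Your proof is correct, but takes a genuinely different route from the paper's on both halves. For item\,({I}), the paper invokes its $\gamma$-adapted strict approximation, Lemma \ref{axLem_reg}, to produce time-piecewise-constant $H^1(\Omega)$-valued approximants $v_n$ whose trace is (close to) $\gamma$, so that $[\beta(t)|Dv_n(t)|]_{\gamma_n}(\overline{\Omega})$ collapses to the weighted Dirichlet integral $\int_\Omega\beta(t)|\nabla v_n(t)|\,dx$ of a time-step function, whose measurability is immediate; the representation of (I) then follows from the pointwise convergences \eqref{v_n^circ03}. You instead use the unconstrained strict approximation (\hyperlink{Fact5}{Fact\,5}) by space-time smooth $v_n$, apply (\hyperlink{Fact2}{Fact\,2}) pointwise in $t$, and handle the measurability of the smooth integrand directly, splitting into an interior piece (Fubini) and a boundary piece (measurability of $t\mapsto\beta(t)$ into $H^1(\Omega)$ followed by the trace). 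Both routes are valid; yours dispenses with Lemma \ref{axLem_reg} and the auxiliary sequence $\gamma_n\in H^1(\Gamma)$, at the cost of needing the $L^2$-to-$H^1$ measurability upgrade for $\beta$, an upgrade the paper itself uses inside the proof of Lemma \ref{axLem_reg}. For item\,({II}), the paper establishes lower semicontinuity by quoting the external result \cite[Lemma 4.2]{MR3268865} applied to the extended functional on $\bB_\Omega$ and subtracting the constant exterior term, whereas you exploit the two-sided bound $0<\delta_*\le\beta\le M_*$ (available since $\log\beta\in L^\infty(I\times\Omega)$, and inherited by $[\beta(t)]^*$ up to $\mathcal{H}^{N-1}$-null sets, which $|Dv(t)|$ does not charge) together with pointwise lower semicontinuity of $\Phi_\gamma(\beta(t);\cdot)$ and Fatou's lemma — in effect re-proving the relevant special case of the paper's Lemma \ref{Lem.LB}. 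Your argument is more self-contained; the paper's economizes on duplication by delegating to the cited lemma.
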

}
{
\noindent
\textbf{Proof.}
First, we verify (I). We take approximating sequences $ \{ \gamma_n \}_{n = 1}^\infty \subset H^1(\Gamma) $ and $ \{ v_n \}_{n = 1}^\infty \subset L^\infty(I; H^1(\Omega)) $, as in Lemma \ref{axLem_reg} and Remark \ref{axRem_reg}. Then, from \eqref{v_n^circ03}, we have
\begin{align*}
    \ds [\beta(t) |Dv(t)|]_{\gamma}(\overline{\Omega})
&~ \ds = \lim_{n \to \infty}  \int_\Omega \beta(t) |\nabla v_n(t)| \, dx, \mbox{ \ for a.e. $ t \in I $.}
\end{align*}
This implies the validity of (\,I\,).

Meanwhile, in the light of \eqref{01_[bt|Du|]_gm}, \eqref{02_[bt|Du|]_gm}, and (\ref{Phi_gm(bt.)}), we immediately verify the assertions in (II) except for the lower semi-continuity of $ \Phi^{I}_{\gamma}(\beta;{}\cdot{}) $.

Now, for the verification of lower semi-continuity, we take any $ w \in D(\Phi^{I}_{\gamma}(\beta;{}\cdot{})) $ and any sequence $ \{ w_n \}_{n = 1}^\infty \subset D(\Phi^{I}_{\gamma}(\beta;{}\cdot{})) $ such that $ w_n \to w $ in $ L^2(I; L^2(\Omega)) $ as $ n \to \infty $. Then, applying  \cite[Lemma 4.2]{MR3268865}, we immediately see the lower semi-continuity of the functional:
\begin{equation*}
\tilde{v} \in L^2(I; L^2(\Omega)) \mapsto \int_I [\beta]^{\rm ex}(t)|D[\tilde{v}(t)]_\gamma^{\rm ex}|(\bB_{\Omega}) \, dt. \
\end{equation*}
Having these in mind, the lower semi-continuity of $ \Phi^{I}_{\gamma}(\beta;{}\cdot{}) $ is verified as follows.
\begin{eqnarray*}
    \liminf_{n \to \infty} \Phi^{I}_{\gamma}(\beta; w_n) & = & \liminf_{n \to \infty} \int_I [\beta(t)]^{\rm ex}|D[
\omega_n(t)]_\gamma^{\rm ex}|(\bB_\Omega) \, dt
\\
&& \qquad - \int_I [\beta(t)]^{\rm ex} d |\nabla [\gamma]^{\rm ex}|(\bB_\Omega \setminus \Omega) \, dt
\\
& \geq & \int_I [\beta(t)]^{\rm ex}|D[w(t)]_\gamma^{\rm ex}|(\bB_\Omega) \, dt
\\
&& \qquad - \int_I [\beta(t)]^{\rm ex} d |\nabla [\gamma]^{\rm ex}|(\bB_\Omega \setminus \Omega) \, dt
\\
    & = & \Phi^{I}_{\gamma}(\beta; w).
\end{eqnarray*}
$ \Box $
}
{
\begin{lem}\label{Lem.LB}
    Let $ \beta \in \mathscr{W}_{0}(I; \Omega)\cap C(\overline I;L^2(\Omega)) $, $ \{ \beta_n \}_{n = 1}^\infty \subset \mathscr{W}_0(I; \Omega)\cap C(\overline I;L^2(\Omega)) $, $ \gamma \in H^{\frac{1}{2}}(\Gamma) $ and $ \{ \gamma_n \}_{n = 1}^\infty \subset H^{\frac{1}{2}}(\Gamma) $ be such that
\begin{equation}\label{core01}
\left\{ \parbox{10cm}{
\begin{tabular}{ll}
$ \beta_n \to \beta $ & in $ L^2(I; L^2(\Omega)) $,  weakly in $ W^{1, 2}(I; L^2(\Omega)) $,
\\
& and weakly in $ L^2(I; H^1(\Omega)) $,
\\[1ex]
$ \gamma_n \to \gamma $ & in $ H^{\frac{1}{2}}(\Gamma) $,
\end{tabular}
} \right.
\mbox{as $ n \to \infty $.}
\end{equation}
Let $ v \in L^2(I; L^2(\Omega)) $ and $ \{ v_n \}_{n = 1}^\infty \subset L^2(I; L^2(\Omega)) $ be such that
\begin{equation}\label{core02}
\left\{ ~ \parbox{12.75cm}{
$ |Dv({}\cdot{})|(\Omega) \in L^1(I) $, $ \{ |D v_n({}\cdot{})|(\Omega) \}_{n = 1}^\infty \subset L^1(I) $,
\\[1ex]
$ v_n(t) \to v(t) $ in $ L^2(\Omega) $ and weakly-$*$ in $ BV(\Omega) $, a.e. $ t \in I $, as $ n \to \infty $.
} \right.
\end{equation}
Here, if:
\begin{equation}\label{core03}
\beta \in \mathscr{W}_{\rm c}(I; \Omega), \mbox{ or } L_* := \sup_{n \in \N}  \bigl| |D v_n({}\cdot{})|(\Omega) \bigr|_{L^1(I)} < \infty,
\end{equation}
then:
\begin{equation*}
\liminf_{n \to \infty} \int_I [ \beta_n(t) |Dv_n(t)| ]_{\gamma_n}(\overline{\Omega}) \, dt \geq \int_I [ \beta(t) |Dv(t)| ]_{\gamma}(\overline{\Omega}) \, dt.
\end{equation*}
\end{lem}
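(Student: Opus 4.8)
The plan is to reduce the inequality, via Fatou's lemma, to a pointwise‑in‑time lower‑semicontinuity statement for a weighted total variation, and then to establish the latter from the dual representation of $[\beta|Dv|]_{\gamma}$ in (Fact\,3) together with the identity \eqref{02_[bt|Du|]_gm}.

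By Lemma~\ref{baseA}\,(I) each map $t\mapsto[\beta_{n}(t)|Dv_{n}(t)|]_{\gamma_{n}}(\overline{\Omega})$ is measurable, because \eqref{core02} supplies $v_{n}\in L^{2}(I;L^{2}(\Omega))$ with $|Dv_{n}(\cdot)|(\Omega)\in L^{1}(I)$; these maps are also nonnegative. I would pass to a subsequence $(n_{k})$ realizing the $\liminf$ of the time integrals, refine it so that in addition $\beta_{n_{k}}(t)\to\beta(t)$ in $L^{2}(\Omega)$ for a.e.\ $t\in I$ (the a.e.\ convergences of $v_{n},\gamma_{n}$ survive any passage to subsequences), and apply Fatou's lemma; this reduces the claim to proving, for a.e.\ $t\in I$,
\[
\liminf_{k\to\infty}[\beta_{n_{k}}(t)|Dv_{n_{k}}(t)|]_{\gamma_{n_{k}}}(\overline{\Omega})\ \geq\ [\beta(t)|Dv(t)|]_{\gamma}(\overline{\Omega}).
\]

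Now I fix such a $t$ and drop it from the notation. Using \eqref{01_[bt|Du|]_gm}--\eqref{02_[bt|Du|]_gm} I write $[\beta_{n}|Dv_{n}|]_{\gamma_{n}}(\overline{\Omega})=[\beta_{n}]^{\rm ex}\,|D[v_{n}]_{\gamma_{n}}^{\rm ex}|(\bB_{\Omega})-\int_{\bB_{\Omega}\setminus\overline{\Omega}}[\beta_{n}]^{\rm ex}\,|D[\gamma_{n}]^{\rm ex}|\,dx$, choosing $[\gamma_{n}]^{\rm ex}$ as the harmonic extension across the collar $\bB_{\Omega}\setminus\overline{\Omega}$ and $[\beta_{n}]^{\rm ex}$ as the (suitably truncated) harmonic extension of the trace of $\beta_{n}$ — legitimate since the value of $[\beta_{n}|Dv_{n}|]_{\gamma_{n}}$ does not depend on the extensions. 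Since $\gamma_{n}\to\gamma$ in $H^{\frac{1}{2}}(\Gamma)$ and the extension operators of Remarks~\ref{Rem.ext} and~\ref{Rem.ex_hm} are bounded, the two correction integrals converge, so everything reduces to the lower semicontinuity $\liminf_{n}[\beta_{n}]^{\rm ex}|D[v_{n}]_{\gamma_{n}}^{\rm ex}|(\bB_{\Omega})\geq[\beta]^{\rm ex}|D[v]_{\gamma}^{\rm ex}|(\bB_{\Omega})$ of a weighted total variation, with $[v_{n}]_{\gamma_{n}}^{\rm ex}\to[v]_{\gamma}^{\rm ex}$ in $L^{2}(\bB_{\Omega})$ (from $v_{n}\to v$ in $L^{2}(\Omega)$ and $[\gamma_{n}]^{\rm ex}\to[\gamma]^{\rm ex}$ in $H^{1}$ on the collar) and, by either alternative in \eqref{core03}, the variations $|D[v_{n}]_{\gamma_{n}}^{\rm ex}|(\bB_{\Omega})$ bounded, hence $[v_{n}]_{\gamma_{n}}^{\rm ex}\to[v]_{\gamma}^{\rm ex}$ weakly‑$*$ in $BV(\bB_{\Omega})$.

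For this weighted total variation I would use the dual representation (Fact\,3): for $\varepsilon>0$ pick a test field $\varphi$ that is $\varepsilon$‑optimal for $[\beta]^{\rm ex}|D[v]_{\gamma}^{\rm ex}|(\bB_{\Omega})$, and insert into $[\beta_{n}]^{\rm ex}|D[v_{n}]_{\gamma_{n}}^{\rm ex}|(\bB_{\Omega})\geq\int_{\bB_{\Omega}}[v_{n}]_{\gamma_{n}}^{\rm ex}\,{\rm div}\,\psi_{n}\,dx$ a competitor $\psi_{n}$ built from $\varphi$ and $\beta_{n}$: when $\beta\in\mathscr{W}_{\rm c}(I;\Omega)$, first reduce — via the exhaustion $I=\bigcup_{m}\{t:e^{-m}\leq\beta(t,\cdot)\leq e^{m}\}$ and monotone convergence — to the case $e^{-m_{0}}\leq[\beta]^{\rm ex}\leq e^{m_{0}}$ and take $\psi_{n}=\bigl(([\beta_{n}]^{\rm ex}\wedge M)/M\bigr)\varphi$; when $L_{*}<\infty$, keep $|\psi_{n}|\leq[\beta_{n}]^{\rm ex}$ directly, arguing as in the Reshetnyak‑type \cite[Lemma~4.2]{MR3268865}. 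Passing to the limit — with $[v_{n}]_{\gamma_{n}}^{\rm ex}\varphi\to[v]_{\gamma}^{\rm ex}\varphi$ strongly in $L^{2}(\bB_{\Omega})$ and $\nabla[\beta_{n}]^{\rm ex}\rightharpoonup\nabla[\beta]^{\rm ex}$ weakly in $L^{2}(\bB_{\Omega})$ (a further extraction, harmless at this pointwise stage) — then letting $\varepsilon\downarrow0$ and taking the supremum over $\varphi$ finishes the proof. The genuine obstacle is precisely this last step: because $\beta_{n}\to\beta$ only in $L^{2}$, not uniformly, and the $\beta_{n}$ may degenerate, the optimal test field of the limit is not admissible for $\beta_{n}$, and ${\rm div}(\beta_{n}\varphi)$ brings in $\nabla\beta_{n}$ — the two hypotheses in \eqref{core03} are exactly what one exploits to circumvent this, a uniform lower bound on the weight (from $\log\beta\in L^{\infty}$) making the truncation legitimate, and a uniform $L^{1}$‑in‑time bound on the total variations supplying the $BV$‑compactness that renders it unnecessary.
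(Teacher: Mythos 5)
You take a genuinely different route from the paper. The paper reduces, via the identity \eqref{02_[bt|Du|]_gm} and the extension operators of Remark~\ref{Rem.ext}, to a weighted total variation on the fixed open ball $\bB_\Omega$, and then invokes the time-integrated lower-semicontinuity result \cite[Lemma~4.3]{MR3670006} directly, never leaving the time-integral formulation. You instead propose a Fatou reduction to a pointwise-in-time lower-semicontinuity statement at a fixed $t$, and then a hands-on proof via the dual representation (\hyperlink{Fact3}{Fact\,3}). This is not what the paper does, and the Fatou step is where your argument develops a genuine gap.

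The gap is the pointwise-in-time control of $\beta_n$. After Fatou, you need: for a.e.\ fixed $t\in I$, along any sub-subsequence realizing $\liminf_k[\beta_{n_k}(t)|Dv_{n_k}(t)|]_{\gamma_{n_k}}(\overline\Omega)$, one has $[\beta_{n_k}(t)]^{\rm ex}\rightharpoonup[\beta(t)]^{\rm ex}$ weakly in $H^1$. But the hypothesis \eqref{core01} only gives $\beta_n\rightharpoonup\beta$ weakly in $L^2(I;H^1(\Omega))$; this bounds $\int_I|\nabla\beta_n(t)|_{L^2}^2\,dt$, not $|\nabla\beta_n(t)|_{L^2}$ at a fixed $t$. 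The parenthetical ``a further extraction, harmless at this pointwise stage'' is exactly the unjustified step: the sub-subsequence along which $\beta_n(t)$ stays $H^1$-bounded at your fixed $t$ (if it exists at all for that $t$) need not be compatible with the sub-subsequence realizing the pointwise $\liminf$ of the target quantity. Without weak $H^1$-convergence of the weights at fixed $t$, you cannot pass to the limit in $\int[v_n]^{\rm ex}{\rm div}(\beta_n\varphi)\,dx$ (the terms involving $\nabla\beta_n(t)$ are uncontrolled), and $L^2$-convergence of weights alone is insufficient for lower semicontinuity of a weighted total variation (a weight converging in $L^2$ but developing an $H^1$-unbounded spike at a jump point of $v$ destroys the inequality). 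The paper's strategy of keeping the time integral intact and quoting \cite[Lemma~4.3]{MR3670006}, whose hypotheses match the $L^2(I;H^1)$-weak convergence exactly, avoids this commitment.

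Two smaller points. The boundedness of $|Dv_n(t)|(\Omega)$ at a fixed $t$, which you attribute to \eqref{core03}, is already automatic from the weak-$*$ $BV(\Omega)$ convergence in \eqref{core02}; \eqref{core03} is a uniform-in-$t$ (integral) bound, used for other purposes. And the proposed exhaustion $I=\bigcup_m\{t:e^{-m}\leq\beta(t,\cdot)\leq e^m\}$ is out of place once you have fixed $t$: if $\beta(t)\in W_{\rm c}(\Omega)$, a single $m_0(t)$ already works, so this part reads like an unresolved oscillation between the integrated and the pointwise formulations of the claim.
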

}
{
\noindent
\textbf{Proof. }
It is enough to consider only the case when:
\begin{equation*}
\Phi_* := \liminf_{n \to \infty} \int_I [ \beta_n(t) |Dv_n(t)| ]_{\gamma_n}(\overline{\Omega}) \, dt < \infty,
\end{equation*}
because the other case is trivial.

In this case, we find a subsequence $ \{ n_k \}_{k = 1}^\infty \subset \{n\}_{n = 1}^\infty $, such that:
\begin{equation*}
\Phi_*  = \lim_{k \to \infty} \int_I [ \beta_{n_k}(t) |Dv_{n_k}(t)| ]_{\gamma_{n_k}}(\overline{\Omega}) \, dt.
\end{equation*}
Additionally, by Remark \ref{Rem.ext} and the compactness theory of Aubin's type \cite{MR0916688},  assumptions \eqref{core01}--\eqref{core03} enable us to suppose the following properties for the subsequences:
\begin{equation}\label{core06}
\left\{ \parbox{8.75cm}{
\begin{tabular}{lll}
$ [\beta_{n_k}]^{\rm ex} \to [\beta]^{\rm ex} $ & \hspace{-1ex}in& \hspace{-1ex}$ L^2(I; L^2(\bB_\Omega)) $,
\\
\hfill weakly  & \hspace{-1ex}in& \hspace{-1ex}$ W^{1, 2}(I; L^2(\R^N)) $,
\\
\hfill and weakly & \hspace{-1ex}in & \hspace{-1ex}$ L^2(I; H^1(\R^N) $,
\\[1ex]
    $ [\gamma_{n_k}]^{\rm ex} \to [\gamma]^{\rm ex} $ & \hspace{-1ex}in & \hspace{-1ex}$ W^{1, 1}(\bB_\Omega) $, and in $ H^1(\R^N) $,
\end{tabular}
} \right. \mbox{as $ k \to \infty $,}
\end{equation}
\begin{equation}\label{core07}
\left\{ \parbox{7.25cm}{
\begin{tabular}{lll}
$ [\beta_{n_k}(t)]^{\rm ex} \to [\beta(t)]^{\rm ex} $ & \hspace{-1ex}in& \hspace{-1ex}$ L^2(\bB_\Omega) $
\\
\hfill and weakly & \hspace{-1ex}in& \hspace{-1ex}$ H^1(\R^N) $,
\\[1ex]
$ [v_{n_k}(t)]_{\gamma_{n_k}}^{\rm ex} \to [v(t)]_{\gamma}^{\rm ex} $ & \hspace{-1ex}in& \hspace{-1ex}$ L^2(\bB_\Omega) $,
\end{tabular}
} \right. \mbox{as $ k \to \infty $, for a.e. $ t \in I $,}
\end{equation}
and
\begin{equation}\label{core08}
\left\{ \hspace{-2.5ex} \parbox{10.25cm}{
\vspace{-2ex}
\begin{itemize}
\item $ [\beta]^{\rm ex} \in \mathscr{W}_{\rm c}(I; \R^N) $, if $ \beta \in \mathscr{W}_{\rm c}(I; \Omega) $,
\item $ L_*^{\rm ex} := \ds \sup_{n \in \N} \, \bigl| |D[v_{n}(\cdot)]_{\gamma_{n}}^{\rm ex}|(\bB_\Omega) \bigr|_{L^1(I)} < \infty $, if $ L_* < \infty $.
\vspace{-2ex}
\end{itemize}
} \right.
\end{equation}
Taking into account \eqref{02_[bt|Du|]_gm} and \eqref{core06}--\eqref{core08}, we can apply \cite[Lemma 4.3]{MR3670006} to deduce that:
\begin{eqnarray*}
\Phi_* & = & \liminf_{k \to \infty} \int_I [\beta_{n_k}(t)]^{\rm ex}\bigl| D[{v}_{n_k}(t)]_{\gamma_{n_k}}^{\rm ex} \bigr|(\bB_\Omega) \, dt
\\
&& \qquad -\lim_{k \to \infty} \int_I [\beta_{n_k}(t)]^{\rm ex} d |\nabla [\gamma_{n_k}]^{\rm ex}|(\bB_\Omega \setminus \Omega) \, dt
\\
& \geq & \int_I [\beta(t)]^{\rm ex}\bigl| D[{v}(t)]_{\gamma}^{\rm ex} \bigr| (\bB_\Omega) \, dt
\\
&& \qquad - \int_I [\beta(t)]^{\rm ex} d |\nabla [\gamma]^{\rm ex}|(\bB_\Omega \setminus \Omega) \, dt
\\
& = & \int_I [\beta(t) |Dv(t)|]_{\gamma}(\overline{\Omega}) \, dt.
\end{eqnarray*}
$ \Box $
}

{
\begin{lem}\label{Lem.core}
    Let $ \beta \in \mathscr{W}_{\rm c}(I; \Omega)\cap C(\overline I;L^2(\Omega)) $, $ \{ \beta_n \}_{n = 1}^\infty \subset \mathscr{W}_{c}(I; \Omega)\cap C(\overline I;L^2(\Omega)) $, $ \gamma \in H^{\frac{1}{2}}(\Gamma) $, $ \{ \gamma_n \}_{n = 1}^\infty \subset H^{\frac{1}{2}}(\Gamma) $, $ v \in L^2(I; L^2(\Omega)) $ and $ \{ v_n \}_{n = 1}^\infty \subset L^2(I; L^2(\Omega)) $ be such that the conditions \eqref{core01} and \eqref{core02} are fulfilled, and
\begin{equation}\label{core10}
\beta \geq \delta_0 \mbox{ and } \beta_n \geq \delta_0 \mbox{ a.e. in $ I \times \Omega $, for some constant $ \delta_0 > 0 $.}
\end{equation}
Let $ \varrho \in L^\infty(I; H^1(\Omega)) \cap L^\infty(I \times \Omega) $ and $ \{ \varrho_n \}_{n = 1}^\infty \subset L^\infty(I; H^1(\Omega)) \cap L^\infty(I \times \Omega) $ be such that
\begin{equation}\label{core11}
\begin{array}{ll}
\varrho_n \to \varrho & \mbox{in $ L^2(I; L^2(\Omega)) $, weakly in $ W^{1, 2}(I; L^2(\Omega)) $,}
\\
& \mbox{and weakly in $ L^2(I; H^1(\Omega)) $, \ as $ n \to \infty $,}
\end{array}
\end{equation}
and
\begin{equation*}
M_0 := \sup \left\{ \begin{array}{l|l}
|\varrho|_{L^\infty(I \times \Omega)}, |\varrho_n|_{L^\infty(I \times \Omega)} & n \in \N
\end{array} \right\} < \infty.
\end{equation*}
Besides, let us assume that
\begin{equation}\label{core13}
\int_I [\beta_n(t)|D v_n(t)|]_{\gamma_n}(\overline{\Omega}) \, dt \to \int_I [\beta(t)|D v(t)|]_{\gamma}(\overline{\Omega}) \, dt, \mbox{ as $ n \to \infty $.}
\end{equation}
Then, it holds that:
\begin{equation}\label{core14}
\int_I [\varrho_n(t)|D v_n(t)|]_{\gamma_n}(\overline{\Omega}) \, dt \to \int_I [\varrho(t)|D v(t)|]_{\gamma}(\overline{\Omega}) \, dt, \mbox{ as $ n \to \infty $.}
\end{equation}
\end{lem}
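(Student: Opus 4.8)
The plan is to prove the two one-sided estimates $\limsup_{n\to\infty}B_n\le B$ and $\liminf_{n\to\infty}B_n\ge B$, where $B_n:=\int_I[\varrho_n(t)|Dv_n(t)|]_{\gamma_n}(\overline\Omega)\,dt$ and $B:=\int_I[\varrho(t)|Dv(t)|]_{\gamma}(\overline\Omega)\,dt$, and to obtain each of them by applying the lower-semicontinuity Lemma \ref{Lem.LB} — not to $\{\varrho_n\}$ itself (which need not be bounded below by a positive constant, nor even nonnegative), but to suitable \emph{nonnegative} linear combinations of $\{\beta_n\}$ and $\{\varrho_n\}$. Two features make this work: on $\overline\Omega$ the quantity $[\beta|Du|]_{\gamma}(\overline\Omega)=\int_\Omega[\beta]^*\,d|Du|+\int_\Gamma\beta\,|u-\gamma|\,d\mathcal H^{N-1}$ is \emph{linear} in the weight $\beta$ (the extension term of \eqref{01_[bt|Du|]_gm} drops out since $\overline\Omega\setminus\overline\Omega=\emptyset$); and the \emph{equality} \eqref{core13} is exactly what converts the two ``$\liminf$'' bounds produced by Lemma \ref{Lem.LB} into the matching ``$\limsup$''.

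First I would record two preliminary facts. (i) By \eqref{core10}, $[\beta_n(t)|Dv_n(t)|]_{\gamma_n}(\overline\Omega)\ge\delta_0\,|Dv_n(t)|(\Omega)$; integrating over $I$ and using that the right-hand side of \eqref{core13} is finite gives $L_*:=\sup_n\bigl\||Dv_n(\cdot)|(\Omega)\bigr\|_{L^1(I)}<\infty$, so the second alternative in hypothesis \eqref{core03} of Lemma \ref{Lem.LB} is automatically available for every auxiliary weight family below. (ii) For the constant weight $\mathbf 1\equiv 1\in\mathscr W_{\rm c}(I;\Omega)$, writing $D_n:=\int_I[\mathbf 1|Dv_n(t)|]_{\gamma_n}(\overline\Omega)\,dt$ and $D:=\int_I[\mathbf 1|Dv(t)|]_{\gamma}(\overline\Omega)\,dt$, I would show $D_n\to D$: $\liminf_n D_n\ge D$ is Lemma \ref{Lem.LB} with $\beta_n=\beta=\mathbf 1$, while $\limsup_n D_n\le D$ comes from the $\limsup$-argument below applied with $\varrho_n$ replaced by $\mathbf 1$ and constant $1/\delta_0$ (note $\tfrac1{\delta_0}\beta_n-\mathbf 1\ge 0$).

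For the $\limsup$ estimate, put $c:=M_0/\delta_0$ and consider the family $\zeta_n:=c\beta_n-\varrho_n$, $\zeta:=c\beta-\varrho$. Then $\zeta_n\ge c\delta_0-M_0=0$ a.e.\ in $I\times\Omega$; each $\zeta_n(t)\in H^1(\Omega)\cap L^\infty(\Omega)$ and $\zeta_n\in W^{1,2}(I;L^2(\Omega))\subset C(\overline I;L^2(\Omega))$, so $\zeta_n\in\mathscr W_0(I;\Omega)\cap C(\overline I;L^2(\Omega))$; and combining \eqref{core01} with \eqref{core11} gives $\zeta_n\to\zeta$ in $L^2(I;L^2(\Omega))$, weakly in $W^{1,2}(I;L^2(\Omega))$ and weakly in $L^2(I;H^1(\Omega))$, which is the weight-part of \eqref{core01}; the remaining hypotheses \eqref{core02} and \eqref{core03} of Lemma \ref{Lem.LB} hold by assumption and by (i). With $A_n:=\int_I[\beta_n(t)|Dv_n(t)|]_{\gamma_n}(\overline\Omega)\,dt$ and $A$ the corresponding limit, linearity in the weight turns the conclusion of Lemma \ref{Lem.LB} into $\liminf_n(cA_n-B_n)\ge cA-B$; since $A_n\to A$ (finite) by \eqref{core13}, this rearranges to $\limsup_n B_n\le B$. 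Symmetrically, applying Lemma \ref{Lem.LB} to the nonnegative family $\varrho_n+M_0$, $\varrho+M_0$ (again in $\mathscr W_0(I;\Omega)\cap C(\overline I;L^2(\Omega))$, with \eqref{core03} from (i)) and using linearity, $\int_I[(\varrho_n+M_0)|Dv_n|]_{\gamma_n}(\overline\Omega)\,dt=B_n+M_0D_n$ gives $\liminf_n(B_n+M_0D_n)\ge B+M_0D$, whence $\liminf_n B_n\ge B$ by (ii). Combining the two one-sided estimates yields $B_n\to B$, which is precisely \eqref{core14}.

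The substantive work is entirely in checking that these auxiliary weight families meet \emph{every} hypothesis of Lemma \ref{Lem.LB}: the nonnegativity requirement is what forces the specific constants $c=M_0/\delta_0$ and the shift by $M_0$, and the required $C(\overline I;L^2(\Omega))$-regularity is exactly what $\varrho_n\in W^{1,2}(I;L^2(\Omega))$ in \eqref{core11} supplies. A secondary point is justifying that the precise-representative map $u\mapsto[u]^*$ acts linearly $|Dv_n(t)|$-a.e.\ on $H^1(\Omega)\cap L^\infty(\Omega)$, so that $[\cdot|Du|]_{\gamma}(\overline\Omega)$ is genuinely linear on nonnegative weights; this reduces to the $\mathcal H^{N-1}$-a.e.\ linearity of traces of $H^1$-functions on the rectifiable jump set. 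Finally, one notes that $B_n$ and $D_n$ are uniformly bounded (via the trace estimate together with $L_*<\infty$), so all the subtractions above are between finite quantities.
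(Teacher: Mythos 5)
Your proof is correct and follows essentially the same route as the paper: both proofs exploit the linearity of $\beta\mapsto[\beta|Du|]_{\gamma}(\overline\Omega)$, decompose the weight $\frac{M_0}{\delta_0}\beta_n$ into $\varrho_n$ and the nonnegative complement $\frac{M_0}{\delta_0}\beta_n-\varrho_n$, apply Lemma~\ref{Lem.LB} to the pieces, and use the equality \eqref{core13} to squeeze the desired limit out of the one-sided bounds; the paper packages this pinching in the elementary fact (\hyperlink{(*)}{$*$}), while you unpack it into matching $\limsup$ and $\liminf$ estimates, which is the same thing.

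The one place you do something genuinely different is worth noting. The paper's proof obtains $\liminf_n a_n\geq a_0$ (with $a_n=\int_I[\varrho_n(t)|Dv_n(t)|]_{\gamma_n}(\overline\Omega)\,dt$) ``by applying Lemma~\ref{Lem.LB}'' with $\{\beta_n\}=\{\varrho_n\}$ directly; but Lemma~\ref{Lem.LB} requires its weights to lie in $\mathscr W_0(I;\Omega)$, i.e.\ to be nonnegative, and the hypotheses of Lemma~\ref{Lem.core} impose no sign condition on $\varrho_n$ (indeed, in the paper's own applications $\varrho=w\alpha'(\eta)$ with $w\in H^1(\Omega)\cap L^\infty(\Omega)$ arbitrary, so $\varrho$ may change sign). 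You notice this and apply Lemma~\ref{Lem.LB} instead to the shifted family $\varrho_n+M_0\geq 0$, which forces you to prove the auxiliary convergence $D_n\to D$ for the constant weight $\mathbf 1$ --- a short extra application of the same mechanism --- and then subtract $M_0D_n$. This closes a small gap that the paper's proof leaves implicit, at the cost of one more pass through the linearity/LSC machinery. Everything else --- the check that the auxiliary weights stay in $\mathscr W_0(I;\Omega)\cap C(\overline I;L^2(\Omega))$ (using $W^{1,2}(I;L^2(\Omega))\hookrightarrow C(\overline I;L^2(\Omega))$), the uniform bound $L_*<\infty$ extracted from \eqref{core10} and the finiteness of the limit in \eqref{core13}, and the observation that the third term of \eqref{01_[bt|Du|]_gm} vanishes on $\overline\Omega$ so the functional is genuinely linear in the weight --- is correctly argued.
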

}
{
\noindent
\textbf{Proof. } This lemma is proved by using the following elementary fact (\cite[Proposition 1.80]{MR1857292})
\begin{enumerate}
        \hypertarget{(*)}{}
\item[$(*)$]if $ a_0 \in \R $, $ b_0 \in \R $, $ \{ a_n \}_{n = 1}^\infty \subset \R $ and $ \{ b_n \}_{n = 1}^\infty \subset \R $ fulfill that:
\begin{equation}\label{*1}
\liminf_{n \to \infty} a_n \geq a_0 \mbox{ \ and \ } \liminf_{n \to \infty} b_n \geq b_0,
\end{equation}
and
\begin{equation}\label{*2}
\limsup_{n \to \infty} (a_n +b_n) \leq a_0 +b_0,
\end{equation}
then \ $ a_n \to a_0 $ and $ b_n \to b_0 $ as $ n \to \infty $.
\end{enumerate}

With the above fact in mind, let us put:
\begin{equation*}
\left\{ \begin{array}{l}
\ds a_0 := \int_I [\varrho(t)|D v(t)|]_{\gamma}(\overline{\Omega}) \, dt,
\\[2ex]
\ds b_0 := \int_I \bigl[ \bigl( {\ts \frac{M_0}{\delta_0} \beta(t) -\varrho(t)} \bigr)|D v(t)|\bigr]_{\gamma} \hspace{0ex} (\overline{\Omega}) \, dt,
\end{array} \right.
\end{equation*}
and
\begin{equation*}
\left\{ \begin{array}{l}
\ds a_n := \int_I [\varrho_n(t)|D v_n(t)|]_{\gamma_n}(\overline{\Omega}) \, dt,
\\[2ex]
\ds b_n := \int_I \bigl[ \bigl( {\ts \frac{M_0}{\delta_0} \beta_n(t) -\varrho_n(t)} \bigr)|D v(t)|\bigr]_{\gamma_n} \hspace{0ex} (\overline{\Omega}) \, dt,
\end{array} \right.
\mbox{for $ n = 1, 2, 3, \dots $.}
\end{equation*}
Then, by \eqref{core01}, \eqref{core02}, \eqref{core10}, and \eqref{core11}, we verify the condition \eqref{*1} by applying Lemma \ref{Lem.LB}. Also, on account of \eqref{Phi_gm(bt.)} and \eqref{core13}, condition \eqref{*2} can be verified as follows:
\begin{eqnarray*}
\limsup_{n \to \infty} (a_n +b_n) & = & \frac{M_0}{\delta_0} \lim_{n \to \infty} \int_I [\beta_n(t)|D v_n(t)|]_{\gamma_n}(\overline{\Omega}) \, dt
\\
& = & \frac{M_0}{\delta_0} \int_I [\beta(t)|D v(t)|]_{\gamma}(\overline{\Omega}) \, dt = a_0 +b_0.
\end{eqnarray*}
In view of these, \eqref{core14} is obtained as a straightforward consequence of  the fact $ (\hyperlink{(*)}{*}) $. \hfill $ \Box $
}
\bigskip

{
As a consequence of Lemmas \ref{Lem.LB} and \ref{Lem.core}, we can show the following Corollaries.
}
{
\begin{cor}\label{Cor.Gamma-conv}
If $ \beta \in \mathscr{W}_{\rm c} (I; \Omega)\cap C(\overline I;L^2(\Omega)) $, $ \{ \beta_n \}_{n = 1}^\infty \subset \mathscr{W}_{\rm c}(I; \Omega)\cap C(\overline I;L^2(\Omega)) $, $ \gamma \in H^{\frac{1}{2}}(\Gamma) $, and $ \{ \gamma_n \}_{n = 1}^\infty \subset H^{\frac{1}{2}}(\Gamma) $ fulfill that:
\begin{align}\label{coreG01}
    \beta_n \to & \beta \mbox{ in $ L^2(I; L^2(\Omega)) $, }
    \mbox{weakly in $ W^{1, 2}(I; L^2(\Omega)) $,}
    \nonumber
    \\
    & \mbox{weakly in $ L^2(I; H^1(\Omega)) $, }
    \mbox{and weakly-$*$ in $ L^\infty(I \times \Omega) $,  as $ n \to \infty $.}
\end{align}
and
\begin{equation}\label{coreG02}
    \gamma_n \to \gamma \mbox{ in $ H^{\frac{1}{2}}(\Gamma) $, as $ n \to \infty $,}
\end{equation}
then the sequence of convex functions $ \{ \Phi^{I}_{\gamma_n}(\beta_n;{}\cdot{}) \}_{n = 1}^\infty $ $ \mathit{\Gamma} $-converges to the convex function $ \Phi^{I}_{\gamma}(\beta;{}\cdot{}) $ on $ L^2(I; L^2(\Omega)) $, as $ n \to \infty $.
\end{cor}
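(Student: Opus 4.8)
The plan is to verify directly the two conditions in the definition of $ \mathit{\Gamma} $-convergence for the strong topology of $ L^2(I;L^2(\Omega)) $ --- the lower bound (liminf) inequality and the optimality (recovery sequence) condition --- using Lemmas \ref{Lem.LB} and \ref{Lem.core} as the two workhorses. Note that the convergences \eqref{coreG01}--\eqref{coreG02} assumed here are precisely the convergences \eqref{core01} required in those lemmas, so the whole task reduces to arranging their remaining hypotheses for the competitors at hand.

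\emph{Lower bound.} Let $ v_n \to v $ in $ L^2(I;L^2(\Omega)) $. We may assume $ \liminf_n \Phi^I_{\gamma_n}(\beta_n;v_n) =: L < \infty $ and, passing to a subsequence, that $ \Phi^I_{\gamma_n}(\beta_n;v_n) \to L $. The energy bound then controls $ \int_I [\beta_n(t)|Dv_n(t)|]_{\gamma_n}(\overline{\Omega})\,dt $, and, using the positivity of the weights (in the setting where this corollary is applied one has $ \beta_n \geq \delta_0 > 0 $; in any case $ \beta \in \mathscr{W}_{\rm c}(I;\Omega) $ places us in the first alternative of \eqref{core03}), we obtain a uniform bound on $ \bigl| |Dv_n(\cdot)|(\Omega) \bigr|_{L^1(I)} $ together with the boundary contribution; hence $ \{v_n\} $ is bounded in $ L^1(I;BV(\Omega)) $. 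Combining this with $ v_n \to v $ in $ L^2(I;L^2(\Omega)) $ and passing to a further subsequence, we may assume $ v_n(t) \to v(t) $ in $ L^2(\Omega) $ and weakly-$ * $ in $ BV(\Omega) $ for a.e.\ $ t \in I $, i.e.\ condition \eqref{core02} holds. Lemma \ref{Lem.LB} now applies and gives
\[
    L = \liminf_n \int_I [\beta_n(t)|Dv_n(t)|]_{\gamma_n}(\overline{\Omega})\,dt \geq \int_I [\beta(t)|Dv(t)|]_{\gamma}(\overline{\Omega})\,dt = \Phi^I_{\gamma}(\beta;v),
\]
which is the desired inequality.

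\emph{Recovery sequence.} Fix $ v \in D(\Phi^I_{\gamma}(\beta;{}\cdot{})) $, so that $ |Dv(\cdot)|(\Omega) \in L^1(I) $ (Lemma \ref{baseA}\,(II), or directly from a lower bound on $ \beta $). Since $ H^1(\Gamma) $ is dense in $ H^{\frac{1}{2}}(\Gamma) $, choose $ \widehat{\gamma}_n \in H^1(\Gamma) $ with $ |\widehat{\gamma}_n - \gamma_n|_{H^{\frac{1}{2}}(\Gamma)} \leq 1/n $, so that $ \widehat{\gamma}_n \to \gamma $ in $ H^{\frac{1}{2}}(\Gamma) $. Applying Lemma \ref{axLem_reg} to $ v $ with the $ H^1(\Gamma) $-sequence $ \{\widehat{\gamma}_n\} $ produces $ \widehat{v}_n \in L^\infty(I;H^1(\Omega)) $ with $ \widehat{v}_n(t) = \widehat{\gamma}_n $ on $ \Gamma $ for a.e.\ $ t $, $ \widehat{v}_n \to v $ in $ L^2(I;L^2(\Omega)) $, and $ \int_I\int_\Omega |\nabla \widehat{v}_n(t)|\,dx\,dt \to \int_I [|Dv(t)|]_{\gamma}(\overline{\Omega})\,dt $, together with the pointwise convergences of Remark \ref{axRem_reg}. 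Feeding $ \{\widehat{v}_n\} $ into Lemma \ref{Lem.core}, with base weights $ \equiv 1 $ (so \eqref{core10} holds with $ \delta_0 = 1 $, and \eqref{core13} is exactly the convergence just displayed, its left-hand side reducing to $ \int_I\int_\Omega|\nabla\widehat{v}_n(t)|\,dx\,dt $ because the boundary term vanishes) and target weights $ \varrho_n = \beta_n \to \varrho = \beta $ in the senses of \eqref{core11} (contained in \eqref{coreG01}, the $ L^\infty $-bound coming from the weak-$ * $ convergence there), we obtain $ \int_I\int_\Omega \beta_n(t)|\nabla\widehat{v}_n(t)|\,dx\,dt \to \int_I [\beta(t)|Dv(t)|]_{\gamma}(\overline{\Omega})\,dt $. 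Since $ \widehat{v}_n(t)\in H^1(\Omega) $ its variation measure is absolutely continuous and $ \widehat{v}_n(t)|_\Gamma = \widehat{\gamma}_n $, whence
\[
    \Phi^I_{\gamma_n}(\beta_n;\widehat{v}_n) = \int_I\int_\Omega \beta_n(t)|\nabla\widehat{v}_n(t)|\,dx\,dt + \int_I\int_\Gamma \beta_n(t)\,|\widehat{\gamma}_n - \gamma_n|\,d\mathcal{H}^{N-1}\,dt,
\]
and the boundary integral tends to $ 0 $ since $ |\widehat{\gamma}_n - \gamma_n|_{L^1(\Gamma)} \leq C\,|\widehat{\gamma}_n - \gamma_n|_{H^{\frac{1}{2}}(\Gamma)} \to 0 $ and $ \sup_n |\beta_n|_{L^\infty(I\times\Omega)} < \infty $. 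Therefore $ \Phi^I_{\gamma_n}(\beta_n;\widehat{v}_n) \to \Phi^I_{\gamma}(\beta;v) $, and $ \{\widehat{v}_n\} $ is the required recovery sequence.

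\emph{Main obstacle.} The delicate step is, in the lower-bound part, the extraction of a single subsequence along which $ v_n(t) \to v(t) $ weakly-$ * $ in $ BV(\Omega) $ for a.e.\ $ t \in I $: the energy estimate only delivers a bound in $ L^1(I;BV(\Omega)) $ with no time regularity on the $ v_n $, so this selection has to be carried out with care, and it is here --- rather than in \eqref{core03} --- that the positivity of the weights $ \beta_n $ is genuinely used. The remaining points are essentially bookkeeping: the $ H^{\frac{1}{2}}(\Gamma) $-versus-$ H^1(\Gamma) $ discrepancy for the boundary data (handled above by the density argument), checking the $ L^\infty(I;H^1(\Omega)) $-regularity of the weights demanded by Lemma \ref{Lem.core} (part of the standing setting in which this corollary is used), and verifying that the convergences in \eqref{coreG01} are preserved by the extension operators of Remark \ref{Rem.ext}, so that Lemmas \ref{Lem.LB} and \ref{Lem.core} can be invoked as stated. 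Once these are in place, the two conditions above yield the $ \mathit{\Gamma} $-convergence.
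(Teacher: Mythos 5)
Your overall architecture (liminf via Lemma~\ref{Lem.LB}, recovery via Lemma~\ref{Lem.core}) matches the paper's, but the route through the recovery part is considerably more elaborate than it needs to be, and there is a flagged-but-unresolved gap in the lower bound.

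For the \emph{recovery sequence}, the paper simply takes the constant sequence $v_n := v$ and applies Lemma~\ref{Lem.core} with $\beta=1$, $\{\beta_n\}=\{1\}$, $\varrho=\beta$, $\{\varrho_n\}=\{\beta_n\}$. With $v_n = v$, condition~\eqref{core02} is automatic, \eqref{core10} holds with $\delta_0=1$, and \eqref{core13} reduces to
$\int_I [\,|Dv(t)|\,]_{\gamma_n}(\overline{\Omega})\,dt \to \int_I [\,|Dv(t)|\,]_{\gamma}(\overline{\Omega})\,dt$,
which follows from $|\gamma_n-\gamma|_{L^1(\Gamma)}\to 0$ (compact embedding $H^{\frac12}(\Gamma)\hookrightarrow L^1(\Gamma)$). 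The conclusion of Lemma~\ref{Lem.core} is then precisely $\Phi^I_{\gamma_n}(\beta_n;v)\to\Phi^I_{\gamma}(\beta;v)$. The Dirichlet datum in $\Phi_\gamma$ is \emph{penalized} (via $\int_\Gamma\beta|v-\gamma|$), not imposed as a trace constraint, so there is no need to build $H^1$-competitors with matching boundary values. Your construction via an $H^1(\Gamma)$-approximation $\widehat\gamma_n$ of $\gamma_n$ together with Lemma~\ref{axLem_reg} is correct, but it is the machinery the paper reserves for Corollary~\ref{Cor.Gamma-conv_nu}\,(II), where the relaxed functionals $\Phi^\nu_\gamma$ enforce the trace condition $v=\gamma$ strictly; deploying it here is overkill.

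For the \emph{lower bound}, your argument contains the gap you yourself flag at the end without resolving. You need the pointwise weak-$*$ $BV(\Omega)$ convergence in~\eqref{core02}, which requires $|Dv_n(t)|(\Omega)$ to be bounded along a single subsequence for a.e.\ $t$. A uniform lower bound $\beta_n\ge\delta_0>0$ is not part of the hypotheses of the Corollary (each $\beta_n\in\mathscr{W}_{\rm c}(I;\Omega)$ has $\log\beta_n\in L^\infty$, but not uniformly in $n$); and even an $L^1(I;BV(\Omega))$-bound on $\{v_n\}$ does not deliver a.e.-in-$t$ boundedness of $|Dv_n(t)|(\Omega)$ along one subsequence. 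As you note, this selection ``has to be carried out with care''; you do not carry it out. To be fair, the paper dismisses the lower bound as a ``straightforward consequence of Lemma~\ref{Lem.LB}'' and does not address the point either, so this is a shared opacity rather than a defect you introduced — but your presentation promises a resolution and delivers only a pointer. The same applies to the $L^\infty(I;H^1(\Omega))$-regularity of the weights required by Lemma~\ref{Lem.core}, which you also defer to the ``standing setting'' rather than to the hypotheses actually stated.
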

}
{
\noindent
\textbf{Proof.}
The \hyperlink{Mosco}{lower bound} condition is a straightforward consequence of Lemma \ref{Lem.LB}. Also, for any $ {v} \in D(\Phi^I_{\gamma}(\beta;{}\cdot{})) $, we can take $ \{v_n:= {v} \} $ as the sequence in the \hyperlink{Mosco}{optimality} condition, by applying Lemma \ref{Lem.core}, under the replacements:
\begin{equation*}
    \begin{cases}
        \parbox{10cm}{
            $ \beta $ by the constant $ 1 $, $ \{ \beta_n \}_{n = 1}^\infty $ by the singleton $ \{ 1 \} $,
            \\[1ex]
            $ \varrho $ by $ \beta $, and $ \{ \varrho_n \}_{n = 1}^\infty  $ by $ \{ \beta_{n} \}_{n = 1}^\infty $.
        }
    \end{cases}
\end{equation*}
$ \Box $
}
{
\begin{cor}\label{Cor.Gamma-conv_nu}
    In addition to the assumptions and notations as in Corollary \ref{Cor.Gamma-conv}, let $ \{ \nu_n \}_{n = 1}^\infty$ $\subset (0, 1) $ be a sequence such that $ \nu_n \downarrow 0 $ as $ n \to \infty $. Then, the following items hold.
\begin{description}
        \hypertarget{Cor.Gamma-conv_nu(I)}
    \item[\textmd{\em (I)}]$ \liminf_{n \to \infty} \Phi^I_{\nu_n, \gamma_n}(\beta_n; v_n) \geq \Phi^I_{\gamma}(\beta; v) $, if $ v \in L^2(I; L^2(\Omega)) $, $ \{ v_n \}_{n = 1}^\infty \subset L^2(I; L^2(\Omega)) $, and $ v_n \to v $ in $ L^2(I; L^2(\Omega)) $ as $ n \to \infty $.
        \hypertarget{Cor.Gamma-conv_nu(II)}
    \item[\textmd{\em (II)}]If $ \gamma \in H^{1}(\Gamma) $, then the sequence $ \{ \Phi^{I}_{\nu_n, \gamma}(\beta_n;{}\cdot{}) \}_{n = 1}^\infty $ of convex functions $ \mathit{\Gamma} $-converges to the convex function $ \Phi^{I}_{\gamma}(\beta;{}\cdot{}) $ on $ L^2(I; L^2(\Omega)) $, as $ n \to \infty $.
\end{description}
\end{cor}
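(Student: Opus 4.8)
The plan is to deduce item (\textit{I}) from the $ \mathit{\Gamma} $-convergence of the unrelaxed functionals already established in Corollary \ref{Cor.Gamma-conv}, using only the pointwise comparison between $ |{}\cdot{}|_\nu $ and $ |{}\cdot{}| $ supplied by property (AP2); and to obtain item (\textit{II}) by supplementing that lower bound with a recovery sequence built in three stages — regularization in space, sending $ \nu \downarrow 0 $, and a diagonal extraction.

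For (\textit{I}), fix $ v_n \to v $ in $ L^2(I; L^2(\Omega)) $. One may assume $ \liminf_n \Phi^I_{\nu_n, \gamma_n}(\beta_n; v_n) < \infty $, pass to a subsequence realizing the liminf, and then (since the relaxed functional is $ +\infty $ off $ L^2(I; H^1(\Omega)) $) assume each $ v_n \in L^2(I; H^1(\Omega)) $ with $ v_n(t) = \gamma_n $ in $ H^{\frac12}(\Gamma) $ for a.e.\ $ t $. On such $ v_n $ one has $ \Phi_{\gamma_n}(\beta_n(t); v_n(t)) = \int_\Omega \beta_n(t) |\nabla v_n(t)| \, dx $, so the lower bound $ |\xi|_{\nu_n} \geq a(\nu_n) |\xi| - b(\nu_n) $ from (AP2), together with discarding the nonnegative $ \nu_n^2 $-term, gives
\[
\Phi^I_{\nu_n, \gamma_n}(\beta_n; v_n) \ \geq \ a(\nu_n) \, \Phi^I_{\gamma_n}(\beta_n; v_n) \ - \ b(\nu_n) \, |\beta_n|_{L^1(I \times \Omega)} .
\]
Since $ a(\nu_n) \to 1 $, $ b(\nu_n) \to 0 $, $ \{ |\beta_n|_{L^1(I \times \Omega)} \} $ is bounded by \eqref{coreG01}, and $ 0 \leq \Phi^I_{\gamma_n}(\beta_n; v_n) $ therefore stays bounded along the subsequence, taking the liminf yields $ \liminf_n \Phi^I_{\nu_n, \gamma_n}(\beta_n; v_n) \geq \liminf_n \Phi^I_{\gamma_n}(\beta_n; v_n) $, and the right-hand side is $ \geq \Phi^I_\gamma(\beta; v) $ by the lower-bound half of Corollary \ref{Cor.Gamma-conv} (applicable under the standing hypotheses \eqref{coreG01}--\eqref{coreG02}). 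This proves (\textit{I}), and, taking $ \gamma_n \equiv \gamma $ (legitimate as $ \gamma \in H^1(\Gamma) \subset H^{\frac12}(\Gamma) $), also the lower-bound condition needed in (\textit{II}).

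For the optimality condition in (\textit{II}), fix $ v \in D(\Phi^I_\gamma(\beta; {}\cdot{})) $, so $ |Dv({}\cdot{})|(\Omega) \in L^1(I) $ (cf.\ Lemma \ref{baseA}). First, apply Lemma \ref{axLem_reg} with $ \gamma_n \equiv \gamma $ and Remark \ref{axRem_reg} to obtain $ \{ w_m \}_{m = 1}^\infty \subset L^\infty(I; H^1(\Omega)) $ with $ w_m(t) = \gamma $ on $ \Gamma $ a.e.\ $ t $, $ w_m \to v $ in $ L^2(I; L^2(\Omega)) $, and, for a.e.\ $ t \in I $, $ w_m(t) \to v(t) $ in $ L^2(\Omega) $ with $ \int_\Omega |\nabla w_m(t)| \, dx \to [|Dv(t)|]_\gamma(\overline{\Omega}) $ (hence $ \sup_m |Dw_m(t)|(\Omega) < \infty $, so $ w_m(t) \to v(t) $ weakly-$ * $ in $ BV(\Omega) $ a.e.\ $ t $). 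Second, for $ m $ fixed and $ n \to \infty $: the $ \nu_n^2 $-term of $ \Phi^I_{\nu_n, \gamma}(\beta_n; w_m) $ is $ O(\nu_n^2) \to 0 $, while the estimate $ \big| |\xi|_{\nu_n} - |\xi| \big| \leq \varepsilon_n |\xi| + b(\nu_n) $ with $ \varepsilon_n := \max\{ c(\nu_n) - 1, \, 1 - a(\nu_n) \} \to 0 $ (from (AP2) and Remark \ref{Rem.Euc}), combined with the $ L^1(I \times \Omega) $-convergence $ \beta_n |\nabla w_m| \to \beta |\nabla w_m| $ (from $ \beta_n \to \beta $ in $ L^2(I; L^2(\Omega)) $), yields $ \Phi^I_{\nu_n, \gamma}(\beta_n; w_m) \to \int_I \! \int_\Omega \beta(t) |\nabla w_m(t)| \, dx \, dt = \Phi^I_\gamma(\beta; w_m) $. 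Third, letting $ m \to \infty $, apply Lemma \ref{Lem.core} with the constant weight family $ \equiv 1 $, $ \gamma_m \equiv \gamma $, $ v_m := w_m $ and $ \varrho_m := \beta $ — whose hypothesis \eqref{core13} is exactly the convergence $ \int_I \! \int_\Omega |\nabla w_m(t)| \, dx \, dt \to \int_I [|Dv(t)|]_\gamma(\overline{\Omega}) \, dt $ delivered by Lemma \ref{axLem_reg} — to get $ \Phi^I_\gamma(\beta; w_m) \to \Phi^I_\gamma(\beta; v) $. Finally, a standard diagonal lemma produces $ m(n) \uparrow \infty $ such that $ v_n := w_{m(n)} $ satisfies $ v_n \to v $ in $ L^2(I; L^2(\Omega)) $ and $ \Phi^I_{\nu_n, \gamma}(\beta_n; v_n) \to \Phi^I_\gamma(\beta; v) $; together with (\textit{I}) this gives the $ \mathit{\Gamma} $-convergence asserted in (\textit{II}).

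The main obstacle is the optimality half of (\textit{II}): unlike in Corollary \ref{Cor.Gamma-conv}, the constant sequence $ v_n = v $ is useless since the relaxed functionals are finite only on $ L^2(I; H^1(\Omega)) $, so one needs a genuine $ H^1 $-in-space, $ \gamma $-on-$ \Gamma $ approximation of a merely $ BV $-in-space function, and one must upgrade the \emph{unweighted} approximation produced by Lemma \ref{axLem_reg} to convergence of the \emph{$ \beta $-weighted} total variations across both passages $ n \to \infty $ and $ m \to \infty $ — which is precisely why Lemma \ref{Lem.core} enters the argument. The remaining points — bookkeeping of the $ a(\nu), b(\nu), c(\nu) $ error terms, the reduction to the case of finite liminf, and the diagonal extraction — are routine.
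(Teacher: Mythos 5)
Your proof is correct and, at the level of ideas, matches the paper's: item (I) by comparing the relaxed and unrelaxed functionals via (AP2) and invoking the lower-bound half of Corollary \ref{Cor.Gamma-conv}, and item (II) via Lemma \ref{axLem_reg} to produce an $H^1$-in-space, $\gamma$-trace approximation of the $BV$-valued path, Lemma \ref{Lem.core} to upgrade from unweighted to $\beta$-weighted total variations, and a diagonal extraction. The organization of (II) differs slightly: the paper selects a subsequence $\{n_k\}$ up front so that $\nu_{n_k}^{2}\int_I\int_\Omega|\nabla\omega_k|^2\,dx\,dt\leq 2^{-k}$, sets the recovery sequence to $\check{\omega}_n:=\omega_k$ for $n_k\le n<n_{k+1}$, and then applies Lemma \ref{Lem.core} a single time with both $v_n:=\check{\omega}_n$ and $\varrho_n:=\beta_n$ moving with $n$; you instead prove the inner convergence $\Phi^I_{\nu_n,\gamma}(\beta_n;w_m)\to\Phi^I_\gamma(\beta;w_m)$ by hand for fixed $m$ (via the $a,b,c$ estimates of (AP2) and $\beta_n\to\beta$ in $L^2(I;L^2(\Omega))$), apply Lemma \ref{Lem.core} only for the outer limit $m\to\infty$ with the frozen weight $\varrho\equiv\beta$, and then diagonalize. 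Both routes are valid and of comparable length: the paper's front-loaded diagonal avoids the explicit double limit, while your split keeps the two passages cleanly decoupled and makes the role of Lemma \ref{Lem.core} more transparent. You are also a bit more careful than the paper in item (I): the paper's chain $\Phi^I_{\nu_n,\gamma_n}(\beta_n;v_n)\geq\Phi^I_{\gamma_n}(\beta_n;v_n)$ is not literally true term by term (for instance, the hyperbola regularization satisfies $|\xi|_\nu\leq|\xi|$), whereas your explicit use of $a(\nu_n), b(\nu_n)$ together with the boundedness of $|\beta_n|_{L^1(I\times\Omega)}$ supplied by \eqref{coreG01} delivers exactly the needed comparison at the level of liminfs.
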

}
{
\noindent
\textbf{Proof.}
By Corollary \ref{Cor.Gamma-conv}, item (I) is immediately obtained as follows:
\begin{equation*}
\begin{array}{c}
    \ds \liminf_{n \to \infty} \Phi^I_{\nu_n, \gamma_n}(\beta_n; v_n) \geq \liminf_{n \to \infty} \Phi^I_{\gamma_n}(\beta_n; v_n) \geq \Phi^I_{\gamma}(\beta; v),
\\[1ex]
\mbox{whenever $ v \in L^2(I; L^2(\Omega)) $, $ \{ v_n \}_{n = 1}^\infty \subset L^2(I; L^2(\Omega)) $}
    \\[1ex]
\mbox{and $ v_n \to v $ in $ L^2(I; L^2(\Omega)) $ as $ n  \to \infty $.}
\end{array}
\end{equation*}

Next, we show (II). Due to (I), it is sufficient to verify the condition of \hyperlink{Mosco}{optimality} in the $ \Gamma $-convergence of $ \{ \Phi^I_{\nu_n, \gamma}(\beta_n;{}\cdot{}) \}_{n = 1}^\infty $. Given any $ \omega \in D(\Phi_I(\gamma, \beta;{}\cdot{})) $, owing to \eqref{dom}, $ \gamma \in H^1(\Gamma) $, and Lemma \ref{axLem_reg}, we can take  $ \{ \omega_n \}_{n = 1}^\infty \subset L^2(I; H^1(\Omega)) $, such that:
\begin{equation}\label{Cor.Gamma01-00}
    \omega_n(t) = \gamma \mbox{ in $ H^1(\Gamma) $, $ n = 1, 2, 3, \dots $,}
\end{equation}
and
\begin{equation}\label{Cor.Gamma00}
    \begin{array}{c}
        \ds \omega_n \to \omega \mbox{ in $ L^2(I; L^2(\Omega)) $ and } \int_I \int_\Omega d |D {\omega}_n| \, dt \to \int_I \int_\Omega d |D \omega| \, dt,
        \quad
        \mbox{as $ n \to \infty $.}
    \end{array}
\end{equation}

On this basis, let us take a subsequence $ \{ n_k \}_{k = 1}^\infty \subset \{ n \} $ such that:
\begin{equation}\label{Cor.Gamma02}
\frac{\nu_{n_k}}{2} \int_I \int_\Omega |\nabla \omega_{k}(t)|^2 \, dx dt \leq 2^{-k}, \mbox{ \ for $ k = 1, 2, 3, \dots $,}
\end{equation}
and define a sequence $ \{ \check{\omega}_n \}_{n = 1}^\infty \subset L^2(I; H^1(\Omega)) $ as follows:
\begin{equation}\label{Cor.Gamma03}
\check{\omega}_n(t) := \begin{cases}
\omega_{k}(t) \mbox{ in $ H^1(\Omega) $, if $ n_{k} \leq n < n_{k +1} $,}
\\
\omega_{1}(t) \mbox{ in $ H^1(\Omega) $, if $ 1 \leq n < n_{1} $,}
\end{cases} \mbox{for $ n = 1, 2, 3\dots $.}
\end{equation}
Then, from \eqref{Cor.Gamma03} and Lemma \ref{axLem_reg}, it follows that:
\begin{equation}\label{Cor.Gamma04}
\check{\omega}_n \to \omega \mbox{ in $ L^2(I; L^2(\Omega)) $ and $ \nu_n |\nabla \check{\omega}_n|_{L^2(\Omega)^N}^2 \to 0 $, as $ n \to \infty $.}
\end{equation}
Also, by \eqref{coreG01}, \eqref{Cor.Gamma00}--\eqref{Cor.Gamma03}, we can apply Lemma \ref{Lem.core} in the case that
\begin{equation*}
    \begin{array}{c}
        \beta = 1, ~ \{ \beta_n \}_{n = 1}^\infty = \{1\}, ~ \gamma = \gamma, ~ \{ \gamma_n \}_{n = 1}^\infty = \{ \gamma \},
                \varrho = \beta, \mbox{ and } \{ \varrho_n \}_{n = 1}^\infty = \{ \beta_n \}_{n = 1}^\infty,
    \end{array}
\end{equation*}
and we see that
\begin{align}\label{Cor.Gamma01}
    & \int_I \int_\Omega d \bigl[ \beta_n(t) |D \check{\omega}_n(t)| \bigr]_{\gamma} \to \int_I \int_\Omega d \bigl[ \beta(t) |D \omega(t)| \bigr]_{\gamma}, \mbox{ as $ n \to \infty $.}
\end{align}
Furthermore, from \eqref{Cor.Gamma01-00}, it immediately follows that:
\begin{align}\label{Cor.Gamma05}
    \check{v}_n \in D(\Phi^I_{\nu_n, \gamma}(\beta_n;{}\cdot{})), \mbox{ for $ n = 1, 2, 3, \dots $.}
\end{align}
On account of \eqref{Cor.Gamma04}--\eqref{Cor.Gamma05}, we verify that the sequence $ \{ \check{\omega}_n \}_{n = 1}^\infty $ satisfies the \hyperlink{Mosco}{optimality} condition. Thus we conclude this Lemma. \hfill $ \Box $
}
{
\begin{rem}\label{Rem.G-conv01}
Applying Corollaries \ref{Cor.Gamma-conv} and \ref{Cor.Gamma-conv_nu} to the case when $ I = (0, 1) $, and all functions $ \beta $, $ \beta_n $, $ n = 1, 2, 3, \dots $, are independent of time, we easily deduce the following facts.
    \begin{description}
        \hypertarget{Fact6}{}
        \item[{(Fact\,6)}]Let $ \beta \in W_c(\Omega) $ and $ \{ \beta_n \}_{n = 1}^\infty \subset W_0(\Omega) $ be such that:
        \begin{equation}\label{beta^circ}
           \beta_n \to \beta \mbox{ in $ L^2(\Omega) $, weakly in $ H^1(\Omega) $, and weakly-$*$ in $ L^\infty(\Omega) $, as $ n \to \infty $,}
        \end{equation}
            and let $ \gamma \in H^{\frac{1}{2}}(\Gamma) $ and $ \{ \gamma_n \}_{n = 1}^\infty \subset H^{\frac{1}{2}}(\Gamma) $ be as in \eqref{coreG02}. Then the sequence of convex functions $ \{ \Phi_{\gamma_n}(\beta_n;{}\cdot{}) \}_{n = 1}^\infty $ $ \Gamma $-converges to the convex function $ \Phi_\gamma(\beta;{}\cdot{}) $ on $ L^2(\Omega) $, as $ n \to \infty $.
        \hypertarget{Fact7}{}
        \item[{(Fact\,7)}]If $ \beta \in W_c(\Omega) $ and $ \{ \beta_n \}_{n = 1}^\infty \subset W_0(\Omega) $ satisfy \eqref{beta^circ}, $ \gamma  \in H^1(\Gamma) $, and $ \{ \nu_n \}_{n = 1}^\infty \subset (0, 1) $ satisfies $  \nu_n \downarrow 0 $ as $ n \to \infty $,  then the sequence of relaxed convex functions $ \{ \Phi_{\gamma}^{\nu_n}(\beta_n;{}\cdot{}) \}_{n = 1}^\infty $ $ \Gamma $-converges to the convex function $ \Phi_\gamma(\beta;{}\cdot{}) $ on $ L^2(\Omega) $, as $ n \to \infty $.
    \end{description}
\end{rem}
}
\section{Proofs of Theorems \ref{thAp1} and \ref{thAp2}}
\ \ \vspace{-3ex}

In this section, Theorems \ref{thAp1} and \ref{thAp2}, concerned with the key-properties of approximating solutions, will be proved via some auxiliary Lemmas.

The first result deals with solvability of the elliptic problem \eqref{kenApp01}. The proof is analogous to that of \cite[Lemmas 9--11]{MR3268865} and we omit it.

\begin{lem}\label{axLemD1}
    There exist
a (small) positive constant $ h_0 \in (0, 1) $, depending on $ |g|_{W^{1, \infty}(0, 1)} $, $ |\alpha|_{C^1[0, 1]} $ and $ \mathscr{L}^N(\Omega) $ such that, for every $ \nu \in (0, 1) $, $ h \in (0, h_0] $ and $[\eta,\theta]\in H^1(\Omega)^2$, there exists a unique function $ \eta_{h}^{\nu} \in H^1(\Omega) $, such that
\begin{equation*}
0 \leq \eta_{h}^{\nu} \leq 1 \mbox{ a.e. in $ \Omega $.}
\end{equation*}
    and it satisfies
\begin{equation*}
\begin{array}{ll}
\multicolumn{2}{l}{\ds
\frac{1}{h} (\eta_{h}^{\nu} -\eta, w)_{L^2(\Omega)} +(\nabla \eta_{h}^{\nu}, \nabla w)_{L^2(\Omega)^N} +(g(\eta_{h}^{\nu}), w)_{L^2(\Omega)}
}
\\[1ex]
\qquad \qquad & \ds +(\alpha'(\eta_{h}^{\nu}) |\nabla \theta|_\nu, w)_{L^2(\Omega)} = 0, \mbox{ for every $ w \in H^1(\Omega) $.}
\end{array}
\vspace{1ex}
\end{equation*}
\end{lem}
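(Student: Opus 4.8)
\medskip\noindent\textbf{Proof (plan).} The plan is to replace the (only locally Lipschitz) nonlinearities by bounded, globally Lipschitz truncations, to solve the resulting semilinear Neumann problem by the direct method, to recover the constraint $0\le\eta_h^\nu\le1$ by a maximum-principle test, and to deduce uniqueness for small $h$ from an energy identity in which the awkward lower-order term carries a favourable sign. First I would set $\hat g:=g\circ\mathcal{T}_0^1$ and $\hat a:=\alpha'\circ\mathcal{T}_0^1$ on $\R$; both are bounded and globally Lipschitz, with constants governed by $|g|_{W^{1,\infty}(0,1)}$ and $|\alpha|_{C^2[0,1]}$, and since $\alpha$ is convex with $\alpha'(0)=0$ the map $\hat a$ is moreover nonnegative and nondecreasing. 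The datum $\theta$ enters only through the fixed coefficient $b_\nu:=|\nabla\theta|_\nu$, which by Remark \ref{Rem.Euc} satisfies $0\le b_\nu\le c(\nu)\,|\nabla\theta|\in L^2(\Omega)$. One first solves the truncated problem
\begin{equation*}
\tfrac1h(\zeta-\eta,w)_{L^2(\Omega)}+(\nabla\zeta,\nabla w)_{L^2(\Omega)^N}+(\hat g(\zeta),w)_{L^2(\Omega)}+(\hat a(\zeta)\,b_\nu,w)_{L^2(\Omega)}=0,\qquad w\in H^1(\Omega),
\end{equation*}
which is the Euler--Lagrange equation of
\begin{equation*}
J(\zeta):=\tfrac1{2h}|\zeta|_{L^2(\Omega)}^2+\tfrac12|\nabla\zeta|_{L^2(\Omega)^N}^2+\int_\Omega\widehat G(\zeta)\,dx+\int_\Omega\widehat A(\zeta)\,b_\nu\,dx-\tfrac1h(\eta,\zeta)_{L^2(\Omega)},
\end{equation*}
with $\widehat G(r):=\int_0^r\hat g$ (Lipschitz, linear growth) and $\widehat A(r):=\int_0^r\hat a\ge0$ (Lipschitz). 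Then $J$ is coercive and bounded below on $H^1(\Omega)$ (the two explicit integrals are dominated by $\varepsilon|\zeta|_{H^1(\Omega)}^2+C_\varepsilon$, the $\widehat A$-term being $\ge0$) and sequentially weakly l.s.c.\ (the quadratic part is convex and continuous, and by Rellich's theorem weak $H^1(\Omega)$-convergence forces strong $L^2(\Omega)$-convergence, along which $\zeta\mapsto\int_\Omega\widehat G(\zeta)\,dx+\int_\Omega\widehat A(\zeta)\,b_\nu\,dx$ is continuous, using that $\widehat G,\widehat A$ are Lipschitz and $b_\nu\in L^2(\Omega)$); the direct method thus yields a minimizer $\eta_h^\nu\in H^1(\Omega)$ solving the truncated problem, and no smallness of $h$ is needed here. (Equivalently, one may invoke the surjectivity of a strongly monotone, coercive, hemicontinuous operator perturbed by the completely continuous map $\zeta\mapsto\hat g(\zeta)+\hat a(\zeta)b_\nu$ from $H^1(\Omega)$ into $L^2(\Omega)$.)

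Next I would recover the range. Testing the truncated identity with $w=[\eta_h^\nu-1]^+\in H^1(\Omega)$ and using that on $\{\eta_h^\nu>1\}$ one has $\hat g(\eta_h^\nu)=g(1)\ge1>0$, $\hat a(\eta_h^\nu)\ge0$, $b_\nu\ge0$ and $(\nabla\eta_h^\nu,\nabla[\eta_h^\nu-1]^+)_{L^2(\Omega)^N}=|\nabla[\eta_h^\nu-1]^+|_{L^2(\Omega)^N}^2\ge0$, one obtains $\tfrac1h(\eta_h^\nu-\eta,[\eta_h^\nu-1]^+)_{L^2(\Omega)}\le0$, whence $|[\eta_h^\nu-1]^+|_{L^2(\Omega)}\le|[\eta-1]^+|_{L^2(\Omega)}$; testing symmetrically with $-[\eta_h^\nu]^-$ and using $g(0)\le0$ yields $|[\eta_h^\nu]^-|_{L^2(\Omega)}\le|[\eta]^-|_{L^2(\Omega)}$. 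For the data $\eta$ relevant to the scheme, which satisfy $0\le\eta\le1$ a.e.\ (cf.\ \eqref{kenApp00}), this forces $0\le\eta_h^\nu\le1$ a.e.; then $\mathcal{T}_0^1\eta_h^\nu=\eta_h^\nu$, the truncations are inactive, and $\eta_h^\nu$ solves the identity in the statement.

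Finally, uniqueness for small $h$: if $\eta_1,\eta_2$ are two such solutions, both lie in $[0,1]$ by the preceding step, and subtracting their equations and testing with $\eta_1-\eta_2$ gives
\begin{equation*}
\tfrac1h|\eta_1-\eta_2|_{L^2(\Omega)}^2+|\nabla(\eta_1-\eta_2)|_{L^2(\Omega)^N}^2+(g(\eta_1)-g(\eta_2),\eta_1-\eta_2)_{L^2(\Omega)}+\int_\Omega\bigl(\alpha'(\eta_1)-\alpha'(\eta_2)\bigr)(\eta_1-\eta_2)\,b_\nu\,dx=0.
\end{equation*}
The last integral is $\ge0$, since $\alpha'$ is nondecreasing on $[0,1]$ (convexity of $\alpha$) and $b_\nu\ge0$; this sign is the crux of the argument, because $b_\nu$ is merely $L^2(\Omega)$, so the alternative, Lipschitz estimate $|\alpha'(\eta_1)-\alpha'(\eta_2)|\,b_\nu\lesssim|\eta_1-\eta_2|\,b_\nu$ could not be absorbed by the remaining terms. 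Bounding $(g(\eta_1)-g(\eta_2),\eta_1-\eta_2)_{L^2(\Omega)}$ by $|g|_{W^{1,\infty}(0,1)}|\eta_1-\eta_2|_{L^2(\Omega)}^2$ then yields $\bigl(\tfrac1h-|g|_{W^{1,\infty}(0,1)}\bigr)|\eta_1-\eta_2|_{L^2(\Omega)}^2+|\nabla(\eta_1-\eta_2)|_{L^2(\Omega)^N}^2\le0$, so it suffices to take $h_0:=\bigl(1+|g|_{W^{1,\infty}(0,1)}\bigr)^{-1}$ — or any smaller value, in particular one small enough to also accommodate the companion a priori bounds used later — to force $\eta_1=\eta_2$; this $h_0$ depends only on $|g|_{W^{1,\infty}(0,1)}$ (at most also on the structural quantities $|\alpha|_{C^1[0,1]}$ and $\mathcal{L}^N(\Omega)$) and, in particular, not on $\nu$, $[\eta,\theta]$ or $\gamma$. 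The one genuine difficulty is precisely this low regularity of $|\nabla\theta|_\nu$, which is handled at each step by the nonnegativity and monotonicity of $\alpha'\circ\mathcal{T}_0^1$ and of $|\nabla\theta|_\nu$.
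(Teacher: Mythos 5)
The paper omits its own proof of this lemma and refers instead to \cite[Lemmas 9--11]{MR3268865}, so there is no in-paper argument to compare against; your proposal is, however, correct in substance and carries out what is the natural (and presumably the cited) route: truncate the locally Lipschitz nonlinearities via $\mathcal{T}_0^1$, solve the truncated Euler--Lagrange equation by the direct method, recover the range $0\le\eta_h^\nu\le1$ by testing with $[\eta_h^\nu-1]^+$ and $-[\eta_h^\nu]^-$, and then prove uniqueness for small $h$ from an energy identity in which the $\alpha'$-term has a favourable sign. You correctly single out that last point as the crux: since $|\nabla\theta|_\nu$ is only $L^2(\Omega)$, a Lipschitz bound $|\alpha'(\eta_1)-\alpha'(\eta_2)|\lesssim|\eta_1-\eta_2|$ would produce an unmanageable product with $|\nabla\theta|_\nu$, and it is the monotonicity of $\alpha'$ (convexity of $\alpha$, together with $\alpha'(0)=0$ and $|\nabla\theta|_\nu\ge0$) that closes both the comparison and the uniqueness steps.

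One point to tighten. As printed, the lemma quantifies over arbitrary $[\eta,\theta]\in H^1(\Omega)^2$, yet your comparison argument only produces $|[\eta_h^\nu-1]^+|_{L^2(\Omega)}\le|[\eta-1]^+|_{L^2(\Omega)}$ and $|[\eta_h^\nu]^-|_{L^2(\Omega)}\le|[\eta]^-|_{L^2(\Omega)}$, which give $0\le\eta_h^\nu\le1$ (and hence deactivation of the truncations, and then uniqueness) precisely when $0\le\eta\le1$ a.e.\ — a restriction you flag explicitly. For the recursive scheme in the proof of Theorem \ref{thAp1} this suffices, since each step has $\eta=\eta_{h,i-1}^\nu$ with values in $[0,1]$ by \eqref{kenApp00}, but the lemma statement as given is a bit stronger than your argument (or any obvious variant of it) delivers; it would be cleaner to add the hypothesis $0\le\eta\le1$ a.e.\ to the statement, matching the analogous hypothesis $[\eta,\theta]\in D_0\cap H^1(\Omega)^2$ in Lemma \ref{axLemD2}. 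A final small observation: your threshold $h_0=(1+|g|_{W^{1,\infty}(0,1)})^{-1}$, coming only from the uniqueness step, in fact depends solely on $g$, not on $|\alpha|_{C^1[0,1]}$ or $\mathcal{L}^N(\Omega)$ (the variational existence step needs no smallness of $h$); this is consistent with, and cleaner than, the dependence asserted in the lemma.
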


Next, we deal with solvability of \eqref{kenApp02}:
\begin{lem}\label{axLemD2}
For every $ \nu, h \in (0, 1) $ and  $[\eta, \theta] \in {D_{0} \cap H^{1}(\Omega)^{2}}$, there exists a unique function $ \theta_{h}^{\nu} \in H^1(\Omega) $, such that
\begin{equation}\label{axLemD2-02}
|\theta_{h}^{\nu}| \leq {|\gamma|_{L^\infty(\Gamma)}} \mbox{ a.e. in $ \Omega $.}
\end{equation} and satisfying \begin{equation}\label{kenApp02_aux}
    \frac{1}{h} \alpha_0(\eta) (\theta_{h}^{\nu} -\theta) {-\mathrm{div} \jump{\alpha(\eta) \nabla \theta_{h}^{\nu}}_{\gamma}^{\nu}} = 0 \mbox{ in $ L^2(\Omega) $.}
    \end{equation}
\end{lem}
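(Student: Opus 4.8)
The plan is to obtain $ \theta_h^\nu $ as the unique minimizer on $ L^2(\Omega) $ of a strictly convex functional whose Euler--Lagrange relation is exactly \eqref{kenApp02_aux}, and then to derive the bound \eqref{axLemD2-02} by showing that truncation does not increase this functional. \emph{Step 1 (reduction to a minimization).} Since $ [\eta, \theta] \in D_0 $ we have $ 0 \leq \eta \leq 1 $ a.e.\ in $ \Omega $, so \hyperlink{A2}{(A2)} (with $ \alpha \in C^2(\R) $ and $ \alpha_0 \in W_{\rm loc}^{1, \infty}(\R) $) gives $ \delta_\alpha \leq \alpha_0(\eta), \alpha(\eta) \in L^\infty(\Omega) $; in particular $ \alpha(\eta) \in W_0(\Omega) $, and $ \gamma \in H^{\frac{1}{2}}(\Gamma) $ by \hyperlink{A3}{(A3)}, so $ \Phi_\gamma^\nu(\alpha(\eta); {}\cdot{}) $ is a well-defined, proper (its domain contains $ [\gamma]^{\rm hm} \in H^1(\Omega) $), l.s.c.\ and convex function on $ L^2(\Omega) $. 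By Remark \ref{Rem.subdif_rx}, equation \eqref{kenApp02_aux} is equivalent to $ \frac{1}{h}\alpha_0(\eta)(\theta_h^\nu - \theta) + \partial\Phi_\gamma^\nu(\alpha(\eta); \theta_h^\nu) \ni 0 $ in $ L^2(\Omega) $, i.e.\ to $ 0 \in \partial J(\theta_h^\nu) $ for
\[
    J : z \in L^2(\Omega) \mapsto J(z) := \frac{1}{2h} \int_\Omega \alpha_0(\eta)\,|z - \theta|^2 \, dx + \Phi_\gamma^\nu(\alpha(\eta); z) \in (-\infty, \infty].
\]

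\emph{Step 2 (existence, uniqueness and the equation).} I would run the direct method on $ J $: it is proper and strongly l.s.c.\ on $ L^2(\Omega) $ (the fidelity term is $ L^2 $-continuous, since $ \alpha_0(\eta) \in L^\infty(\Omega) $, and $ \Phi_\gamma^\nu(\alpha(\eta); {}\cdot{}) $ is l.s.c.), hence weakly l.s.c.\ by convexity; it is coercive because $ J(z) \geq \frac{\delta_\alpha}{2h}|z - \theta|_{L^2(\Omega)}^2 $ and $ \Phi_\gamma^\nu \geq 0 $; and it is strictly convex because $ \alpha_0(\eta) \geq \delta_\alpha > 0 $. Hence $ J $ has a unique minimizer $ \theta_h^\nu $, which necessarily lies in $ D(\Phi_\gamma^\nu(\alpha(\eta); {}\cdot{})) = \{ z \in H^1(\Omega) \mid z = \gamma \mbox{ in } H^{\frac{1}{2}}(\Gamma) \} \subset H^1(\Omega) $. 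Since the fidelity term is everywhere defined and differentiable on $ L^2(\Omega) $ with $ L^2 $-continuous derivative $ z \mapsto \frac{1}{h}\alpha_0(\eta)(z - \theta) $, the subdifferential sum rule yields $ 0 \in \frac{1}{h}\alpha_0(\eta)(\theta_h^\nu - \theta) + \partial\Phi_\gamma^\nu(\alpha(\eta); \theta_h^\nu) $, which by Remark \ref{Rem.subdif_rx} is exactly \eqref{kenApp02_aux} in $ L^2(\Omega) $.

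\emph{Step 3 (the $ L^\infty $-bound).} Set $ M := |\gamma|_{L^\infty(\Gamma)} $ and $ w := \mathcal{T}_{-M}^M \theta_h^\nu \in H^1(\Omega) $; its trace on $ \Gamma $ is $ \mathcal{T}_{-M}^M \gamma = \gamma $ (since $ |\gamma| \leq M $ $ \mathcal{H}^{N -1} $-a.e.), so $ w \in D(\Phi_\gamma^\nu(\alpha(\eta); {}\cdot{})) $. I claim $ J(w) \leq J(\theta_h^\nu) $. Because $ |\theta| \leq M $ a.e.\ ($ [\eta, \theta] \in D_0 $), one has $ |w - \theta| \leq |\theta_h^\nu - \theta| $ a.e., so the fidelity term does not increase. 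By the usual chain rule for truncations, $ \nabla w = \nabla \theta_h^\nu $ a.e.\ on $ \{|\theta_h^\nu| < M\} $ and $ \nabla w = 0 $ a.e.\ on $ \{|\theta_h^\nu| \geq M\} $; hence, using $ \alpha(\eta) \geq 0 $ and $ |0|_\nu = 0 $, $ \int_\Omega \alpha(\eta)\,|\nabla w|_\nu \, dx \leq \int_\Omega \alpha(\eta)\,|\nabla \theta_h^\nu|_\nu \, dx $. The step needing the most care is the relaxation term $ \frac{\nu^2}{2}\int_\Omega |\nabla(z - [\gamma]^{\rm hm})|^2 \, dx $: writing $ r := \theta_h^\nu - w \in H_0^1(\Omega) $ one has $ \nabla w \cdot \nabla r = 0 $ a.e.\ in $ \Omega $; moreover $ |[\gamma]^{\rm hm}| \leq M $ a.e.\ by \hyperlink{ex1}{(ex.1)} of Remark \ref{Rem.ex_hm}, and since $ [\gamma]^{\rm hm} $ is harmonic, \eqref{harmonic01} together with the density of $ C_{\rm c}^1(\Omega) $ in $ H_0^1(\Omega) $ gives $ \int_\Omega \nabla[\gamma]^{\rm hm} \cdot \nabla r \, dx = 0 $; expanding the square then shows $ \int_\Omega |\nabla(\theta_h^\nu - [\gamma]^{\rm hm})|^2 \, dx = \int_\Omega |\nabla(w - [\gamma]^{\rm hm})|^2 \, dx + \int_\Omega |\nabla r|^2 \, dx \geq \int_\Omega |\nabla(w - [\gamma]^{\rm hm})|^2 \, dx $. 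Adding the three inequalities gives $ J(w) \leq J(\theta_h^\nu) $, so by uniqueness of the minimizer $ w = \theta_h^\nu $, i.e.\ $ |\theta_h^\nu| \leq M $ a.e., which is \eqref{axLemD2-02}. \hfill $ \Box $
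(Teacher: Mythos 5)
Your proof is correct, and for the $L^\infty$-bound \eqref{axLemD2-02} you take a genuinely different route than the paper. Both proofs begin the same way: the unique solvability of \eqref{kenApp02_aux} is obtained by minimizing the strictly convex, coercive functional $J(z) = \frac{1}{2h}\int_\Omega \alpha_0(\eta)|z-\theta|^2\,dx + \Phi_\gamma^\nu(\alpha(\eta);z)$ over $L^2(\Omega)$ (note: the paper's display \eqref{axLem2-03} writes $\sqrt{\alpha(\eta)}$ in the fidelity term, but \eqref{kenApp02_aux} clearly requires $\alpha_0(\eta)$, so you have silently corrected what appears to be a typo). For the range constraint, the paper observes that the constants $\pm|\gamma|_{L^\infty(\Gamma)}$ are super-/sub-solutions of the Euler--Lagrange equation \eqref{kenApp02_aux} (using $[\nabla|\cdot|_\nu](0)=0$ and the harmonicity of $[\gamma]^{\rm hm}$), subtracts, multiplies by $[\theta_h^\nu \mp |\gamma|_{L^\infty(\Gamma)}]^\pm \in H_0^1(\Omega)$, and integrates by parts — a Stampacchia-type argument carried out at the PDE level. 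You instead stay entirely within the variational framework: you show that the truncation $w=\mathcal{T}_{-M}^{M}\theta_h^\nu$ satisfies $J(w)\le J(\theta_h^\nu)$ and invoke strict convexity. The two are essentially dual versions of the same mechanism, but your version requires extra care with the quadratic relaxation term $\frac{\nu^2}{2}\int_\Omega|\nabla(z-[\gamma]^{\rm hm})|^2\,dx$, which you handle via the Pythagorean decomposition $\theta_h^\nu - [\gamma]^{\rm hm} = (w - [\gamma]^{\rm hm}) + r$ with $r\in H_0^1(\Omega)$, $\nabla w\cdot\nabla r=0$ pointwise (Stampacchia chain rule), and $\int_\Omega\nabla[\gamma]^{\rm hm}\cdot\nabla r\,dx=0$ (harmonicity plus density of $C_c^1(\Omega)$ in $H_0^1(\Omega)$), so the cross term vanishes — this is the one genuinely non-obvious step, and you have it right. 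Incidentally, the remark that $|[\gamma]^{\rm hm}|\le M$ via \hyperlink{ex1}{(ex.1)} is not actually used in your orthogonality argument and can be dropped. The variational route has the small advantage that it never needs to verify the super-/sub-solution inequalities or manipulate the divergence-form operator directly, and it exposes clearly which structural features matter: $|\gamma|\le M$ on $\Gamma$ so truncation preserves the boundary condition, $|0|_\nu=0$, and the orthogonality supplied by harmonicity.
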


\noindent
\textbf{Proof.} By the regularity of $[\eta,\theta]$, a solution to \eqref{kenApp02_aux} is a minimizer to the following proper, l.s.c., coercive and strictly convex function on $ L^2(\Omega) $:
\begin{equation}\label{axLem2-03}
z \in L^2(\Omega) \mapsto \frac{1}{2h} |\sqrt{{\ts \alpha(\eta)}}(z -\theta)|_{L^2(\Omega)}^2 +{\Phi^{\nu}_{\gamma}(\alpha(\eta); z)} \in [0, \infty].
\end{equation}
Therefore, the existence and uniqueness of $\theta_{h}^{\nu} \in D({\Phi^{\nu}_{\gamma}(\alpha(\eta);{}\cdot{})})$ follows immediately.

Thus, all we have to do is reduced to verify the range constraint property \eqref{axLemD2-02}. To this end, we first note that the constant $ |\gamma|_{L^\infty(\Gamma)} $ is a supersolution to \eqref{kenApp02_aux}; i.e. it satisfies:
\begin{equation}\label{axLem2-04}
{\frac{1}{h} \alpha_0(\eta) (|\gamma|_{L^\infty(\Gamma)} -\theta) -\mathrm{div} \, \jump{\alpha(\eta) \nabla |\gamma|_{L^\infty(\Gamma)}}_{\gamma}^{\nu}} \geq 0 \mbox{ a.e. in $ \Omega $.}
\end{equation}
Besides, we take the difference from \eqref{kenApp02_aux} to \eqref{axLem2-04}, and multiply  both sides by $ [\theta_{h}^{\nu} - {|\gamma|_{L^\infty(\Gamma)}}]^+ $. Then, with (\hyperlink{A2}{A2}) and the trace condition $ \theta_{h, i}^{\nu} = {\gamma} $ in $ H^{\frac{1}{2}}(\Gamma) $ in mind, one can observe that:
\begin{equation}\label{axLem2-05}
|[\theta_{h}^{\nu} - {|\gamma|_{L^\infty(\Gamma)}}]^+|_{L^2(\Omega)}^2 \leq 0, \mbox{ i.e. } \theta_{h}^{\nu} \leq {|\gamma|_{L^\infty(\Gamma)}} \mbox{ a.e. in $ \Omega $,}
\end{equation}
via the following computations:
\begin{equation}\label{axLem2-06}
    [\theta_{h|_\Gamma}^{\nu} - {|\gamma|_{L^\infty(\Gamma)}}]^+ = [{\gamma -|\gamma|_{L^\infty(\Gamma)}}]^+ = 0 \mbox{ in $ H^{\frac{1}{2}}(\Gamma) $,}
\end{equation}
and
{
\begin{align}\label{axLem2-07}
\frac{\delta_\alpha}{h} |[\theta_{h, i}^{\nu} & -|\gamma|_{L^\infty(\Gamma)}]^+|_{L^2(\Omega)}^2
\nonumber
\\
\leq & \int_\Omega \mathrm{div} \, \bigl( \jump{\alpha(\eta_{h, i}^{\nu}) \nabla \theta_{h, i}^{\nu}}_{\gamma}^{\nu} -\jump{\alpha(\eta_{h, i}^{\nu}) \nabla |\gamma|_{L^\infty(\Gamma)}}_{\gamma}^{\nu} \bigr) [\theta_{h, i}^{\nu} -|\gamma|_{L^\infty(\Gamma)}]^+ \, dx
\nonumber
\\
= & -\int_\Omega \alpha(\eta_{h, i}^{\nu}) \bigl( [\nabla |{}\cdot{}|_\nu] ( \nabla [\theta_{h, i}^{\nu} -|\gamma|_{L^\infty(\Gamma)}]^+  ) \bigl) \cdot \nabla [\theta_{h, i}^{\nu} -|\gamma|_{L^\infty(\Gamma)}]^+ \, dx
\nonumber
\\
& -\nu^2 \int_\Omega |\nabla [\theta_{h, i}^{\nu} -|\gamma|_{L^\infty(\Gamma)}]^+|^2 \, dx \leq 0.
\end{align}
}
Analogously, the following {inequality} holds:
\begin{equation}\label{axLem2-09}
|[-{|\gamma|_{L^\infty(\Gamma)}} -\theta_{h}^{\nu}]^+|_{L^2(\Omega)}^2 \leq 0, \mbox{ i.e. } \theta_{h}^{\nu} \geq - {|\gamma|_{L^\infty(\Gamma)}} \mbox{ a.e. in $ \Omega $,}
\end{equation}
by noting that $-{|\gamma|_{L^\infty(\Gamma)}}$ is a subsolution to \eqref{kenApp02_aux} and by similar computations to those in \eqref{axLem2-06} and \eqref{axLem2-07}.

Finally, the range constraint property \eqref{axLemD2-02} is a direct consequence from \eqref{axLem2-05} and \eqref{axLem2-09}. \hfill $ \Box $
\bigskip

\noindent
\textbf{Proof of Theorem \ref{thAp1}.}
Existence and uniqueness of the approximating solution is obtained, immediately, by applying recursively Lemmas \ref{axLemD1} and \ref{axLemD2}. We denote it by $ \{ [\eta_{h, i}^{\nu},\theta_{h, i}^{\nu}] \}_{i = 1}^\infty {\subset D_{0}^\mathrm{rx}} $.
\bigskip

In this light, we here demonstrate only the inequalities \eqref{ene-inq} and \eqref{lem2-inq}. To this end, {we first take $w = \eta_{h, i}^{\nu} -\eta_{h, i -1}^{\nu} $ as test function in \eqref{kenApp01}}. Then, from (\hyperlink{A1}{A1}) and (\hyperlink{A2}{A2}) and Taylor's theorem on $ G $, it is seen that:
\begin{align}\label{thAp1-01}
\frac{1}{h} |\eta_{h, i}^{\nu} - & \eta_{h, i -1}^{\nu}|_{L^2(\Omega)}^2 +\frac{1}{2} \int_\Omega |\nabla \eta_{h, i}^{\nu}|^2 \, dx -\frac{1}{2} \int_\Omega |\nabla \eta_{h, i -1}^{\nu}|^2 \, dx
\nonumber
\\
+ & \int_\Omega \left( G(\eta_{h, i}^{\nu}) -G(\eta_{h, i -1}^{\nu}) -\frac{|g'|_{L^\infty(0, 1)}}{2} |\eta_{h, i}^{\nu} -\eta_{h, i -1}^{\nu}|^2 \right) \, dx
\nonumber
\\
+ & \int_\Omega \bigl( \alpha(\eta_{h, i}^{\nu}) -\alpha(\eta_{h, i -1}^{\nu}) \bigr) {|\nabla \theta_{h, i -1}^{\nu}|_{\nu}} \, dx \leq 0,
\\
& \qquad \mbox{for all $ h \in (0, h_0) $ and $ i = 1, 2, 3, \dots $.}
\nonumber
\end{align}
So, setting:
\begin{equation*}
h_* := \min \left\{ h_0, ~ \frac{1}{1 +|g|_{W^{1, \infty}(0, 1)}} \right\},
\end{equation*}
the inequality \eqref{thAp1-01} can be reduced to:
\begin{align}\label{thAp1-02}
\frac{1}{2h} |\eta_{h, i}^{\nu} - & \eta_{h, i -1}^{\nu}|_{L^2(\Omega)}^2 +\left( \frac{1}{2} \int_\Omega |\nabla \eta_{h, i}^{\nu}|^2 \, dx +\int_\Omega G(\eta_{h, i}^{\nu}) \, dx \right)
\nonumber
\\
- &   \left( \frac{1}{2} \int_\Omega |\nabla \eta_{h, i -1}^{\nu}|^2 \, dx +\int_\Omega G(\eta_{h, i -1}^{\nu})  \, dx \right) {+ \int_\Omega \alpha(\eta_{h, i}^{\nu}) |\nabla \theta_{h, i -1}^{\nu}|_\nu} \, dx
\\
\leq & \int_\Omega \alpha(\eta_{h, i -1}^{\nu}) {|\nabla \theta_{h, i -1}^{\nu}|_{\nu}} \, dx, \mbox{ for all $ h \in (0, h_*) $ and $ i = 1, 2, 3, \dots $.}
\nonumber
\end{align}

Next, multiplying both sides of \eqref{kenApp02} by $ {\theta_{h, i}^{\nu} - \theta_{h, i -1}^{\nu}} \in H_0^1(\Omega) $, one can see that:
\begin{align*}
\frac{1}{h} |{\textstyle \sqrt{\alpha_0(\eta_{h, i}^{\nu})}} + &  {\int_\Omega \jump{\alpha(\eta_{h, i}^{\nu})\nabla \theta_{h, i}^{\nu}}_{\gamma}^{\nu} \cdot \nabla (\theta_{h, i}^{\nu} - \theta_{h, i -1}^{\nu}) \, dx = } 0,
\\
& \qquad \mbox{for all $ h \in (0, 1) $ and $ i = 1, 2, 3, \dots $.}
\end{align*}
Subsequently, invoking {(\hyperlink{AP1}{AP1}), Remark \ref{Rem.subdif_rx} and Young's inequality}, it is inferred that:
{
\begin{align}\label{thAp1-03}
\frac{1}{h} | & {\textstyle \sqrt{\alpha_0(\eta_{h, i}^{\nu})}} (\theta_{h, i}^{\nu} -\theta_{h, i -1}^{\nu})|_{L^2(\Omega)}^2 + \left( \int_\Omega \alpha(\eta_{h, i}^{\nu}) |\nabla \theta_{h, i}^{\nu}|_\nu dx + \frac{\nu^{2}}{2} \int_\Omega |\nabla (\theta_{h,i}^{\nu} - [\gamma]^{\rm hm})|^2 \, dx  \right)
\nonumber
\\
& \leq \int_\Omega \alpha(\eta_{h, i}^{\nu}) |\nabla \theta_{h, i-1}^{\nu}|_\nu dx + \frac{\nu^{2}}{2} \int_\Omega  |\nabla (\theta_{h, i -1}^{\nu} - [\gamma]^{\rm hm})|^2 \, dx
\\
& \qquad \mbox{for all $ h \in (0, h_*) $ and $ i = 1, 2, 3, \dots $.}
\nonumber
\end{align}
}
Now, we deduce \eqref{ene-inq} as the result of the sum of \eqref{thAp1-02} and \eqref{thAp1-03}. Finally, \eqref{lem2-inq} is obtained by multiplying  both sides of \eqref{ene-inq} by $ ih $ and taking the sum with respect to $ i \in \{ 1, \dots, m \} $. \hfill $ \Box $
\bigskip

\noindent
\textbf{Proof of Theorem \ref{thAp2}.}
First, we fix, arbitrary, the value of index $ i \in \{ 1, \dots, m \} $ with the maximum $ m \in \N $, and fix the pair of function $ [w_0, v_0] \in {D(\mathcal{F}^{\nu}_{\gamma}(\cdot{}))} $, {i.e.
\begin{equation*}
[w_{0}, v_{0}] \in H^{1}(\Omega)^{2} \ \ \mbox{ and }\ \ v_{0} = \gamma \mbox{ in } H^{\frac{1}{2}}(\Gamma).
\end{equation*}
}
Besides, we define a large constant $ R_* $ as follows:
\begin{align*}
R_* := \frac{1}{\delta_\alpha^4} \cdot & (1 +|\alpha_0|_{W^{1, \infty}(0, 1)})^2
\cdot (1 +|\alpha|_{C^{1}[0, 1]})^2
\cdot (1 +|G|_{C^{1}[0, 1]})^2 \cdot
\\
\cdot & (1 +\mathcal{L}^N(\Omega))^2
\cdot (1 +|c|_{L^{\infty}(0, 1)})^2
\cdot (1 +{|\gamma|_{L^{\infty}(\Gamma)}})^2
    \cdot {(1 +{C_{\Omega}^{\rm hm}})^{2}},
\end{align*}
{where $ C_{\Omega}^{\rm hm} $ is the constant as in \eqref{hm_const}, in the case when $ U = \Omega $.}

On this basis, we take $ w = \eta_{h, i}^{\nu} -w_0 \in H^1(\Omega) \cap L^\infty(\Omega) $ in \eqref{kenApp01}. Then, by {(\hyperlink{A1}{A1}) and (\hyperlink{A2}{A2}) and Young's inequality}, we have:
\begin{align*}
\frac{1}{2h} \bigl( |\eta_{h, i}^{\nu} & -w_0|_{L^2(\Omega)}^2 -|{\eta_{h, i -1}^{\nu}} -w_0|_{L^2(\Omega)}^2 \bigr) +\frac{1}{2} \int_\Omega |\nabla \eta_{h, i}^{\nu}|^2 \, dx
\\
&  +\int_\Omega \bigl( \alpha(\eta_{h, i}^{\nu}) -\alpha(w_0) \bigr) |\nabla \theta_{h, i -1}^{\nu}|_{\nu} \, dx  \leq \frac{1}{2} \int_\Omega |\nabla w_0|^2 \, dx +R_*.
\end{align*}
Subsequently, by (\hyperlink{A2}{A2}) and \eqref{kenApp00}, we can compare $ \alpha(\eta_{h, i}^{\nu}) \geq \frac{\delta_\alpha}{|\alpha|_{C[0, 1]}} \alpha(\eta_{h, i -1}^{\nu}) $, and can compute that:
\begin{align}\label{thAp2-01}
\frac{1}{2h} \bigl( |\eta_{h, i}^{\nu} & -w_0|_{L^2(\Omega)}^2 -|{\eta_{h, i -1}^{\nu}} -w_0|_{L^2(\Omega)}^2 \bigr)
\nonumber
\\
& +\frac{1}{2} \int_\Omega |\nabla \eta_{h, i}^{\nu}|^2 \, dx  +\frac{\delta_\alpha}{|\alpha|_{C[0, 1]}} \int_\Omega \alpha(\eta_{h, i -1}^{\nu}) |\nabla \theta_{h, i -1}^{\nu}|_\nu \, dx
\\
& -|\alpha|_{C[0, 1]} \int_\Omega |\nabla \theta_{h, i -1}^{\nu}|_{\nu} \, dx \leq R_* (1 +|w_0|_{H^1(\Omega)}^2).
\nonumber
\end{align}

Next, we multiply both sides of \eqref{kenApp02} by ${ [(\theta_{h, i}^{\nu} -v_0)/ \alpha_{0}(\eta_{h,i}^{\nu})]} \in H_0^1(\Omega) $. Then, applying \eqref{kenApp00} and Green's formula, we can see that:
{
\begin{align}\label{thAp2-02}
\frac{1}{h} \bigl( \theta_{h, i}^{\nu} - \theta_{h, i-1}^{\nu}, \theta_{h, i}^{\nu} - v_0 \bigr)_{L^2(\Omega)} +\int_\Omega \jump{\alpha(\eta_{h, i}^{\nu}) \nabla \theta_{h, i}^{\nu}}_{\gamma}^{\nu} \cdot \nabla \left( \frac{(\theta_{h, i}^{\nu} -v_0)}{\alpha_0(\eta_{h, i}^{\nu})} \right) \, dx = 0.
\end{align}
}
Also, invoking (\hyperlink{A2}{A2}), (\hyperlink{A3}{A3}), (\hyperlink{AP1}{AP1}), and (\hyperlink{AP2}{AP2}), one can check that:
{
\begin{align}\label{thAp2-03}
\frac{1}{h} \bigl( \theta_{h, i}^{\nu} - \theta_{h, i-1}^{\nu}, \theta_{h, i}^{\nu} -v_0 \bigr)_{L^2(\Omega)} \geq \frac{1}{2h} \bigl( |\theta_{h, i}^{\nu} -v_0|_{L^2(\Omega)}^2 -|\theta_{h, i -1}^{\nu} -v_0|_{L^2(\Omega)}^2 \bigr)
\end{align}
}
and
{
\begin{align}\label{kenAp2-04}
\int_\Omega \jump{\alpha(\eta_{h, i}^{\nu})\nabla \theta_{h, i}^{\nu}}_{\gamma}^{\nu} & \cdot \nabla \left( \frac{\theta_{h, i}^{\nu} -v_0}{\alpha_0(\eta_{h, i}^{\nu})} \right) \, dx
\nonumber
\\
& \hspace{-8ex} =  \int_\Omega \alpha(\eta_{h, i}^{\nu}) [\nabla |{}\cdot{}|_\nu](\nabla \theta_{h,i}^{\nu}) \cdot \left( \frac{\nabla\theta_{h, i}^{\nu}}{\alpha_0(\eta_{h, i}^{\nu})} \right) \, dx
\nonumber
\\
& +\nu^{2} \int_\Omega \nabla(\theta_{h, i}^{\nu} - [\gamma]^{\rm hm}) \cdot \left( \frac{\nabla (\theta_{h, i}^{\nu} - [\gamma]^{\rm hm})}{\alpha_0(\eta_{h, i}^{\nu})} \right) \, dx
\\
& -I_1 -I_2 -I_3 -I_4,
\nonumber
\end{align}
}
where
{
\begin{equation}\label{thAp2-06}
I_1 := \int_\Omega \frac{\alpha(\eta_{h, i}^{\nu})}{\alpha_0(\eta_{h, i}^{\nu})} [\nabla |{}\cdot{}|_\nu](\nabla \theta_{h, i}^{\nu}) \cdot \nabla v_0 \, dx,
\end{equation}
\begin{equation}\label{thAp2-07}
I_2 := \nu^{2} \int_\Omega \frac{1}{\alpha_0(\eta_{h, i}^{\nu})} \nabla (\theta_{h, i}^{\nu} -[\gamma]^{\rm hm}) \cdot \nabla (v_0 -[\gamma]^{\rm hm}) \, dx,
\end{equation}
\begin{equation}\label{thAp2-08}
I_3 := \nu^{2} \int_\Omega \frac{\alpha_0'(\eta_{h, i}^{\nu})}{\alpha_0(\eta_{h, i}^{\nu})^{2}} \bigl(\theta_{h, i}^{\nu} -v_0 \bigr) \nabla (\theta_{h, i}^{\nu} -[\gamma]^{\rm hm}) \cdot \nabla \eta_{h, i}^{\nu} \, dx,
\end{equation}
\begin{equation}\label{thAp2-09}
I_4 := \int_\Omega \frac{\alpha(\eta_{h, i}^{\nu}) \alpha_0'(\eta_{h, i}^{\nu})}{\alpha_0(\eta_{h, i}^{\nu})^2} \bigl(\theta_{h, i}^{\nu} -v_0 \bigr) [\nabla |{}\cdot{}|_\nu](\nabla \theta_{h, i}^{\nu}) \cdot \nabla \eta_{h, i}^{\nu} \, dx.
\end{equation}
}
Here, invoking (\hyperlink{A2}{A2}), (\hyperlink{A3}{A3}), (\hyperlink{AP1}{AP1}), (\hyperlink{AP2}{AP2}), \eqref{hm_const}, \eqref{kenApp00}, and H\"{o}lder's  and Young's inequalities, we can estimate the four terms $ I_k $, $ k = 0, 1, 2, 3 $, as follows:
{
\begin{align}\label{thAp2-11}
|I_1| \leq & \frac{|\alpha|_{C[0, 1]}|c|_{L^\infty(0, 1)}}{\delta_\alpha} \int_\Omega |\nabla v_0| \, dx
\nonumber
\\
\leq & \frac{|\alpha|_{C[0, 1]}|c|_{L^\infty(0, 1)} (1 +\mathcal{L}^N(\Omega))}{\delta_\alpha} \cdot |v_0|_{H^1(\Omega)}
\nonumber
\\
\leq & R_* \left( 1 +|v_0|_{H^1(\Omega)}^2 \right),
\end{align}
\begin{align}\label{thAp2-12}
|I_2| \leq & \frac{\nu^{2}}{4|\alpha_0|_{C[0, 1]}} \int_\Omega |\nabla (\theta_{h, i}^{\nu} -[\gamma]^{\rm hm})|^2 \, dx + \nu^{2} \cdot \frac{|\alpha_0|_{C[0, 1]}}{\delta_\alpha^2} \bigl( |\nabla (v_0 -[\gamma]^{\rm hm})|_{L^{2}(\Omega)^N}^2 \bigr)
\nonumber
\\
\leq & \frac{\nu^{2}}{4|\alpha_0|_{C[0, 1]}} \int_\Omega |\nabla (\theta_{h, i}^{\nu} -[\gamma]^{\rm hm})|^2 \, dx +2 \nu^{2} R_* \bigl( |v_0|_{H^{1}(\Omega)}^2 +|\gamma|_{H^{\frac{1}{2}}(\Gamma)}^2  \bigr),
\end{align}
\begin{align}\label{thAp-13}
|I_3| \leq & \frac{\nu^{2}}{4|\alpha_0|_{C[0, 1]}} \int_\Omega |\nabla (\theta_{h, i}^{\nu} -[\gamma]^{\rm hm})|^2 \, dx
\nonumber
\\
& +\nu^{2} \cdot |\alpha_0|_{C[0, 1]} \cdot \frac{4 |\alpha_0'|_{L^\infty(0, 1)}^2 |\gamma|_{L^\infty(\Gamma)}^2}{\delta_\alpha^4} \int_\Omega |\nabla \eta_{h, i}^{\nu}|^2 \, dx
\nonumber
\\
\leq & \frac{\nu^{2}}{4|\alpha_0|_{C[0, 1]}} \int_\Omega |\nabla (\theta_{h, i}^{\nu} -[\gamma]^{\rm hm})|^2 \, dx +4 \nu^{2} |\alpha_0|_{C[0, 1]} R_* \int_\Omega |\nabla \eta_{h, i}^{\nu}|^2 \, dx.
\end{align}
}
Meanwhile, putting:
\begin{equation}\label{thAp2-20}
A_1 := \frac{|\alpha_0|_{C[0, 1]} |\alpha|_{C[0, 1]}}{\delta_\alpha} ~ (\leq \delta_\alpha R_*^{\frac{1}{2}}),
\end{equation}
the last term $ I_4 $ can be estimated as:
\begin{align}\label{thAp2-21}
|I_4| \leq & \frac{1}{8A_1} \int_\Omega |\nabla \eta_{h, i}^{\nu}|^2 \, dx +8 A_1 R_*.
\end{align}
In addition, multiplying both sides of \eqref{thAp2-02} by $ A_1 $ and applying \eqref{thAp2-03}--\eqref{thAp2-21} yields
{
\begin{align}\label{thAp2-22}
\frac{A_1}{2h} & \left(|\theta_{h, i}^{\nu} -v_0|_{L^2(\Omega)}^2 |\theta_{h, i -1}^{\nu} -v_0|_{L^2(\Omega)}^2 \right)
    \nonumber
\\
& +|\alpha|_{C[0, 1]} \int_\Omega |\nabla \theta_{h, i}^{\nu}|_{\nu} \, dx +\frac{|\alpha|_{C[0, 1]}}{\delta_\alpha} \cdot \frac{\nu^{2}}{2} \int_\Omega |\nabla (\theta_{h, i}^{\nu} -[\gamma]^{\rm hm})|^2 \, dx
\nonumber
\\
\leq & \left( 4 \nu^{2} |\alpha_0|_{C[0, 1]} A_1 R_* + \frac{1}{8} \right) \int_\Omega |\nabla \eta_{h, i}^{\nu}|^2 \, dx + 9 A_{1}^{2} R_* \left( 1 +|v_0|_{H^{1}(\Omega)}^2 +|\gamma|_{H^{\frac{1}{2}}(\Gamma)}^2  \right).
\end{align}
}

Now, we take the sum of \eqref{thAp2-01} and \eqref{thAp2-22}. Then,
{
\begin{align}\label{thAp2-23}
\frac{1}{2h} \bigl( |\eta_{h, i}^{\nu} & -w_0|_{L^2(\Omega)}^2 -|\eta_{h, i -1}^{\nu} -w_0|_{L^2(\Omega)}^2 \bigr) +\frac{A_1}{2h} \bigl( |\theta_{h, i}^{\nu} -v_0|_{L^2(\Omega)}^2 -|\theta_{h, i -1}^{\nu} -v_0|_{L^2(\Omega)}^2 \bigr)
\nonumber
\\
& + \left( \frac{3}{8} - 4 \nu^{2} |\alpha_0|_{C[0, 1]} A_1 R_* \right) \int_\Omega |\nabla \eta_{h, i}^{\nu}| \, dx +\frac{\delta_\alpha}{|\alpha|_{C[0, 1]}} \int_\Omega \alpha(\eta_{h, i -1}^{\nu}) |\nabla \theta_{h, i -1}^{\nu}|_\nu \, dx
\nonumber
\\
& +\frac{\nu^{2}}{2}\cdot \frac{|\alpha|_{C([0,1])}}{\delta_{\alpha}} \int_\Omega |\nabla (\theta_{h, i}^{\nu} -[\gamma]^{\rm hm})|^2 \, dx
\nonumber
\\
& +|\alpha|_{C[0, 1]} \left( \int_\Omega |\nabla \theta_{h, i}^{\nu}|_\nu \, dx -\int_\Omega |\nabla \theta_{h, i -1}^{\nu}|_\nu \, dx  \right)
\nonumber
\\
\leq & 10 A_1^{2} R_* \bigl( 1 +|w_0|_{H^1(\Omega)}^2 +|v_0|_{H^1(\Omega)}^2 +|\gamma|_{H^{\frac{1}{2}}(\Gamma)}^2 \bigr).
\end{align}
}
Besides, letting
{
\begin{equation*}
\begin{array}{c}
    \min \left\{ \frac{1}{2}, \frac{\delta_{\alpha}}{|\alpha|_{C([0,1])}} \right\} \in {\textstyle \left( 0, \frac{1}{2} \right]}, \mbox{and \ } A_3 := 11A_{1}^{2}R_{\ast},
\end{array}
\end{equation*}
}
inequality \eqref{thAp2-23} can be reduced to
{
\begin{align}\label{thAp2-24}
& \hspace{-4ex} \frac{1}{2h} \bigl( |\eta_{h, i}^{\nu} -w_0|_{L^2(\Omega)}^2 -|\eta_{h, i -1}^{\nu} -w_0|_{L^2(\Omega)}^2 \bigr) +\frac{A_1}{2h} \bigl( |\theta_{h, i}^{\nu} -v_0|_{L^2(\Omega)}^2 -|\theta_{h, i -1}^{\nu} -v_0|_{L^2(\Omega)}^2 \bigr)
\nonumber
\\
& +A_2 \left( \frac{1}{2} \int_\Omega |\nabla \eta_{h, i}^{\nu}|^2 \, dx +\int_\Omega \alpha(\eta_{h, i -1}^{\nu}) |\nabla \theta_{h, i -1}^{\nu}|_\nu \, dx +\frac{\nu^{2}}{2} \int_\Omega |\nabla (\theta_{h, i}^{\nu} -[\gamma]^{\rm hm})|^2 \, dx \right)
\nonumber
\\
& +A_{2} \int_{\Omega} G(\eta_{h,i-1}^{\nu}) dx + |\alpha|_{C[0, 1]} \left( \int_\Omega |\nabla \theta_{h, i}^{\nu}|_\nu \, dx -\int_\Omega |\nabla \theta_{h, i -1}^{\nu}|_\nu \, dx  \right)
\nonumber
\\
\leq & A_3 \bigl( 1 +|w_0|_{H^1(\Omega)}^2 +|v_0|_{H^1(\Omega)}^2 +|\gamma|_{H^{\frac{1}{2}}(\Gamma)}^2 \bigr).
\end{align}
}

The conclusion \eqref{thAp2-00} of this theorem is obtained by multiplying both sides of \eqref{thAp2-24} by $h$ and taking the sum from the case of $ i = 1 $ up to $ i = m $. \hfill $ \Box $
\section{Proof of Main Theorems}
\subsection{Proof of Main Theorem 1}
\ \ \vspace{-3ex}

\noindent
{
In this section, we assume that $[\eta_0, \theta_0] \in D_0$.}
Though the proof of Main Theorem 1 is very similar to that of \cite[Main Theorem 1]{MR3268865}, we give it for the sake of completeness. The first goal in this Section is to find a solution $ [\eta, \theta] $ to $(\mbox{S})$, satisfying {(S4)}. {First of all, by Remark \ref{noteRx}, we can find a sequence $ \{ \tilde{\eta}_{0}^{n}, \tilde{\theta}_{0}^{n} \}_{n = 1}^\infty $  such that
\begin{equation}\label{kenV-01}
\left\{ \begin{array}{l}
[\tilde{\eta}_{0}^{n}, \tilde{\theta}_{0}^{n}] \in D_0^\mathrm{rx},\ \ \mbox{ \ for $ n = 1, 2, 3, \dots $,}
\\[1ex]
[\tilde{\eta}_{0}^{n}, \tilde{\theta}_{0}^{n}] \to [\eta_{0}, \theta_0] \mbox{ in $ L^2(\Omega)^{2} $ \ as $ n \to \infty $.
}
\end{array} \right.
\end{equation}
}

Let $ \nu_*, h_* \in (0, 1) $ be the constants as in Theorems \ref{thAp1} and \ref{thAp2}, and let $ \{ \nu_n \}_{n = 1}^\infty \subset (0, \nu_*) $ and $ \{ h_n \}_{n = 1}^\infty $ be sequences such that
{
\begin{equation}\label{6-1}
\left\{ \begin{array}{l}
0 < \nu_{n+1} < \nu_{n} < \nu_{\ast}2^{-n},\ \ 0 < h_{n+1} < h_{n} < h_{\ast} 2^{-n}, \vspace{3mm}\\
\ds 0 \le h_{n} \mathcal{F}^{\nu_{n}}_{\gamma}(\tilde{\eta}_{0}^{n}, \tilde{\theta}_{0}^{n}) < 2^{-n},
\end{array}
\right. \mbox{ for all } n \in \mathbb{N}.
\end{equation}
}

\medskip
For any $ n \in \N $, we denote by {$ \{ [\tilde{\eta}_{h_{n},i}^{\nu_{n}}, \tilde{\theta}_{h_{n},i}^{\nu_{n}}] \}_{i = 1}^\infty $} the unique solution to $ \mbox{(\hyperlink{AP_h^nu}{AP}$_{h_n}^{\nu_n} $)} $ with initial data {$ [\tilde{\eta}_0^{n}, \tilde{\theta}_{0}^{n}] $}, provided by Theorem \ref{thAp1}.
{
We define three different kinds of time-interpolations $ [\overline{\eta}_n, \overline{\theta}_n] \in L_{\rm loc}^\infty([0, \infty); H^1(\Omega))^2 $, $ [\underline{\eta}_n, \underline{\theta}_n] \in L_{\rm loc}^\infty([0, \infty); H^1(\Omega))^2 $ and $ [\widehat{\eta}_n, \widehat{\theta}_n] \in W_{\rm loc}^{1, \infty}([0, \infty); H^1(\Omega))^2 $, by letting
\begin{equation}\label{kenV-20}
\left \{
\begin{array}{ll}
\ds [\overline{\eta}_{n}(t), \overline{\theta}_{n}(t)] := [\tilde{\eta}_{h_{n},i}^{\nu_{n}}, \tilde{\theta}_{h_{n},i}^{\nu_{n}}], & \mbox{if $ t \in ((i -1)h_{n}, ih_{n}] \cap [0, \infty) $ with some $ i \in \mathbb{Z} $,}
\\[2ex]
[\underline{\eta}_{n}(t), \underline{\theta}_{n}(t)] := [\tilde{\eta}_{h_{n},i-1}^{\nu_{n}}, \tilde{\theta}_{h_{n},i-1}^{\nu_{n}}], & \mbox{if $ t \in [(i -1)h_{n}, ih_{n}) $ with some $ i \in \N $,}
\\[1ex]
\multicolumn{2}{l}{
\ds [\widehat{\eta}_{n}(t), \widehat{\theta}_{n}(t)] := \frac{ih_{n} -t}{h_{n}}[\tilde{\eta}_{h_{n},i-1}^{\nu_{n}}, \tilde{\theta}_{h_{n},i-1}^{\nu_{n}}] + \frac{t-(i -1)h_{n}}{h_{n}}[\tilde{\eta}_{h_{n},i}^{\nu_{n}}, \tilde{\theta}_{h_{n},i}^{\nu_{n}}],
}
\\[1ex]
& \mbox{if $ t \in [(i -1)h_{n}, ih_{n}) $ with some $ i \in \N $,}
\end{array}
\right.
\end{equation}
for all $ t \geq 0 $.
}
Then, in the light of (\ref{kenV-01}) and Lemmas \ref{axLemD1} and \ref{axLemD2}, we immediately see that
\begin{equation*}
    \bigl\{ [\overline{\eta}_n(t), \overline{\theta}_n(t)], [\underline{\eta}_n(t), \underline{\theta}_n(t)], [\widehat{\eta}_n(t), \widehat{\theta}_n(t)] \bigr\} \subset { D_{0}^{\rm rx}}, \mbox{ for all $ t \geq 0 $.}
\end{equation*}
In particular,
\begin{equation}\label{app-rist}
\begin{array}{c}
\begin{cases}
\ds 0 \leq \overline{\eta}_n(t) \leq 1, ~ 0 \leq \underline{\eta}_n(t) \leq 1, ~ 0 \leq \widehat{\eta}_n(t) \leq 1,
\\[0.5ex]
\ds |\overline{\theta}_n(t)| \leq {|\gamma|_{L^\infty(\Gamma)}}, ~ |\underline{\theta}_n(t)| \leq {|\gamma|_{L^\infty(\Gamma)}},~|\widehat{\theta}_n(t)| \leq {|\gamma|_{L^\infty(\Gamma)}},
\end{cases}
\\
\\[-2ex]
\mbox{a.e. in $ \Omega $, for all $ t \geq 0 $ and $ n = 1, 2, 3, \dots $.}
\end{array}
\end{equation}
Here, from the energy-inequality (\ref{ene-inq}) in Theorem \ref{thAp1}, it follows that
\begin{align*}
\displaystyle \frac{1}{2} \int_{t_k}^{t_\ell} & |(\widehat{\eta}_n)_t(\tau)|_{L^2(\Omega)}^2 \, d \tau +\int_{t_k}^{t_\ell} \bigl| {\textstyle \sqrt{\alpha_0(\overline{\eta}_n(\tau))}} (\widehat{\theta}_n)_t(\tau) \bigr|_{L^2(\Omega)}^2 \, d \tau
\\
& { + \mathcal{F}^{\nu_n}_{\gamma}(\overline{\eta}_n(t_\ell), \overline{\theta}_n(t_\ell)) \le \mathcal{F}^{\nu_n}_{\gamma}(\underline{\eta}_n(t_k), \underline{\theta}_n(t_k)), }
\\
& \mbox{{where $t_{k} := kh$,} for all $ 0 \leq k \in \mathbb{Z} $ and $ k \leq  \ell \in \N $.}
\end{align*}
and hence, it is observed that
\begin{align}\label{kenV-100}
\ds \frac{1}{2} \int_s^t & |(\widehat{\eta}_n)_t(\tau)|_{L^2(\Omega)}^2 \, d \tau +\int_s^t \bigl| {\textstyle \sqrt{\alpha_0(\overline{\eta}_n(\tau))}} (\widehat{\theta}_n)_t(\tau) \bigr|_{L^2(\Omega)}^2 \, d \tau
\nonumber
\\
& {+ \mathcal{F}^{\nu_n}_{\gamma}(\overline{\eta}_n(t), \overline{\theta}_n(t)) \le \mathcal{F}^{\nu_n}_{\gamma}(\underline{\eta}_n(s), \underline{\theta}_n(s)) }
\\
& \mbox{for all $ 0 \leq s \leq t < \infty $.}
\nonumber
\end{align}
Similarly, from (\ref{lem2-inq}) in Theorem \ref{thAp1} and (\ref{thAp2-00}) in Theorem \ref{thAp2}, we can deduce that:
\begin{align}\label{kenV-04}
\ds \frac{1}{2} \int_0^t & \tau |(\widehat{\eta}_n)_t(\tau)|_{L^2(\Omega)}^2 \, d \tau + \int_0^t \tau \bigl|{\textstyle \sqrt{\alpha_0(\overline{\eta}_n(\tau))}}(\widehat{\theta}_n)_t(\tau) \bigr|_{L^2(\Omega)}^2 \, d \tau
\nonumber
\\
& \ds {+t \mathcal{F}^{\nu_n}_{\gamma}(\overline{\eta}_n(t), \overline{\theta}_n(t)) \leq \int_0^{t +h_n} \mathcal{F}^{\nu_n}_{\gamma}(\underline{\eta}_n(\tau), \underline{\theta}_n(\tau)) \, d \tau,}
\end{align}
and
\vspace{-1ex}
{
\begin{align}\label{kenV-03}
\frac{1}{2} & |\overline{\eta}_n(t) -w_0|_{L^2(\Omega)}^2 +\frac{A_1}{2} |\overline{\theta}_n(t) -v_0|_{L^2(\Omega)}^2 +A_2 \int_0^t \mathcal{F}^{\nu_n}_{\gamma}(\underline{\eta}_n(\tau), \underline{\theta}_n(t)) \, d \tau
\nonumber
\\
&
\leq \frac{1}{2}|\tilde{\eta}_0^{n}(t) -w_0|_{L^2(\Omega)}^2 +\frac{A_1}{2} |\tilde{\theta}_0^{n}(t) -v_0|_{L^2(\Omega)}^2 +
\frac{h_n}{A_{2}} \mathcal{F}^{\nu_n}_{\gamma}(\tilde{\eta}_0^{n}, \tilde{\theta}_{0}^{n})
\\
& \quad +2A_3 t \left( 1 +|w_0|_{H^1(\Omega)}^2 +|v_0|_{H^1(\Omega)}^2 +|\gamma|_{H^{\frac{1}{2}}(\Gamma)}^2 \right)
\nonumber
\\
& \qquad \mbox{for all $ 0 \leq s \leq t < \infty $,  $ [w_0, v_0] \in D_0^\mathrm{rx} $, and $ n = 1, 2, 3, \dots $.}
\nonumber
\end{align}
}

Now, taking into account (\ref{kenV-01})--(\ref{kenV-03}), we obtain the following properties:
\vspace{0.5ex}
\begin{description}
\hypertarget{1-a}{}
\item[\textmd{($\sharp$1-a)}]$ \{ [\overline{\eta}_n, \overline{\theta}_n] \}_{n = 1}^\infty $ and $ \{ [\underline{\eta}_n, \underline{\theta}_n] \}_{n = 1}^\infty $ are bounded in {$ L_{\rm loc}^\infty((0, \infty); L^2(\Omega))^2 $}, and $ \{ [\widehat{\eta}_n, \widehat{\theta}_n] \}_{n = 1}^\infty $ is bounded in {$ C_{\rm loc}((0, \infty); L^2(\Omega))^2 \cap W^{1,2}_\mathrm{loc}((0,\infty); L^2(\Omega))^2 $};
\vspace{0.5ex}
\hypertarget{1-b}{}
\item[\textmd{($\sharp$1-b)}]the function
$
t \in {(0, \infty)}  \mapsto {\mathcal{F}^{\nu_n}_{\gamma}(\overline{\eta}_n(t), \overline{\theta}_n(t))} \in \R
$
is nonincreasing on {$ (0, \infty) $}, and hence, the sequences $ \{ \mathcal{F}^{\nu_n}_{\gamma}(\overline{\eta}_n, \overline{\theta}_n) \}_{n = 1}^\infty $ and $ \{ \mathcal{F}^{\nu_n}_{\gamma}(\underline{\eta}_n, \underline{\theta}_n) \}_{n = 1}^\infty $ are bounded in {$  L_{\rm loc}^1([0, \infty)) \cap  BV_{\rm loc}((0, \infty)) $};
        \hypertarget{1-c}{}
\item[\textmd{($\sharp$1-c)}] {$h_{n} \mathcal{F}_{\gamma}^{\nu_{n}}(\tilde{\eta}_{0}^{n}, \tilde{\theta}_{0}^{n}) \to 0$} as $n \to \infty$.
\end{description}
Therefore, by compactness (see \cite{MR1857292} and \cite[Corollary 4]{MR0916688}), we can find a pair of functions $ [\eta, \theta] \in {C_\mathrm{loc}((0, \infty); L^2(\Omega))^2} $  together with subsequences (not relabeled) of
$ \{ [\overline{\eta}_n, \overline{\theta}_n] \}_{n = 1}^\infty $, $ \{ [\underline{\eta}_n, \underline{\theta}_n] \}_{n = 1}^\infty $ and
$ \{ [\widehat{\eta}_n, \widehat{\theta}_n] \}_{n = 1}^\infty $, such that

\begin{equation}\label{kenV-05}
\left\{ \hspace{-2ex} \parbox{15,5cm}{
\vspace{-2ex}
\begin{itemize}
\item $ \eta \in  { W_{\rm loc}^{1,2}((0,\infty);L^{2}(\Omega)) \cap L_{\rm loc}^\infty((0, \infty); H^1(\Omega))} $ and $ 0 \leq \eta \leq 1 $ a.e. in $ Q $,
\vspace{-1ex}
\item $\theta \in { W_{\rm loc}^{1,2}((0,\infty);L^{2}(\Omega))}$, $ |D \theta(\cdot)|(\Omega) \in {L_{\rm loc}^\infty((0, \infty))} $ and $ |\theta| \leq {|\gamma|_{L^\infty(\Gamma)}} $ a.e. in $ Q $;
\vspace{-1ex}
\end{itemize}
} \right.
\end{equation}
{
\begin{equation}\label{sm-07}
\left\{ \hspace{-2ex} \parbox{12cm}{
\vspace{-2ex}
\begin{itemize}
\item $ \widehat{\theta}_n \to \theta $ in $ {C_{\rm loc}((0, \infty); L^2(\Omega))} $, weakly in $ { W^{1,2}_{\rm loc}((0, \infty), L^2(\Omega))} $ and weakly-$*$ in $ L^\infty(Q) $,
\vspace{-1ex}
\item $ \widehat{\theta}_n(t) \to \theta(t) $ in $ L^2(\Omega) $, and weakly-$ * $ in $ BV(\Omega) $, for any $ t > 0 $,
\vspace{-3ex}
\item $ \widehat{\eta}_n \to \eta $ in $ {C_{\rm loc}((0, \infty); L^2(\Omega))} $, weakly-$ * $ in $ {L_{\rm loc}^\infty((0, \infty); H^1(\Omega))} $, and weakly-$*$ in $ L^\infty(Q) $,
\vspace{-1ex}
\item $ \widehat{\eta}_n(t) \to \eta(t) $ in $ L^2(\Omega) $, and weakly in $ H^1(\Omega)$, {for any $ t > 0 $,}
\vspace{-2ex}\end{itemize}
} \right. \mbox{ \ \ \ \ as $ n \to \infty $.}
\end{equation}}
Meanwhile, for any bounded open interval $ I \subset (0, \infty) $, it is checked that
\begin{align*}
|\overline{\eta}_n & -\widehat{\eta}_n|_{C(\overline{I}; L^2(\Omega))} +|\underline{\eta}_n -\widehat{\eta}_n|_{C(\overline{I}; L^2(\Omega))}
\nonumber
\\
& \leq 2 |\overline{\eta}_n -\underline{\eta}_n|_{L^\infty(I; L^2(\Omega))} \leq 2 \sup_{{\tau, \sigma \in I}\atop{0 \leq \tau -\sigma \leq h_n}} |\widehat{\eta}_n(\tau) -\widehat{\eta}_n(\sigma)|_{L^2(\Omega)}
\\
& \leq 4 |\widehat{\eta}_n -\eta|_{C(\overline{I}; L^2(\Omega))} +2 \sup_{{\tau, \sigma \in I}\atop{0 \leq \tau -\sigma \leq h_n}} |\eta(\tau) -\eta(\sigma)|_{L^2(\Omega)} \to 0 \mbox{ as $ n \to \infty $,}
\nonumber
\end{align*}
as well as,
\begin{equation*}
|\overline{\theta}_n -\widehat{\theta}_n|_{C(\overline{I}; L^2(\Omega))} +|\underline{\theta}_n -\widehat{\theta}_n|_{C(\overline{I}; L^2(\Omega))} \to 0 \mbox{ as $ n \to \infty $.}
\end{equation*}
Hence, we also have the following convergences
\begin{equation}\label{kenV-06}
\left\{ \hspace{-2ex} \parbox{10cm}{
\vspace{-2ex}
\begin{itemize}
\item $ \overline{\eta}_n \to \eta $ and $ \underline{\eta}_n \to \eta $ in {$ L_{\rm loc}^\infty((0, \infty); L^2(\Omega)) $}, weakly-$ * $ in {$ L_{\rm loc}^\infty((0, \infty); H^1(\Omega)) $}, and weakly-$*$ in $ L^\infty(Q) $,
\item $ \overline{\eta}_n(t) \to \eta(t) $, $ \underline{\eta}_n(t) \to \eta(t) $ in $ L^2(\Omega) $, weakly in $ H^1(\Omega) $, for any {$ t > 0 $},
\vspace{-2ex}
\end{itemize}
} \right. \mbox{ \ \ \ \ as $ n \to \infty $;}
\end{equation}
\begin{equation}\label{kenV-07}
\left\{ \hspace{-2ex} \parbox{10cm}{
\vspace{-2ex}
\begin{itemize}
\item $ \overline{\theta}_n \to \theta $ and $ \underline{\theta}_n \to \theta $ in {$ L_{\rm loc}^\infty((0, \infty); L^2(\Omega)) $}, and weakly-$*$ in $ L^\infty(Q) $,
\item $ \overline{\theta}_n(t) \to \theta(t)  $, $ \underline{\theta}_n(t) \to \theta(t) $ in $ L^2(\Omega) $, and weakly-$ * $ in $ BV(\Omega) $, for any {$ t > 0 $},
\vspace{-2ex}
\end{itemize}
} \right. \mbox{ \ \ \ \ as $ n \to \infty $.}
\end{equation}
Moreover, under subsequences if necessary, we can find a function {$ \mathcal{J}_{\gamma} \in BV_{\rm loc}((0, \infty)) $} such that:
\begin{equation}\label{kenV-08}
\begin{array}{c}
\ds {\mathcal{F}^{\nu_n}_{\gamma}}(\underline{\eta}_n, \underline{\theta}_n) \to {\mathcal{J}}_\gamma
\mbox{ \ weakly-$*$ in $ BV_{\rm loc}((0, \infty)) $, weakly-$*$ in $ L_{\rm loc}^\infty((0, \infty)) $,}
\\[1ex]
\mbox{and a.e. in $ (0, \infty) $, \ as $ n \to \infty $.}
\end{array}
\end{equation}

Next, we will show that the limit pair $ [\eta, \theta] $ satisfies the variational inequalities {in (\hyperlink{S1}{S1}) and (\hyperlink{S2}{S2})}. Let $ I $ be any bounded open interval such that  $ I \subset\subset (0, \infty) $. Then, by (\ref{kenApp01}) and (\ref{kenApp02}), the sequences in (\ref{kenV-05})--(\ref{kenV-07}) satisfy the following two variational formulas:
\begin{equation}\label{kenV-09}
\begin{array}{c}
\ds \int_I \bigl( (\widehat{\eta}_n)_t(t) +g(\overline{\eta}_n(t)), \omega(t) \bigr)_{L^2(\Omega)} \, dt +\int_I \bigl( \nabla \overline{\eta}_n(t), \nabla \omega(t) \bigr)_{L^2(\Omega)^N} \, dt
\\[1ex]
\ds +\int_I \int_{\Omega} \omega(t) \alpha'(\overline{\eta}_n(t)) {|\nabla \underline{\theta}_n(t)|_{\nu_n}} \, dt = 0,
\\[2ex]
\mbox{for any $ \omega \in L^2(I; H^1(\Omega)) \cap L^\infty(I \times \Omega) $ and any $ n \in \N $,}
\end{array}
\end{equation}
and
\begin{equation}\label{kenV-10}
\begin{array}{l}
\ds \int_I \bigl( \alpha_0(\overline{\eta}_n(t)) (\widehat{\theta}_n)_t(t), \overline{\theta}_n(t) -v(t)  \bigr)_{L^2(\Omega)} \, dt + {\Phi_{\nu_n, \gamma}^I}(\alpha(\overline{\eta}_n); \overline{\theta}_n)
\\[1ex]
\qquad \ds \leq {\Phi_{\nu_n,\gamma}^I}(\alpha(\overline{\eta}_n); v),
\mbox{ \ for any $ v \in L^2(I; H^1(\Omega)) $ and any $ n \in \N $.}
\end{array}
\end{equation}

{
    Let us take any $ v \in C(\overline{I}; L^2(\Omega)) $ with $ |Dv({}\cdot{})|(\Omega) \in L^1(I) $. Owing to (\hyperlink{Fact4}{Fact\,4}), (\ref{2.4-1}) and (\ref{kenV-05}), it is deduced that $ v \in D({\Phi_{\gamma}^I}(\alpha(\eta));{}\cdot{})) $. So, by Corollary \ref{Cor.Gamma-conv_nu}, there exists a sequence $ \{ v_n \}_{n = 1}^\infty \subset L^2(I; H^1(\Omega)) $, such that
\begin{equation*}
v_n \to v \mbox{ in $ L^2(I; L^2(\Omega)) $ and $ {\Phi_{\nu_n,\gamma}^I}(\alpha(\overline{\eta}_n); v_n) \to {\Phi_{\gamma}^I}(\alpha(\eta); v) $, \ as $ n \to \infty $.}
\end{equation*}
Then, by (\ref{kenV-05})--(\ref{kenV-07}) and Corollary \ref{Cor.Gamma-conv_nu}, letting $ n \to \infty $ in (\ref{kenV-10}), it follows that
{
\begin{equation}\label{kenV-11}
\begin{array}{ll}
\multicolumn{2}{l}{\ds \int_{I} (\alpha_{0}(\eta(t)) \theta_{t}(t), \theta(t) -v(t))_{L^{2}(\Omega)} dt + {\Phi_{\gamma}^{I}}(\alpha(\eta);\theta)}
\\
~~~~ & \ds \le \lim_{n \to \infty} \int_{I} (\alpha_{0}(\overline{\eta}_{n})(\widehat{\theta}_{n})_{t}(t), \overline{\theta}_{n}(t)-v_n(t))_{L^{2}(\Omega)} dt + \liminf_{n\to\infty} {\Phi_{{\nu}_{n},\gamma}^{I}}(\alpha(\overline{\eta}_{n}); \overline{\theta}_{n})
\\
& \ds \le \int_{I} (\alpha_{0}(\eta) \theta_{t}(t), \theta(t)-v(t))_{L^{2}(\Omega)} dt + \limsup_{n\to\infty} {\Phi_{{\nu}_{n},\gamma}^{I}}(\alpha(\overline{\eta}_{n}); \overline{\theta}_{n})
\\
& \ds \le \lim_{n \to \infty} {\Phi_{{\nu}_{n},\gamma}^{I}}(\alpha(\overline{\eta}_{n});v_n)= {\Phi_{\gamma}^{I}}(\alpha(\eta);v),
\\
\multicolumn{2}{l}{\mbox{ for any $ v \in C(\overline{I}; L^2(\Omega)) $ with $ |Dv({}\cdot{})|(\Omega) \in L^1(I) $.}}
\end{array}
\end{equation}
}
Since $ I$ was arbitrarily chosen, from (\ref{kenV-11}) we obtain that $ [\eta, \theta] $ satisfy the variational inequality {in (\hyperlink{S2}{S2})}.
\medskip

{
Next, letting $ v = \theta $ in (\ref{kenV-11}), we obtain
\begin{equation}\label{kenV-17}
\lim_{n \to \infty} {\Phi_{\nu_n,\gamma}^I}(\alpha(\overline{\eta}_n); \overline{\theta}_n) = \Phi_\gamma^I(\alpha(\eta); \theta),
\end{equation}
and hence,
{
\begin{equation}\label{kenV-14}
\begin{array}{rcl}
0 & \leq & \ds \liminf_{n \to \infty} \frac{\nu_n^2}{2} \int_I \int_\Omega |\nabla (\overline{\theta}_n(t) - [\gamma]^{\rm hm} )|^2 \, dx dt \leq \limsup_{n \to \infty} \frac{\nu_n^{2}}{2} \int_I \int_\Omega |\nabla (\overline{\theta}_n - [\gamma]^{\rm hm})|^2 \, dx dt
\\[2ex]
& \leq & \ds \lim_{n \to \infty} \Phi_{\nu_n,\gamma}^I(\alpha(\overline{\eta}_n); \overline{\theta}_n) -\liminf_{n \to \infty} \Phi_\gamma^I(\alpha(\overline{\eta}_n); \overline{\theta}_n) \leq 0.
\end{array}
\end{equation}
}
}
In particular, (\ref{kenV-17}) and (\ref{kenV-14}) imply that
\begin{equation}\label{kenV-18}
\int_I \int_\Omega \alpha(\overline{\eta}_n(t)) {|\nabla \overline{\theta}_n(t)|_{\nu_n}} \, dx dt \to { \int_I \int_{\overline{\Omega}} d \bigl[ \alpha(\eta(t)) |D \theta(t)| \bigr]_{\gamma} \, dt} \mbox{ \ as $ n \to \infty $.}
\end{equation}

Having in mind (\ref{kenV-05})--(\ref{kenV-07}), and (\ref{kenV-18}), we apply Lemma \ref{Lem.core} with
$ \beta = \alpha(\eta) $, $ \{ \beta_n \}_{n = 1}^\infty = \{ \alpha(\overline{\eta}_n) \}_{n = 1}^\infty $, $ v = \theta $, $ \{ v_n \}_{n = 1}^\infty = \{ \overline{\theta}_n \}_{n = 1}^\infty $, $ \varrho = 1 $, $ \{ \varrho_n \}_{n = 1}^\infty = \{ 1 \} $ and { $\{\gamma_{n}\}_{n=1}^{\infty} = \{\gamma\}$}. Then, we see that
{
\begin{equation}\label{kenV-34}
\int_I \int_\Omega |\nabla \overline{\theta}_n(t)|_{\nu_n} \, dx dt \to {\int_I \int_{\overline{\Omega}} [|D \theta(t)|]_{\gamma} \, dt} \mbox{ \ as $ n \to \infty $.}
\end{equation}
}
On the other hand, let us take $I=(t_0,t_1)$. From {(\hyperlink{A2}{A2})}, \eqref{ene-inq}, (\ref{kenV-01})--(\ref{kenV-20}), we deduce that
{
\begin{equation}\label{kenV-35}\begin{array}{c}
\ds \left| \int_{I}\int_{\Omega} \bigl( |\nabla \underline{\theta}_{n}|_{\nu_n} -|\nabla \overline{\theta}_{n}|_{\nu_n} \bigr) \, dxdt \right| = {h_{n} {\int_\Omega (|\nabla \underline{\theta}(t_0)_{n}|_{\nu_n}-|\nabla \overline{\theta}(t_1)_{n}|_{\nu_n})\,dx}}
\\[2ex] \ds \leq
\frac{2}{\delta_\alpha} h_n \mathcal{F}_{\nu_n}({\tilde{\eta}_{0}^{n}}, {\tilde{\theta}_{0}^{n}}) \to 0,
\mbox{ as $ n \to \infty $.}
\end{array}
\end{equation}
}
Hence, taking any $w  \in H^{1}(\Omega) \cap L^{\infty}(\Omega) $, and applying Lemma \ref{Lem.core} with $ \beta = 1 $, $ \{ \beta_{n} \}_{n = 1}^\infty = \{ 1 \} $, $ v = \theta  $, $ \{ v_n \}_{n = 1}^\infty = \{ \underline{\theta}_n \}_{n = 1}^\infty $, $ \varrho = w \alpha'(\eta) $, $ \{ \varrho_{n} \}_{n = 1}^\infty = \{ w \alpha'(\overline{\eta}_{n}) \}_{n = 1}^\infty $ and {$\{\gamma_{n}\}_{n=1}^{\infty} = \{\gamma\}$}, we have
{
\begin{equation} \label{1steq-meas}
\begin{array}{c}
\ds \int_{I} \int_{\Omega} w \alpha'(\overline{\eta}_{n}(t))|\nabla \underline{\theta}_{n}(t)|_{\nu_n} dxdt \to {\int_{I} \int_{\overline{\Omega}} d \bigl[ w \alpha'(\eta(t)) |D \theta(t)| \bigr]_{\gamma} \, dt}
\\[2ex]
\mbox{as $ n \to \infty $, \ for any $ w \in H^1(\Omega) \cap L^\infty(\Omega) $.}
\end{array}
\end{equation}
}

Due to (\ref{kenV-05})--(\ref{kenV-07}), and (\ref{1steq-meas}), letting $n \to \infty$ in (\ref{kenV-09}), it follows that
\begin{equation*}
\begin{array}{c}
\ds \int_{I} (\eta_{t}(t) + g(\eta(t)), w)_{L^{2}(\Omega)} dt + \int_{I}\bigl( \nabla \eta(t), \nabla w \bigr)_{L^2(\Omega)^N} \, dxdt
\\[2ex]
\ds + \int_{I} {\int_{\overline{\Omega}} d \bigl[ w \alpha'(\eta(t))|D\theta(t)| \bigr]_{\gamma}} \, dt = 0, \mbox{ \ for any $w \in H^{1}(\Omega) \cap L^{\infty}(\Omega)$. }
\end{array}
\end{equation*}
By the arbitrariness of $I$, $ [\eta, \theta] $ satisfies the variational inequality {in (\hyperlink{S1}{S1})}.
\bigskip

In order to finish the proof of (S4), it remains to prove the following three items:

\begin{description}
\hypertarget{2-a}{}
\item[\textmd{($\sharp$2-a)}]$ \eta \in L_{\rm loc}^2([0, \infty); H^1(\Omega)) $ and $ |D \theta({}\cdot{})|(\Omega) \in L_{\rm loc}^1([0, \infty)) $;
\vspace{0.5ex}
\hypertarget{2-b}{}
\item[\textmd{($\sharp$2-b)}]$ [\eta, \theta] \in C([0, \infty); L^2(\Omega))^2 $ and $ [\eta(0), \theta(0)] = [\eta_0, \theta_0] $ in $ L^2(\Omega)^2 $;
\hypertarget{2-c}{}
\item[\textmd{($\sharp$2-c)}]$ \mathcal{J}_{\gamma} $ obtained in (\ref{kenV-08}) is nonincreasing, and $ \mathcal{J}_{\gamma} = \mathcal{F}_{\gamma}(\eta, \theta) $ a.e on $ (0, \infty) ${, i.e.:
\begin{equation*}
\mathcal{J}_{\gamma}(t) = \frac{1}{2} \int_\Omega |\nabla \eta(t)|^2 \, dx +\int_\Omega {G}(\eta(t)) \, dx + { \int_{\overline{\Omega}} d[\alpha(\eta(t))|D \theta(t)|]_{\gamma}}, \mbox{ for a.e. $ t > 0 $.}
\end{equation*}
}
\end{description}

We fix $ t \in (0, \infty) $, $ n,\ell\in \N $ and consider (\ref{kenV-03}) with $ [h, \nu] = [h_n, \nu_n] $ and $ [w_0, v_0] = {[\tilde{\eta}_{0}^{\ell}, \tilde{\theta}_{0}^{\ell}]} $. Then, with (\hyperlink{(A2)}{A2}), (\ref{rxFE}), (\ref{kenV-01}), (\ref{6-1})--(\ref{kenV-07}) and Corollary \ref{Cor.Gamma-conv_nu} in mind, letting $ n \to \infty $, we deduce that
\begin{align}
\frac{1}{2} \bigl( |\eta(t) -\tilde{\eta}_{0}^{\ell}|_{L^2(\Omega)}^2 +A_1 |\theta(t) - \tilde{\theta}_{0}^{\ell}|_{L^2(\Omega)}^2 \bigr) +A_{2} \left( \frac{1}{2} |\nabla \eta|_{L^2(0, t; L^2(\Omega)^N)}^2 +\delta_\alpha \bigl| |D \theta({}\cdot{})|(\Omega) \bigr|_{L^1(0, t)} \right)
\nonumber
\\
 \leq \frac{1}{2} \lim_{n \to \infty} \bigl( |\overline{\eta}_n(t) -\tilde{\eta}_{0}^{\ell}|_{L^2(\Omega)}^2 +A_1 |\overline{\theta}_n(t) -\tilde{\theta}_{0}^{\ell}|_{L^2(\Omega)}^2 \bigr) +A_2 \liminf_{n \to \infty} \int_0^t \mathcal{F}_{\nu_n}(\underline{\eta}_n(\tau), \underline{\theta}_n(\tau)) \, d \tau
\nonumber
\\
 \leq \frac{1}{2} \bigl( |\tilde{\eta}_{0}^{\ell} -\eta_{0}|_{L^2(\Omega)}^2 +A_1 |\tilde{\theta}_{0}^{\ell} -\theta_{0}|_{L^2(\Omega)}^2 \bigr)
+2t A_3 \bigl( 1 +|\tilde{\eta}_{0}^{\ell}|_{H^1(\Omega)}^2 +|\tilde{\theta}_{0}^{\ell}|_{H^1(\Omega)}^2 + |\gamma|^{2}_{H^{\frac{1}{2}}(\Gamma)}\bigr),\label{kenV-30}
\end{align}
which, together with (\ref{kenV-05}), yields (\hyperlink{2-a}{$\sharp$2-a}).
\medskip

In the meantime, the following inequality
\begin{equation*}
\begin{array}{c}
\ds |\eta(t) -\eta_0|_{L^2(\Omega)}^2 + {A_1} |\theta(t) -\theta_0|_{L^2(\Omega)}^2 \leq 2 \bigl( |\eta(t) -{\tilde{\eta}_{0}^{\ell}}|_{L^2(\Omega)}^2 + {A_1} |\theta(t) - {\tilde{\theta}_{0}^{\ell}}|_{L^2(\Omega)}^2 \bigr)
\\[1ex]
\ds +2 \bigl( |{\tilde{\eta}_{0}^{\ell}} -\eta_0|_{L^2(\Omega)}^2 +{A_1} |{\tilde{\theta}_{0}^{\ell}} -\theta_0|_{L^2(\Omega)}^2 \bigr), \mbox{ \ for any $ t \in (0, \infty) $ and $ \ell \in \N $,}
\end{array}
\end{equation*}
and (\ref{kenV-30}) imply that
\begin{equation*}
\begin{array}{c}
\ds \limsup_{t \downarrow 0} \, \bigl( |\eta(t) -\eta_0|_{L^2(\Omega)}^2 + {A_1} |\theta(t) -\theta_0|_{L^2(\Omega)}^2 \bigr)
\\[1ex]
\ds \leq 4 \bigl( |{\tilde{\eta}_{0}^{\ell}} -\eta_0|_{L^2(\Omega)}^2 + {A_1} |{\tilde{\theta}_{0}^{\ell}} -\theta_0|_{L^2(\Omega)}^2 \bigr), \mbox{ \ for any $ \ell \in \N $.}
\end{array}
\end{equation*}
{
By (\ref{kenV-01}), (\ref{kenV-20}), (\ref{kenV-06}), and (\ref{kenV-07}), the above inequality implies (\hyperlink{2-b}{$\sharp$2-b}).
}
\medskip

Next, given any bounded open interval $ I \subset \hspace{-0.25ex} \subset (0, \infty) $, we take a sequence $ \{ \eta_n \}_{n = 1}^\infty \subset C^\infty(\overline{I \times \Omega}) $, such that $ \eta_n \to \eta $ in $ L^2(I; H^1(\Omega)) $ as $ n \to \infty $. We choose  $ \omega = \overline{\eta}_n -\eta_n $ in  (\ref{kenV-09}). Then, taking into account (\ref{kenV-34})--{(\ref{1steq-meas})}, and applying {{Lemma \ref{Lem.core}}} with $ \beta = 1 $, $ \{ \beta_n \}_{n = 1}^\infty = \{ 1 \} $, $ v = \theta $, $ \{ v_n \}_{n = 1}^\infty = \{ \underline{\theta}_n \}_{n = 1}^\infty $, $ \varrho = 0 $, $ \{ \varrho_n \}_{n = 1}^\infty = \{ (\overline{\eta}_n -\eta_n) \alpha'(\overline{\eta}_n) \}_{n = 1}^\infty $ and {$\{\gamma_{n}\}_{n=1}^{\infty} = \{\gamma\}$}, one can see that
\begin{equation}\label{kenV-36}
\begin{array}{rl}
\multicolumn{2}{l}{\ds\hspace{-2ex} \int_I |\nabla \eta(t)|_{L^2(\Omega)^N}^2 \, dt \leq \liminf_{n \to \infty} \int_I |\nabla \overline{\eta}_n(t)|_{L^2(\Omega)^N}^2 \, dt \leq \limsup_{n \to \infty} \int_I |\nabla \overline{\eta}_n(t)|_{L^2(\Omega)^N}^2 \, dt
}
\\[2ex]
\leq & \ds \lim_{n \to \infty} \left[ \int_I |\nabla \eta_n(t)|_{L^2(\Omega)^N}^2 \, dt -2\int_I \bigl( (\widehat{\eta}_n)_t(t) +g(\overline{\eta}_n(t)), (\overline{\eta}_n -\eta_n)(t) \bigr)_{L^2(\Omega)} \, dt \right.
\\[2ex]
& \ds \quad  \left. -2\int_I \int_\Omega (\overline{\eta}_n -\eta_n)(t) \alpha'(\overline{\eta}_n(t)) {|\nabla \underline{\theta}_n(t)|_{\nu_n}} \, dx dt \right] = \int_I |\nabla \eta(t)|_{L^2(\Omega)^N}^2\, dt.
\end{array}
\end{equation}
By (\ref{kenV-06}), (\ref{kenV-07}), (\ref{kenV-18}), (\ref{kenV-36}), and the uniform convexities of the $ L^2 $-type topologies, we obtain
{
\begin{equation}\label{kenV-36_1}
\left\{ \parbox{8.5cm}{
$ \overline{\eta}_n \to \eta $ in  $ L^2(I; H^1(\Omega)) $,
\\[1ex]
$ \ds \int_I {\mathcal{F}^{\nu_n}_{\gamma}}(\overline{\eta}_n(t), \overline{\theta}_n(t)) \, dt \to \int_I {\mathcal{F}_{\gamma}}(\eta(t), \theta(t)) \, dt $,
} \right. \mbox{as $ n \to \infty $}
\end{equation}
}
and,  by (\ref{6-1}),
\begin{equation}\label{kenV-37}
\begin{array}{rl}
\multicolumn{2}{l}{\ds \hspace{-4ex}\left| \int_I {\mathcal{F}^{\nu_n}_{\gamma}}(\underline{\eta}_n(t), \underline{\theta}_n(t)) \, dt -\int_I {\mathcal{F}_{\gamma}} (\eta(t), \theta(t)) \, dt \right|}
\\[2ex]
\leq & \ds \left| \int_I {\mathcal{F}^{\nu_n}_{\gamma}}(\overline{\eta}_n(t), \overline{\theta}_n(t)) \, dt -\int_I {\mathcal{F}^{\nu_n}_{\gamma}}(\underline{\eta}_n(t), \underline{\theta}_n(t)) \, dt \right|
\\[2ex]
& \ds \hspace{17.8ex} +\left| \int_I {\mathcal{F}^{\nu_n}_{\gamma}}(\overline{\eta}_n(t), \overline{\theta}_n(t)) \, dt -\int_I {\mathcal{F}_{\gamma}}(\eta(t), \theta(t)) \, dt \right|
\\[2ex]
\leq & \ds 2 h_n {\mathcal{F}^{\nu_n}_{\gamma}}(\eta_{0, n}, \theta_{0, n}) +\left| \int_I {\mathcal{F}^{\nu_n}_{\gamma}}(\overline{\eta}_n(t), \overline{\theta}_n(t)) \, dt -\int_I {\mathcal{F}_{\gamma}}(\eta(t), \theta(t)) \, dt \right|
\\[2ex]
\to & 0, \mbox{ as $ n \to \infty $, \ for any bounded open interval $ I \subset \hspace{-0.25ex} \subset (0, \infty) $.}
\end{array}
\end{equation}

Given now any bounded open set $ A \subset (0, \infty) $, we denote by $ \mathcal{I}_A $ the (at most countable) class of pairwise-disjoint open intervals, such that \ $ A = \bigcup_{\tilde{I} \in \mathcal{I}_A} \tilde{I} $. Here, from (\ref{kenV-37}), it can be seen that
\begin{equation*}
\begin{array}{c}
\ds \sum_{\tilde{I} \in \tilde{\mathcal{I}}} \int_{\tilde{I}} {\mathcal{F}_{\gamma}}(\eta(t), \theta(t)) \, dt \leq \liminf_{n \to \infty} \int_{A} {\mathcal{F}^{\nu_n}_{\gamma}}(\underline{\eta}_n(t), \underline{\theta}_n(t)) \, dt
\\[2ex]
\mbox{for any finite subclass $ \tilde{\mathcal{I}} \subset \mathcal{I}_A $,}
\end{array}
\end{equation*}
and accordingly,
\begin{equation}\label{kenV-38}
\begin{array}{c}
\ds \int_A {\mathcal{F}_{\gamma}}(\eta(t), \theta(t)) \, dt \leq \liminf_{n \to \infty} \int_A {\mathcal{F}^{\nu_n}_{\gamma}}(\underline{\eta}_n(t), \underline{\theta}_n(t)) \, dt
\\[2ex]
\mbox{for any bounded open set $ A \subset \hspace{-0.25ex} \subset (0, \infty) $.}
\end{array}
\end{equation}
As an application of \cite[Proposition 1.80]{MR1857292} for (\ref{kenV-36_1})--(\ref{kenV-38}), we have
\begin{equation*}
{\mathcal{F}^{\nu_n}_{\gamma}}(\underline{\eta}_n, \underline{\theta}_n) \to {\mathcal{F}_{\gamma}}(\eta, \theta) \mbox{ in $ \mathscr{M}_{\rm loc}((0, \infty)) $, as $ n \to \infty $.}
\end{equation*}

Thus, by (\ref{kenV-08}), we observe that
\begin{equation}\label{kenV-40}
{\mathcal{F}_{\gamma}}(\eta(t), \theta(t)) = {\mathcal{J}_\gamma}(t), \mbox{ \ a.e. $ t \in (0, \infty) $.}
\end{equation}
On the other hand, having (\ref{kenV-08}) in mind and letting $ n \to \infty $ in (\ref{kenV-100}), it follows that
\begin{equation}\label{kenV-41}
\begin{array}{c}
\ds \frac{1}{2} \int_s^t |\eta_t(\tau)|_{L^2(\Omega)}^2 \, d \tau +\int_s^t |{\textstyle \sqrt{\alpha_0(\eta(\tau))}} \theta_t(\tau)|_{L^2(\Omega)}^2 \, d \tau + {\mathcal{J}_\gamma}(t) \leq {\mathcal{J}_\gamma}(s),
\\[2ex]
\mbox{for a.e. $ 0 < s < t < \infty $.}
\end{array}
\end{equation}
{
From \eqref{kenV-06} and (\hyperlink{2-a}{$\sharp$2-a}), we conclude that
}
\begin{center}
$ {\mathcal{F}_{\gamma}}(\eta, \theta) \in L_{\rm loc}^1([0, \infty)) $ $\cap L_{\rm loc}^\infty((0, \infty)) $,
\end{center}
which, together with (\ref{kenV-40}), (\ref{kenV-41}) leads to (\hyperlink{2-c}{$\sharp$2-c}).

{
    We finish the proof of Main Theorem 1 by addressing (\hyperlink{S5}{S5}). We take the initial data
\begin{equation*}
[\eta_{0}, \theta_{0}] \in D_{0} \cap (H^{1}(\Omega) \times BV(\Omega)).
\end{equation*}
Then, applying (\hyperlink{Fact7}{Fact\,7}) to the case $ \{ {\beta}_n \}_{n = 1}^\infty = \{ {\beta} \} = \{ \alpha(\eta_0) \}$, we can find a sequence $ \{{\theta}_{0}^{n} \}_{n = 1}^\infty $  such that
\begin{equation}\label{S5-1}
\left\{ \begin{array}{l}
[\eta_{0}, {\theta}_{0}^{n}] \in D_0^\mathrm{rx} \mbox{ \ for $ n = 1, 2, 3, \dots $,}
\\[1ex]
{\theta}_{0}^{n} \to \theta_0 \mbox{ in $ L^2(\Omega) $ \ as $ n \to \infty $.
}
\\[1ex] \Phi_{\gamma}^{\nu_n}(\alpha(\eta_0); {\theta}_{0}^{n}) \to \Phi_{\gamma}(\alpha(\eta_0); \theta_0)  \mbox{\rm \  as $ n \to \infty $.}
\end{array} \right.
\end{equation}
Letting
\begin{equation}\label{S5-2}
F_{\ast} := \sup_{n \in \mathbb{N}} \mathcal{F}_{\gamma}^{\nu_{n}}(\eta_{0}, {\theta}_{0}^{n}),
\end{equation}
from \eqref{ene-inq} we see that
\begin{equation}\label{S5-3}
\begin{array}{l}
\ds \frac{1}{2} \int_{0}^{T} |(\widehat{\eta}_{n})_{t}(t)|_{L^{2}(\Omega)}^{2} dt + \int_{0}^{T} |\sqrt{\alpha_{0}(\overline{\eta}_{n}(t))}(\widehat{\theta}_{n})_{t}(t)|_{L^{2}(\Omega)}^{2} dt \vspace{3mm}\\
\ds = \sum_{i \in \mathbb{Z},\ ih \in [0,T]} \left( \frac{1}{2h} |\eta_{h_{n},i}^{\nu_{n}} - \eta_{h_{n},i-1}^{\nu_{n}}|_{L^{2}(\Omega)}^{2} + \frac{1}{h} |\sqrt{\alpha_{0}(\eta_{h_{n},i}^{\nu_{n}})}(\theta_{h_{n},i}^{\nu_{n}} - \theta_{h_{n},i-1}^{\nu_{n}}) |_{L^{2}(\Omega)}^{2} \right) \le F_{\ast}, \vspace{3mm}\\
\ds \sup_{t \in [0,T]} |\mathcal{F}_{\gamma}^{\nu_{n}}(\overline{\eta}_{n}, \overline{\theta}_{n})| + \sup_{t \in [0,T]} |\mathcal{F}_{\gamma}^{\nu_{n}}(\underline{\eta}_{n}, \underline{\theta}_{n})| \le 2 \sup_{i \in \mathbb{Z},\ ih \in [0,T]} |\mathcal{F}_{\gamma}^{\nu_{n}}(\eta_{h_{n},i}^{\nu_{n}}, \theta_{h_{n},i}^{\nu_{n}})| \le 2F_{\ast},
\end{array}
\end{equation}
for all $T>0$. Taking into account (\ref{app-rist}), (\ref{kenV-100}), and (\ref{S5-1})--(\ref{S5-3}), we have the following properties:
\begin{description}
\hypertarget{3-a}{}
\item[\textmd{($\sharp$3-a)}]$ \{ [\overline{\eta}_n, \overline{\theta}_n] \}_{n = 1}^\infty $ and $ \{ [\underline{\eta}_n, \underline{\theta}_n] \}_{n = 1}^\infty $ are bounded in {$ L_{\rm loc}^\infty([0, \infty); L^2(\Omega))^2 $}, and $ \{ [\widehat{\eta}_n, \widehat{\theta}_n] \}_{n = 1}^\infty $ is bounded in {$ C_{\rm loc}([0, \infty); L^2(\Omega))^2 \cap W^{1,2}_\mathrm{loc}([0,\infty); L^2(\Omega))^2 $};
\vspace{0.5ex}
\hypertarget{3-b}{}
\item[\textmd{($\sharp$3-b)}]the function
$t \in {(0, \infty)}  \mapsto {\mathcal{F}^{\nu_n}_{\gamma}(\overline{\eta}_n(t), \overline{\theta}_n(t))} \in \R $
is nonincreasing on {$ [0, \infty) $}, and hence, the sequences $ \{ \mathcal{F}^{\nu_n}_{\gamma}(\overline{\eta}_n, \overline{\theta}_n) \}_{n = 1}^\infty $ and $ \{ \mathcal{F}^{\nu_n}_{\gamma}(\underline{\eta}_n, \underline{\theta}_n) \}_{n = 1}^\infty $ are bounded in {$  L_{\rm loc}^1([0, \infty)) \cap  BV_{\rm loc}([0, \infty)) $}.
\end{description}
By the compactness theories as in \cite{MR1857292} and \cite[Corollary 4]{MR0916688}, there exists a pair of functions $ [\eta, \theta] \in \{  W^{1,2}_\mathrm{loc} ([0, \infty); L^2(\Omega))^2 \} $  together with subsequences (not relabeled) of
$ \{ [\overline{\eta}_n, \overline{\theta}_n] \}_{n = 1}^\infty $, $ \{ [\underline{\eta}_n, \underline{\theta}_n] \}_{n = 1}^\infty $ and
$ \{ [\widehat{\eta}_n, \widehat{\theta}_n] \}_{n = 1}^\infty $, such that
\begin{equation}\label{S5-4}
\left\{ \hspace{-2ex} \parbox{14cm}{
\vspace{-2ex}
\begin{itemize}
\item $ \eta \in  { L_{\rm loc}^\infty([0, \infty); H^1(\Omega))} $, and $ 0 \leq \eta \leq 1 $ a.e. in $ Q $,
\vspace{-1ex}
\item $|D \theta(\cdot)|(\Omega) \in L_{\rm loc}^\infty([0, \infty)) $, and $ |\theta| \leq {|\gamma|_{L^\infty(\Gamma)}} $ a.e. in $ Q $.
\vspace{-1ex}
\item $ [\eta(0), \theta(0)] = [\eta_0, \theta_0] $ in $ L^2(\Omega) $;
\vspace{-2ex}
\end{itemize}
} \right.
\end{equation}
{
\begin{equation}\label{S5-5}
\left\{ \hspace{-2ex} \parbox{12cm}{
\vspace{-2ex}
\begin{itemize}
\item $ \widehat{\theta}_n \to \theta $ in $ {C_{\rm loc}([0, \infty); L^2(\Omega))} $, weakly in $ W^{1,2}_{\rm loc}([0, \infty), L^2(\Omega)) $ and weakly-$*$ in $ L^\infty(Q) $,
\vspace{-1ex}
\item $ \widehat{\theta}_n(t) \to \theta(t) $ in $ L^2(\Omega) $, and weakly-$ * $ in $ BV(\Omega) $, for any $ t > 0 $,
\vspace{-3ex}
\item $ \widehat{\eta}_n \to \eta $ in $ C_{\rm loc}([0, \infty); L^2(\Omega)) $, weakly-$ * $ in $ {L_{\rm loc}^\infty([0, \infty); H^1(\Omega))} $, and weakly-$*$ in $ L^\infty(Q) $,
\vspace{-1ex}
\item $ \widehat{\eta}_n(t) \to \eta(t) $ in $ L^2(\Omega) $, and weakly in $ H^1(\Omega)$, for any $ t > 0 $,
\vspace{-2ex}\end{itemize}
} \right. \mbox{ \ \ \ \ as $ n \to \infty $;}
\end{equation}}
Therefore,  we also have the following convergences as $ n \to \infty $:
\begin{equation}\label{S5-6}
\left\{ \hspace{-2ex} \parbox{10cm}{
\vspace{-2ex}
\begin{itemize}
\item $ \overline{\eta}_n \to \eta $ and $ \underline{\eta}_n \to \eta $ in $ L_{\rm loc}^\infty([0, \infty); L^2(\Omega)) $, weakly-$ * $ in $ L_{\rm loc}^\infty([0, \infty); H^1(\Omega)) $, and weakly-$*$ in $ L^\infty(Q) $,
\item $ \overline{\eta}_n(t) \to \eta(t) $, $ \underline{\eta}_n(t) \to \eta(t) $ in $ L^2(\Omega) $, weakly in $ H^1(\Omega) $, for any $ t \ge 0 $,
\vspace{-2ex}
\end{itemize}
} \right. \mbox{ \ \ \ \ as $ n \to \infty $;}
\end{equation}
\begin{equation}\label{S5-7}
\left\{ \hspace{-2ex} \parbox{10cm}{
\vspace{-2ex}
\begin{itemize}
\item $ \overline{\theta}_n \to \theta $ and $ \underline{\theta}_n \to \theta $ in {$ L_{\rm loc}^\infty((0, \infty); L^2(\Omega)) $}, and weakly-$*$ in $ L^\infty(Q) $,
\item $ \overline{\theta}_n(t) \to \theta(t)  $, $ \underline{\theta}_n(t) \to \theta(t) $ in $ L^2(\Omega) $, and weakly-$ * $ in $ BV(\Omega) $, for any $ t \ge 0 $,
\vspace{-2ex}
\end{itemize}
} \right. \mbox{ \ \ \ \ as $ n \to \infty $.}
\end{equation}
Moreover, under subsequences if necessary, we can find a function $ \mathcal{J}_{\gamma} \in BV_{\rm loc}([0, \infty)) $ such that:
\begin{equation}\label{S5-8}
\begin{array}{c}
\ds {\mathcal{F}^{\nu_n}_{\gamma}}(\underline{\eta}_n, \underline{\theta}_n) \to {\mathcal{J}}_\gamma
\mbox{ \ weakly-$*$ in $ BV_{\rm loc}((0, \infty)) $, weakly-$*$ in $ L_{\rm loc}^\infty((0, \infty)) $,}
\\[1ex]
\mbox{and a.e. in $ (0, \infty) $, \ as $ n \to \infty $.}
\end{array}
\end{equation}
Using (\ref{S5-4})--(\ref{S5-8}), we can get the energy inequality in (\hyperlink{S5}{S5}). \hfill $ \Box $

}

\subsection{Proof of Main Theorem 2}
\ \ \vspace{-3ex}

{
\begin{rem}\label{Rem.extra01}
\begin{em}
Observe that, due to the nonincreasing property of {$ \mathcal{J}_\gamma $},  the condition ``for a.e. $ 0 < s < t < \infty $'' in (\ref{kenV-41}) can be rephrased as ``for all $ 0 < s \leq t < \infty $''. Moreover, taking into account Remark \ref{Rem.G-conv01} and (\hyperlink{(S0)}{S0}) in Definition \ref{Def.Sol}, one can deduce from (\ref{kenV-41}) that
\begin{equation}\label{ene-inq1}
\begin{array}{c}
\ds \frac{1}{2} \int_s^t |\eta_t(\tau)|_{L^2(\Omega)}^2 \, d \tau +\int_s^t |{\textstyle \sqrt{\alpha_0(\eta(\tau))}} \theta_t(\tau)|_{L^2(\Omega)}^2 \, d \tau +{\mathcal{F}_{\gamma}}(\eta(t), \theta(t)) \leq \mathcal{J}_\gamma(s),
\\[2ex]
\mbox{for all $ 0 < s \leq t < \infty $.}
\end{array}
\end{equation}
\end{em}
\end{rem}
}
\bigskip

\noindent
\textbf{Proof of Main Theorem 2.}
Let $ [\eta, \theta] \in C([0, \infty); L^2(\Omega))^2 \cap W_{\rm loc}^{1, 2}((0, \infty); L^2(\Omega))^2 $ be a solution to (\hyperlink{S}{S}) with the property (S4). Then, from (\ref{ene-inq1}) and Remark \ref{Rem.G-conv01}, it is observed that
\begin{equation}\label{kenVI-01}
\left\{ \hspace{-2.5ex} \parbox{14cm}{
\vspace{-2ex}
\begin{itemize}
\item $ [\eta_t, \theta_t] \in L^2(1, \infty; L^2(\Omega))^2 $, and hence $ \eta_t(\cdot +s) \to 0 $ and $ \theta_t(\cdot +s) \to 0 $ in $ L^2(0, 1; L^2(\Omega)) $, as $ s \to \infty $,
\item $ \{ [\eta(t), \theta(t)] \, | \, t \geq 1 \} \subset F_1 := \left\{ \begin{array}{l|l}
[\tilde{\eta}, \tilde{\theta}] \in D_0 & {\mathcal{F}_{\gamma}}(\tilde{\eta}, \tilde{\theta}) \leq {\mathcal{J}_\gamma}(1)
\end{array} \right\} $,
\item $ F_1 $ is compact in $ L^2(\Omega)^2 $, because \eqref{freeEgy}, \eqref{initial-2} and Remark \ref{Rem.G-conv01} enable us to say that $ F_1 $ is closed in $ L^2(\Omega)^2 $, and bounded in $ (H^1(\Omega) \cap L^\infty(\Omega)) \times (BV(\Omega) \cap L^\infty(\Omega)) $.
\vspace{-2ex}
\end{itemize}
} \right.
\end{equation}
Therefore, we can find a pair $ [\eta_\infty, \theta_\infty] \in L^2(\Omega)^2 $ and a sequence of times $1 \leq t_1 < t_2 < t_3 < \cdots < t_n  \uparrow \infty$ as $ n \to \infty$
such that
\begin{equation}\label{kenVI-02}
[\eta(t_n), \theta(t_n)] \to [\eta_\infty, \theta_\infty] \mbox{ in $ L^2(\Omega)^2 $, as $ n \to \infty $.}
\end{equation}
This implies that $ \omega(\eta, \theta) \ne \emptyset $. Also, the compactness of $ \omega(\eta, \theta) $ is obtained from the compactness of $ F_1 $ and the fact that
\begin{equation*}
\omega(\eta, \theta) = \bigcap_{s \geq 0} \overline{\{ [\eta(t), \theta(t)] \, | \, t \geq s \}} \subset \overline{\{ [\eta(t), \theta(t)] \, | \, t \geq 1 \}} \subset F_1.
\end{equation*}
Thus, {it follows that $\omega_{\infty}(\eta,\theta)$ is nonempty and compact in $L^{2}(\Omega)^{2}$.}

\medskip
Next, we verify  (\hyperlink{S0infty}{S0})$_{\infty}$--(\hyperlink{S2infty}{S2})$_{\infty}$. We take any $ [\eta_\infty, \theta_\infty] \in \omega(\eta, \theta) $. Then, there exists a sequence of times $1 \leq t_1 < t_2 < t_3 < \cdots < t_n  \uparrow \infty$ as $ n \to \infty$
such that
(\ref{kenVI-02}) holds. Hence,  item (\hyperlink{S0infty}{S0})$_{\infty}$ is a straightforward consequence of (\hyperlink{S0infty}{S0})$_{\infty}$, (\ref{kenVI-01}) and (\ref{kenVI-02}).

\medskip In the meantime, it follows from (\ref{kenVI-01}) that
\begin{equation}\label{kenVI-03}
\left\{ \hspace{-2.5ex} \parbox{12cm}{
\vspace{-1ex}
\begin{itemize}
\item $ \{ \eta_n \}_{n = 1}^\infty := \{ \eta({}\cdot +\,t_n) \}_{n = 1}^\infty $ is bounded in $ W^{1, 2}(0, 1; L^2(\Omega)) \cap L^\infty(0, 1; H^1(\Omega)) $;
\vspace{-1ex}
\item $ \{ \theta_n \}_{n = 1}^\infty := \{ \theta({}\cdot + \, t_n) \}_{n = 1}^\infty $ is bounded in $ W^{1, 2}(0, 1; L^2(\Omega)) $, and $ \{ |D \theta_n({}\cdot{})|(\Omega) \}_{n = 1}^\infty $ is bounded in $ L^\infty(0, 1) $;
\vspace{-1ex}
\item $ \{ [\eta_n(t), \theta_n(t)] \, | \, t \in [0, 1], ~ n \in \N \} \subset D_0 $.
\vspace{-1ex}
\end{itemize}
} \right.
\end{equation}
Due to (\ref{kenVI-01}) and (\ref{kenVI-03}) { and the compactness theories as in \cite{MR1857292} and \cite[Corollary 4]{MR0916688}} we deduce that
\begin{equation}\label{kenVI-06}
\parbox{12cm}{
\begin{tabular}{rl}
$ (\eta_n)_t \to 0 $ & \hspace{-1ex} and \ $ (\theta_n)_t \to 0 $ \ in $ L^2(0, 1; L^2(\Omega)) $, \ as $ n \to \infty $,
\end{tabular}
}
\end{equation}
\begin{equation}\label{kenVI-04}
\parbox{12cm}{
\begin{tabular}{rl}
$ \eta_n \to \eta_\infty $ & in $ W^{1, 2}(0, 1; L^2(\Omega)) $, weakly-$*$ in $ L^\infty(0, 1; H^1(\Omega)) $,
\\[1ex]
& and weakly-$*$ in $ L^\infty((0, 1) \times \Omega) $, \ as $ n \to \infty $;
\end{tabular}
}
\end{equation}
\begin{equation}\label{kenVI-05}
\parbox{12cm}{
\begin{tabular}{rl}
$ \theta_n \to \theta_\infty $ & in $ W^{1, 2}(0, 1; L^2(\Omega)) $, \ as $ n \to \infty $;
\end{tabular}
}
\end{equation}
\begin{equation}\label{kenVI-07}
\parbox{12cm}{
\begin{tabular}{rl}
$ \theta_n(t) \to \theta_\infty $ & weakly-$ * $ in $ BV(\Omega) $, for any $ t \in (0, 1) $, \ as $ n \to \infty $,
\end{tabular}
}
\end{equation}
{
by taking subsequences (not relabeled) if necessary.}
On the other hand, owing to (\hyperlink{(S1)}{S1}), (\hyperlink{S2}{S2}) in Definition \ref{Def.Sol}, the sequence $ \{ [\eta_n, \theta_n] \}_{n = 1}^\infty $ satisfies
\begin{equation}\label{kenVI-08}
\begin{array}{c}
\ds \int_0^1 \bigl( (\eta_n)_t(t) +g(\eta_n(t)), w \bigr)_{L^2(\Omega)} \, dt +\int_0^1 (\nabla \eta_n(t), \nabla w)_{L^2(\Omega)^N} \, dt
\\[1.5ex]
\ds + {\int_0^1 \int_{\overline{\Omega}}  d \bigl[ w \alpha'(\eta_{n}(t)) |D \theta_{n}(t)| \bigr]_{\gamma} \, dt} = 0,
\\[2ex]
\mbox{ \ for any $ w \in H^1(\Omega) \cap L^\infty(\Omega) $ and any $ n \in \N $,}
\end{array}
\end{equation}
and
\begin{equation}\label{kenVI-09}
\begin{array}{c}
\ds \int_0^1 \bigl( \alpha_0(\eta_n(t)) (\theta_n)_t(t), {\theta_n(t)-v} \bigr)_{L^2(\Omega)} \, dt +\int_0^1 {\Phi_\gamma(\alpha(\eta_n(t)); \theta_n(t))} \, dt
\\[2ex]
\leq {\Phi_\gamma(\alpha(\eta_n(t)); v)}, \mbox{ \ for any $ n \in \N $ and $v\in BV(\Omega)\cap L^2(\Omega)$.}
\end{array}
\end{equation}

Taking into account (\ref{kenVI-01}), {(\ref{kenVI-06})--(\ref{kenVI-07})}, (\ref{kenVI-09}), and Lemma \ref{Lem.LB}, we see that
{
\begin{eqnarray}
0 & \leq & \int_0^1 \int_{\overline{\Omega}} d \bigl[ \alpha(\eta(t)) |D \theta(t)| \bigr]_{\gamma} dt \leq \liminf_{n \to \infty} \int_0^1 \int_{\overline{\Omega}} d \bigl[ \alpha(\eta_n(t)) |D \theta_n(t)| \bigr]_{\gamma} \, dt
\nonumber
\\
& \leq & \limsup_{n \to \infty} \int_0^1 \Phi_{\gamma}(\alpha(\eta_n(t)), \theta_n(t)) \, dt
\label{kenVI-10}
\\
& \leq & \lim_{n \to \infty}\int_0^1 \Phi_{\gamma}(\alpha(\eta_n(t)), v) \, dt - \lim_{n \to \infty} \int_0^1 \bigl( \alpha_0(\eta_n(t)) (\theta_n)_t(t), \theta_n(t) - v \bigr)_{L^2(\Omega)} \, dt
\nonumber
\\
& = & \Phi_{\gamma}(\alpha(\eta_\infty(t)), v).
\nonumber
\end{eqnarray}
}
By (\hyperlink{(A2)}{A2}), the above inequality implies the item (\hyperlink{S2infty}{S2})$_{\infty}$.
\medskip

Finally, with (\ref{kenVI-06})--(\ref{kenVI-07}) and (\ref{kenVI-10}) in mind, given any $ w \in H^1(\Omega) \cap L^\infty(\Omega) $, we apply Lemma \ref{Lem.core} with $ I = (0, 1) $, $ \beta = \alpha(\eta_\infty) $, $ \{ \beta_n \}_{n = 1}^\infty = \{ \alpha(\eta_n) \}_{n = 1}^\infty $, $ v =\theta_\infty $, $ \{ v_n \}_{n = 1}^\infty =  \{ \theta_n \}_{n = 1}^\infty $, $ \varrho = w \alpha'(\eta_\infty) $, $ \{ \varrho_n \}_{n = 1}^\infty = \{ w \alpha'(\eta_n) \}_{n = 1}^\infty $ and {$\{\gamma_{n}\}_{n=1}^{\infty} = \{\gamma\}$}. Then, we deduce that
{
\begin{equation}\label{kenVI-11}
\begin{array}{c}
\ds \int_0^1 \int_{\overline{\Omega}} d \bigl[ w \alpha'(\eta_n(t)) |D \theta_n(t)| \bigr]_{\gamma} \, dt \to \int_{\overline{\Omega}} d \bigl[ w \alpha'(\eta_\infty) |D \theta_\infty| \bigr]_{\gamma},
\\[2ex]
\mbox{as $ n \to \infty $, \ for any $ w \in H^1(\Omega) \cap L^\infty(\Omega) $.}
\end{array}
\end{equation}
}
With (\ref{kenVI-06})--(\ref{kenVI-07}) and (\ref{kenVI-11}) in mind, letting $ n \to \infty $ in (\ref{kenVI-08}) yields
\begin{equation*}
\begin{array}{c}
\ds (\nabla \eta_\infty, \nabla w)_{L^2(\Omega)^N} + (g(\eta_\infty), w)_{L^2(\Omega)} {+ \int_{\overline{\Omega}} d \bigl[ w \alpha'(\eta_\infty) |D \theta_\infty| \bigr]_{\gamma}} = 0, \vspace{3mm}\\
\ds \mbox{ \ for any $ w \in H^1(\Omega) \cap L^\infty(\Omega) $.}
\end{array}
\end{equation*}
{Thus, we conclude (\hyperlink{S1infty}{S1})$_\infty$.
\hfill $ \Box $}

\section{Asymptotic behavior}

In this Section we study some structural observations of the steady state solutions in the precise setting of \eqref{concrete-setting}. The qualitative properties of the steady states differ in the case of a one dimensional domain or in the two dimensional case.
\subsection{1-D case}
First of all, we consider the case that $\Omega=(0,1)$. We will show below that orientations in steady state solutions are SBV functions. We assume without loss of generality  that $0=\gamma(0)<\gamma(1)$. The Euler-Lagrange equations derived from (\hyperlink{S1infty}{S1})$_\infty$ and (\hyperlink{S2infty}{S2})$_{\infty}$ are the following ones (see \cite{MR2139257} for the equation derived from (\hyperlink{S2}{S2})):
\begin{equation}
  \label{asymp-1-d}
  \left\{\begin{array}{cc} \eta''=\eta-1+\eta|\theta'| & {\rm in \ } (0,1) \\ \mp\eta'(x)+\eta(x)|\theta(x)-\gamma(x)|=0 & {\rm at \ } \{0,1\} \\ (\alpha(\eta) w)'(x)=0 & {\rm in \ } (0,1) \\ w(x)\geq 0 & {\rm at \ } \{0,1\}\end{array}\right.,
\end{equation}
with $\|w\|_\infty\leq 1$ such that \begin{equation}\label{eq-measures-1D}\alpha(\eta)|\theta'|=-\theta(\alpha(\eta)w)'+(\alpha(\eta)\theta w)' \quad\mbox{ as measures}, \end{equation}

\begin{thm}
 For any $\gamma(1)>0$ and $\{a_k,b_k\}_k\subset [0,1]$ with $a_k< b_k<a_{k+1}$ there exists $0<d<1$ such that $$\eta(x)=\sum_{k\in\N} \left(1+(d-1)\frac{\cosh\left(x-\frac{a_k+b_k}{2}\right)}{\cosh(b_k-a_k)}\right)\chi_{\{(a_k,b_k)\}}+d\chi_{[0,1]\setminus\bigcup_{k\in\N} (a_k,b_k)},$$

and $\theta$  recovered from the Dirichlet constraints via integration by parts from
$$\theta'=\frac{1-d}{d}\left(1+\sum_{k\in\N} \tanh\left(\frac{b_k-a_k}{2}\right)\delta_{\{a_k,b_k\}}\right)$$ is a solution to  \eqref{asymp-1-d}. Moreover, these are the only solutions to \eqref{asymp-1-d}.
\end{thm}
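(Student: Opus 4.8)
The plan is to establish the two assertions — that the displayed pair solves \eqref{asymp-1-d}--\eqref{eq-measures-1D}, and that every solution has this form with $d$ uniquely determined — together, the common engine being the integration of the $\theta$-block. First I would note that $(\alpha(\eta)w)'=0$ forces $\alpha(\eta)w\equiv c$ for a constant $c$; since $\|w\|_\infty\le1$ and $\eta\in H^1((0,1))\subset C([0,1])$ by Main Theorem 2, substituting into \eqref{eq-measures-1D} turns it into $\alpha(\eta)|D\theta|=c\,D\theta$ as measures. Reading this off: either $D\theta\equiv0$ (which, together with the Neumann conditions, forces $\eta\equiv1$ and hence $\gamma(1)=0$, excluded), or $\theta$ is nondecreasing with $c>0$; and, decomposing $D\theta$ into absolutely continuous, jump and Cantor parts and using that $r\mapsto\alpha(r)=\tfrac{r^2}{2}+\delta_0$ is strictly increasing on $[0,\infty)$, one gets $\eta\equiv d:=\alpha^{-1}(c)$ on $\operatorname{supp}D\theta$. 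Then $\alpha(d)=c\le\alpha(\min\eta)$ gives $d=\min_{[0,1]}\eta\in[0,1]$, and a short check of the equations (e.g. $d=0$ would force the absolutely continuous density of $D\theta$ below to be $\tfrac{1-d}{d}=+\infty$) excludes the endpoints, so $d\in(0,1)$.

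\textbf{Skeleton of a solution.}
Next I would extract the skeleton. Let $U\subset(0,1)$ be the open set on which $\theta$ is locally constant, so $|D\theta|=0$ on $U$ and $\eta$ solves the linear ODE $\eta''=\eta-1$ on each component of $U$. The interior endpoints of such a component lie on $\partial\operatorname{supp}D\theta$, hence $\eta=d$ there; solving $\eta''=\eta-1$ with $\eta(a)=\eta(b)=d$ gives, by reflection symmetry about the midpoint, the $\cosh$ profile $\eta=1+(d-1)\cosh(x-\tfrac{a+b}{2})/\cosh(\tfrac{b-a}{2})$ on $(a,b)$; when an endpoint is $0$ or $1$ the same shape drops out of the Neumann condition $\mp\eta'+\eta|\theta-\gamma|=0$. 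On the complement of $U$ (the ``mushy'' region) $\eta\equiv d$, so $\eta''=(\eta-1)+\eta|D\theta|$ forces the absolutely continuous density of $D\theta$ there to be $\tfrac{1-d}{d}$, and matching the jump of $\eta'$ across each interior endpoint $a_k$ or $b_k$ — which the $\cosh$ profile gives as $(1-d)\tanh\tfrac{b_k-a_k}{2}$ — forces $D\theta$ to carry an atom of mass $\tfrac{1-d}{d}\tanh\tfrac{b_k-a_k}{2}$ there. The same relation between $\eta''$ and the singular part of $|D\theta|$ (on $\{\eta=d\}$ one has $\eta''=0$, but a Cantor part of $D\theta$ would produce the nonzero singular contribution $d\,D^c\theta$) shows $D^c\theta=0$, i.e. $\theta\in SBV((0,1))$. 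Thus the components of $U$ form a family $\{(a_k,b_k)\}$ as in the statement, and $\eta$, $D\theta$ are determined up to the scalar $d$.

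\textbf{Fixing $d$ and the converse.}
To finish the classification I would compute $d$ from the boundary data. Integrating $D\theta$ over $(0,1)$ with the formulas just obtained yields $\theta(1^-)-\theta(0^+)=\tfrac{1-d}{d}\,G$, where $G:=1-\sum_k(b_k-a_k)+2\sum_k\tanh\tfrac{b_k-a_k}{2}$; meanwhile the sign constraints $w(0),w(1)\ge0$ together with $\mp\eta'+\eta|\theta-\gamma|=0$ pin the boundary traces — $\theta=\gamma$ at an endpoint when the mushy region reaches it, and $\theta$ differing from $\gamma$ there by $\tfrac{1-d}{d}\tanh$ of the half-length of the grain touching it otherwise — and in every case the global balance collapses to $\gamma(1)=\tfrac{1-d}{d}\,G$. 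Since the $(a_k,b_k)$ are disjoint subintervals of $[0,1]$, one has $0<G\le1$ (with $G=1$ only in the absence of grains), so $d=G/(G+\gamma(1))\in(0,1)$ is the unique admissible value, and the parameters $\{a_k,b_k\}$ and $\gamma(1)$ are free. For the converse I would simply insert this $d$ into the ansatz and verify \eqref{asymp-1-d}--\eqref{eq-measures-1D} directly: the bulk ODE and the atoms of $\eta''$ were engineered above, the Neumann conditions hold by the same trace computation, $w:=\alpha(d)/\alpha(\eta)$ satisfies $0\le w\le1$ and $(\alpha(\eta)w)'=0$, and \eqref{eq-measures-1D} reduces to the identity $\alpha(\eta)|D\theta|=\alpha(d)\,D\theta$, valid because $D\theta\ge0$ is supported where $\eta=d$.

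\textbf{Main obstacle.}
The hard part will be the boundary bookkeeping of the previous step: the Dirichlet condition $\theta=\gamma$ on $\partial\Omega$ holds only in the weak subdifferential sense, so $\theta$ may jump at $x=0$ and $x=1$, and one must extract from the two scalar boundary relations ($w\ge0$ at the endpoint and $\mp\eta'+\eta|\theta-\gamma|=0$) precisely which trace occurs, distinguishing the ``a grain touches the endpoint'' and ``the mushy region touches the endpoint'' subcases, and verify that all of them collapse to the single scalar equation for $d$. A secondary, more technical nuisance is justifying the measure-level manipulations — the decomposition of $D\theta$, the restriction of $\eta''=(\eta-1)+\eta|D\theta|$ to the level set $\{\eta=d\}$, and the $\eta'$-jump matching — within the regularity class $\eta\in H^1$, $\theta\in BV$ provided by Main Theorem 2.
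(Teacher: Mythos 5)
Your plan tracks the paper's proof quite closely in its first two blocks — the reduction via $(\alpha(\eta)w)'=0$, the observation that $w=1$ $|D\theta|$-a.e.\ forces $\eta\equiv d=\alpha^{-1}(c)=\min\eta$ on $\operatorname{supp}D\theta$, the identification of the components $(a_k,b_k)$ of the open set where $\theta$ is locally constant, and the $\cosh$ profile on each. (Incidentally your formula with $\cosh\bigl(\tfrac{b_k-a_k}{2}\bigr)$ in the denominator is the right one; solving $\eta''=\eta-1$ with $\eta(a_k)=\eta(b_k)=d$ gives that normalization, so the published statement carries a typo.) Where you and the paper diverge is in establishing $D^c\theta=0$, and that is where your sketch has a genuine gap.

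You write ``on $\{\eta=d\}$ one has $\eta''=0$, but a Cantor part of $D\theta$ would produce the nonzero singular contribution $d\,D^c\theta$.'' Taken at face value this is false: $\eta''=D\eta'$ is a measure, $\{\eta=d\}$ contains every $a_k,b_k$, and $D\eta'$ carries an atom at each of them, so $D\eta'\lfloor\{\eta=d\}\neq0$. What your argument actually needs is $D^c\eta'\lfloor\{\eta=d\}=0$, and that is precisely the nontrivial content here — asserting it is circular with the conclusion you want. The accumulation points of the $a_k,b_k$ form, a priori, a Cantor-like subset of $\{\eta=d\}$ of Lebesgue measure zero, and nothing in the identity $D\eta'=(\eta-1)\,dx+\eta|D\theta|$ alone rules out a diffuse singular piece supported there. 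The paper closes this by a local density computation: if $x_0$ is an accumulation point of the grains, then
\begin{equation*}
|D\eta'|\bigl((x_0,x_0+\rho)\bigr)=4(1-d)\sum_{\lambda\in\Lambda_\rho}\tanh\!\Bigl(\tfrac{b_\lambda-a_\lambda}{2}\Bigr)\leq 2(1-d)\,\rho,
\end{equation*}
so the upper density of $|D\eta'|$ at $x_0$ is finite and $x_0$ cannot carry a singular (non-atomic) part. Your sketch can be repaired without this computation if you instead invoke the general 1-D fact that the Cantor part of a $BV$ function does not charge level sets of its precise representative, i.e.\ $|D^c u|(\{\bar u=c\})=0$ — applied to $u=\eta'$, $c=0$, noting $\overline{\eta'}=0$ on $\{\eta=d\}\setminus\{a_k,b_k\}$ and that $D^c\eta'$ has no atoms. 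Either route works, but one of them has to appear; as written the step is an assertion of what needs to be proved.

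A secondary remark: your explicit determination $d=G/(G+\gamma(1))$ with $G=1-\sum_k(b_k-a_k)+2\sum_k\tanh\bigl(\tfrac{b_k-a_k}{2}\bigr)$ goes beyond the paper, which is content with existence of $d\in(0,1)$. You are right that the delicate point is the boundary bookkeeping under the weak Dirichlet condition; that is indeed where the remaining effort lies, and you have correctly identified it.
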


\begin{proof}

We begin by noting that \eqref{eq-measures-1D}  implies that $|w|=1, |\theta'|$-a.e in $(0,1)$, which, in view of the boundary conditions and third equation imply that  $w=1$ $|\theta'|$-a.e in $(0,1)$ ). Note that, since $\eta\in H^1(0,1)$ and $\theta\in BV(0,1)$, \eqref{asymp-1-d} implies that $\eta'\in BV(\Omega)$ and that $w\in H^1(0,1)$. Then, $w$ is continuous in $(0,1)$.

Thus, $\{w<1\}$ is an open set. Therefore, there exists a sequence of disjoint intervals such that
    $$
    \{w<1\}=\bigcup_{k=1}^{+\infty} ]a_k,b_k[.
    $$
It follows that $\theta=C_k$ in $]a_k,b_k[$. Then, the only solution to \eqref{asymp-1-d} is $$\eta_k(x)=1+(d-1)\frac{\cosh\left(x-\frac{a_k+b_k}{2}\right)}{\cosh(b_k-a_k)},$$ with $d=\min_{[0,1]} \eta$.   Observe that $(\eta_k')^+(a_k)>0$ while  $(\eta_k')^-(b_k)<0$.

Our next aim is to prove that  $(\theta')^c=0$ in $(0,1)$.

Suppose that $x_0$ is an approximate continuity point for $\theta$ (and therefore for $\eta'$). Then, since $\eta$ is differentiable a.e. and the weak derivative coincides with the classical one, and since $\eta(x_0)$ is a minimum of $\eta$, $(\eta')^+(x_0)=(\eta')^-(x_0)=0$. In particular, $a_k,b_k$ cannot be approximate continuity points. Suppose now that $x_0\in \{w=1\}\setminus\{a_k,b_k:  k\in\N\}$. Then, we have the following two alternatives:

\begin{itemize}
    \item[(a)] There exists $\rho_0>0$ such that $(x_0-\rho_0,x_0+\rho_0)\cap \{w<1\}=\emptyset$.
    \item[(b)] For all $\rho>0$, $\exists k_n\in\N$ such that $I_{k_n}\cap (x_0,x_0+\rho)\neq \emptyset$ with $I_{k_n}:=]a_{k_n},b_{k_n}[$.
\end{itemize}

    For possibility ({a}), we observe that $\eta=d$ in $(x_0-\rho_0,x_0+\rho_0)$, therefore, we get that $|\theta'|=\frac{1-d}{d}$ in $(x_0-\rho_0,x_0+\rho_0)$ and then $x_0$ cannot be a Cantor point of $\theta'$.

\medskip
    If ({b}) is true, then letting $\Lambda_\rho:=\{\lambda\in N: I_\lambda\cap (x_0,x_0+\rho)\neq\emptyset\}$ we obtain that $$\eta'(y)=\left\{\begin{array}
  {cc}\displaystyle \frac{(d-1)\sinh\left(y-\frac{a_\lambda+b_\lambda}{2}\right)}{\cosh\left(\frac{b_\lambda-a_\lambda}{2}\right)} & {\rm if \ } y\in I_\lambda \\ 0 & {\rm if \ } y\notin \bigcup_{\lambda\in\Lambda_\rho} I_\lambda \, .
\end{array}\right.$$
Therefore, its distributional derivative is $$D\eta'=\sum_{\lambda_\in \Lambda_\rho} \frac{(d-1)\cosh\left(\cdot-\frac{a_\lambda+b_\lambda}{2}\right)}{\cosh\left(\frac{b_\lambda-a_\lambda}{2}\right)}\chi_{I_\lambda}+\sum_{\lambda\in \Lambda_\rho}(1-d)\tanh\left(\frac{b_\lambda-a_\lambda}{2}\right)\delta_{\{a_\lambda,b_\lambda\}}.$$ Then, $$\frac{|D\eta'|(x_0,x_0+\rho)}{\rho}=\frac{4(1-d)}{\rho}\sum_{\lambda\in \Lambda_\rho}\tanh\left(\frac{b_\lambda-a_\lambda}{2}\right)\leq 2(1-d),$$ which implies that $x_0$ does not belong to the Cantor set of $D\eta'$ (equivalently, of $\theta'$).

Therefore, $(\theta')^{c}=0$ and we obtain the description of $\eta$, $\theta$ as stated.

\end{proof}

\subsection{2-D case: Radially symmetric solutions}
Here, we consider $N=2$, $\Omega=B(0,R)\setminus B(0,r_0)$ and $\theta=\theta(|x|)$, $\eta=\eta(|x|)$. Then, the system in radial coordinates reads as:
\begin{equation*}
    \left\{\begin{array}{cc} \displaystyle -\eta''(r)-\frac{\eta'(r)}{r}+\eta(r)-1+\eta(r)|\theta'(r)|=0 & {\rm in \ } ]r_0,R[ \\ \mp\eta'+\alpha'(\eta)|\theta-\gamma|=0 & {\rm on \ } \{r_0,R\}  \\ \displaystyle (\alpha(\eta(r))\omega(r))'+\frac{\alpha(\eta(r))\omega(r)}{r}=0 & {\rm in \ } ]r_0,R[ \end{array}\right.,
\end{equation*}
where $\|w\|\leq 1$ satisfies \eqref{eq-measures-1D}. We restrict our analysis to the case of
piecewise constant solutions for $\theta$ and give sufficient conditions for the solutions having one or two jumps to exist. The first example shows that precisely one jump at the interior boundary is always possible.

\begin{exmp}
  Suppose that $\theta=\gamma(R)>0=\gamma(0)$. Then, the solution for $\eta$ is given by  $$\eta(r)=1+A I_0(r)+B K_0(r),$$ with $I_0$ being the modified Bessel function of the first kind and $K_0$ the modified Bessel function of the second kind. Coupled with the boundary conditions, this yields $$\eta(r)=1-\frac{{\gamma(R)}(I_0(r)+K_0(r)T_1(R))}{K_1(r_0)(T_1(R)-T_1(r_0))+{\gamma(R)}(I_0(r)+K_0(r)T_1(R))}, $$where $T_j(r):=\frac{I_j(r)}{K_j(r)}$. In fact, it suffices to take $$w(r):=\frac{r_0 \alpha(\eta(r_0))}{r\alpha(\eta(r))}.$$
\end{exmp}

Next, we show that solutions with exactly one jump at the exterior boundary may exist, but extra conditions on the radius (and therefore on curvature) for this to happen are needed. This phenomenon is clearly explained by the fact that, formally, the evolution of the solutions is a combination of mean curvature motion for the jumps of rotation, coupled with total variation evolution for the values of the rotation (see \cite{kobayashi2000continuum}).
\begin{exmp}
  Suppose that $\theta=0=\gamma(0)<{\gamma(R)}$. Then, as before,  $$\eta(r)= 1+\frac{(d-1)K_0(r)(T_0(r)+T_1(r_0))}{K_0(R)(T_0(R)+T_1(r_0))} ,$$ where $d=\eta(R)$.  Note that, $d$ is easily obtained by the relation:
{
\begin{equation*}
     \frac{(d-1)K_1(R)}{K_0(R)}\cdot\frac{T_1(r_0)-T_1(R)}{T_0(R)+T_1(r_0)}=d \gamma(R).
 \end{equation*}
}
We observe that, defining $w(r):=\frac{R\alpha(d)}{r\alpha(\eta(r))}$ we will have to show that $|w|\leq 1$. For that, we point out that the function $\sigma\mapsto \sigma\alpha(\eta(\sigma))$ is increasing at $r_0$ and at $R$ (here we need $R{\gamma(R)}\geq \frac{1}{2}$. Moreover, $w$ does not have an interior maximum since
$$w'(r)= -\frac{R\alpha(d)}{r^2\alpha^2(\eta(r))}(\alpha+r\eta(r)\eta'(r))=0\leftrightarrow \eta(r)\eta'(r)=-\frac{\alpha(\eta(r))}{r}.$$Then,
at a critical point,
$$w''(r)=-\frac{R\alpha(d)}{r^3\alpha^2(\eta(r))\eta^2(r)\left(-\frac{\eta^4(r)}{2}+\delta^2+r^2\eta(r)(\eta(r)-1)\right)}>0\,,\quad \mbox{ for $\delta$ small enough}$$

  Therefore, if we show that $w(r_0)\leq 1$ then $w(r)\leq 1$ in $[r_0,R]$.

    We note that for {$w(r_0)=\frac{R(\frac{d^2}{2}+\delta)}{r_0(\frac{\eta^2(r_0)}{2}+\delta)}\leq 1$} to hold, then, necessarily $Rd^2\leq r_0\eta^2(r_0)$. Moreover, by continuity, once we show that $Rd^2< r_0\eta^2(r_0)$, there will be $\delta$ sufficiently small such that $w(r_0)\leq 1$ still holds. Then, we restrict the rest of the discussion to  the case $\delta=0$.

    After some computations, we obtain that $w(r_0)\leq 1$ holds if and only if the following function is nonnegative:
\begin{equation}\label{condexistence1jump}
    f(r_0,R)={\gamma(R) (r_0 b(r_0,R)-1)}-\sqrt{r_0}(\sqrt{R}-\sqrt{r_0})\frac{\partial b}{\partial R}(r_0,R),
\end{equation}
 with
{
\begin{equation}\label{expressionforb}
    b(x,y):=I_0(y) K_1(x)+ I_1(x) K_0(y).
\end{equation}
}

 \begin{lem}
   Given $r_0>0$ such that $r_0{\gamma(R)}\geq 1$ there exists a unique $R^*$ such that $f(r_0,R)\geq 0$ for all $R\in ]r_0,R^*]$.
 \end{lem}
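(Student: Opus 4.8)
The plan is to analyze the sign of the scalar function $f(r_0,\cdot)$ defined in \eqref{condexistence1jump} on the interval $(r_0,\infty)$ and to show it has a unique sign change. First I would record the behavior at the left endpoint: as $R \downarrow r_0$ the term $\sqrt{r_0}(\sqrt{R}-\sqrt{r_0})\partial_R b(r_0,R)$ vanishes, so $f(r_0,r_0^+) = \gamma(R)\,(r_0\,b(r_0,r_0)-1)$. Using the Wronskian identity $I_0(x)K_1(x)+I_1(x)K_0(x) = 1/x$ for modified Bessel functions, one gets $b(r_0,r_0) = 1/r_0$, hence $f(r_0,r_0^+) = 0$. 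Thus $R=r_0$ is always a root, and the real content is that $f$ becomes strictly positive just to the right of $r_0$ (so that a genuine interval of admissible radii exists) and then strictly negative for large $R$, crossing zero exactly once.

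For the right tail I would use the asymptotics $I_\nu(y) \sim e^y/\sqrt{2\pi y}$ and $K_\nu(y) \sim \sqrt{\pi/2y}\,e^{-y}$ as $y \to \infty$: the dominant contribution to $b(r_0,R)$ comes from $I_0(R)K_1(r_0)$, which grows like $e^R/\sqrt{R}$, and similarly $\partial_R b \sim I_1(R)K_0(r_0) \sim e^R/\sqrt{R}$. Plugging into \eqref{condexistence1jump}, the leading behavior of $f(r_0,R)$ is $\bigl(\gamma(R) r_0 K_1(r_0) - \sqrt{r_0}\sqrt{R}\,K_0(r_0)\bigr)\,e^R/\sqrt{R}$ up to lower-order terms, and the factor $-\sqrt{r_0 R}\,K_0(r_0)$ eventually dominates, so $f(r_0,R) \to -\infty$. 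Combined with $f(r_0,r_0^+)=0$ this already guarantees \emph{existence} of some $R^*$ with $f\geq 0$ on $(r_0,R^*]$; the delicate part is uniqueness, i.e. that once $f$ has returned to zero it never comes back positive.

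For uniqueness I would argue via a strict monotonicity/convexity property of $f(r_0,\cdot)$ after a suitable normalization. The natural route is to write $f(r_0,R) = \gamma(R)(r_0 b(r_0,R)-1) - \sqrt{r_0}(\sqrt R - \sqrt{r_0})\,\partial_R b(r_0,R)$ and differentiate in $R$, using the Bessel recurrences $I_0' = I_1$, $K_0' = -K_1$, $I_1' = I_0 - I_1/R$, $K_1' = -K_0 - K_1/R$ to express $\partial_R b$ and $\partial_R^2 b$ in closed form; since all of $I_0,I_1,K_0,K_1$ are positive and $I_0,I_1$ are increasing while $K_0,K_1$ are decreasing, one can show that $g(R) := r_0 b(r_0,R)-1$ is increasing and that the ratio $\frac{\sqrt R - \sqrt{r_0}}{g(R)}$ (or equivalently the competition between the two terms) is eventually increasing, so $f$ can vanish at most twice on $(r_0,\infty)$, one of those zeros being the trivial $R=r_0$. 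Hence there is a unique $R^* > r_0$ with $f(r_0,R^*)=0$ and $f(r_0,R) \geq 0$ precisely on $(r_0,R^*]$. The main obstacle I anticipate is exactly this last monotonicity claim: controlling the sign of $f'$ (or $f''$) uniformly requires nontrivial Bessel-function inequalities — e.g. bounds on the logarithmic derivatives $I_1/I_0$ and $K_1/K_0$ and on products like $I_0(R)K_0(r_0)$ versus $I_1(R)K_1(r_0)$ — and assembling these into a clean single-crossing statement is where the real work lies; the endpoint computation and the large-$R$ asymptotics are routine by comparison.
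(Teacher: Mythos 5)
Your endpoint computation $f(r_0,r_0)=0$ via the Wronskian $I_0K_1+I_1K_0=1/x$ and the large-$R$ decay $f(r_0,R)\to-\infty$ are both correct, but the claim that these two facts ``already guarantee existence of some $R^*$ with $f\geq 0$ on $(r_0,R^*]$'' is false, and this is exactly where your sketch never engages the hypothesis $r_0\gamma(R)\geq 1$. A function vanishing at $r_0$ and tending to $-\infty$ is perfectly consistent with being strictly negative on all of $(r_0,\infty)$. What actually produces a nontrivial interval of positivity is a second-order argument at the left endpoint: writing $g(x):=f(r_0,x)$ and $a(x):=b(r_0,x)$, one has $a'(r_0)=0$ (again by the Wronskian), hence $g'(r_0)=0$ as well, so the sign of $g$ immediately to the right of $r_0$ is decided by $g''(r_0)$; the computation gives $g''(r_0)\geq 0$ if and only if $r_0\gamma(R)\geq 1$, with the borderline case $r_0\gamma(R)=1$ salvaged by $g'''(r_0)>0$. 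This is the step your proposal omits, and it is the only place the hypothesis is used.

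For uniqueness the paper takes a genuinely different route from yours: rather than bounding the number of zeros of $g$, it bounds the number of \emph{critical points}. Using $a''=a-a'/x$, it rewrites $g'(x)=0$ as $y(x)=h(x)$, where $y:=a'/a$ solves the Riccati equation $y'=1-y/x-y^2$ with $y(r_0)=0$ (so $y$ is concave, increasing from $0$ to $1$), and $h$ is an explicit algebraic function of $\sqrt{x}$; one then argues these two curves cross exactly once in $(r_0,\infty)$, which together with $g(r_0)=0$, $g'(r_0)=0$, $g''(r_0)\geq 0$, $g\to-\infty$ yields a single positive maximum and hence the unique $R^*$. Your alternative---showing the quotient $\sqrt{r_0}(\sqrt R-\sqrt{r_0})\,\partial_R b(r_0,R)/(r_0 b(r_0,R)-1)$ is monotone so that it meets the level $\gamma(R)$ exactly once---would be a cleaner proof if the monotonicity were established, since it avoids the critical-point count altogether; but ``eventually increasing'' is not enough to rule out a second excursion above zero, and you concede you have not proved the needed monotonicity. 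As written, the proposal has a real gap on existence and an unfinished sketch on uniqueness.
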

\begin{proof}
  We fix $r_0$ and we let $g(x):=f(r_0,x)$ and $a(x):=b(r_0,x)$. We will show that, if $r_0{\gamma(R)}\geq 1$, then there is only one critical point (and therefore a global positive maximum since $g(x)\to -\infty$ as $x\to +\infty$) of $g$. In fact, (noting that $a''=a-\frac{a'}{x}$)
  $$g'(x)=\sqrt{r_0}\left(a'(x)\left({\gamma(R)}\sqrt{r_0}+\frac{1}{2\sqrt{x}}-\sqrt{\frac{r}{x}}\right)+(\sqrt{r}-\sqrt{x})a(x)\right).$$
  Observe that $g'(r_0)=0$ and that $g''(r_0)\geq 0$ if, and only if $r_0{\gamma(R)}\geq1$. Moreover, in the case $r_0{\gamma(R)}=1$, $g'(r_0)=g''(r_0)=0$ and $g'''(r_0)>0$.

  At a critical point,
  $$\frac{a'(x)}{a(x)}=\frac{2x(\sqrt{x}-\sqrt{r_0})}{2\sqrt{r_0}(\gamma(R) x-1)+\sqrt{x}}.$$ However, $y=\frac{a'}{a}$ satisfies $$y'=1-\frac{y}{x}-y^2\,,\quad y(r_0)=1,$$ and therefore it is a concave increasing function from $0$ to $1$ in $[r_0,+\infty[$ with
  initial slope $1$. On the other hand: $$h(x):=\frac{2x(\sqrt{x}-\sqrt{r_0})}{2\sqrt{r_0}(\gamma(R) x-1)+\sqrt{x}}$$ is easily seen to be increasing, with initial slope (at $r_0$)  less than or equal to $1$ (strictly less if $r_0{\gamma(R)}>1$ and tending to $\infty$. Moreover,
    it is concave at $r_0$ and at $+\infty$ and it only has two inflexion points in $]r_0,+\infty[$. Therefore, both functions $y$ and $h$ intersect only once {(see (a) and (b) in Figure 1)}.
\end{proof}
    {\small
    \begin{center}
        \textsc{\normalsize Figure 1:} numerics in the case when $ r_0 = 1 $
        \\[0ex]
        \includegraphics[width=12.5cm]{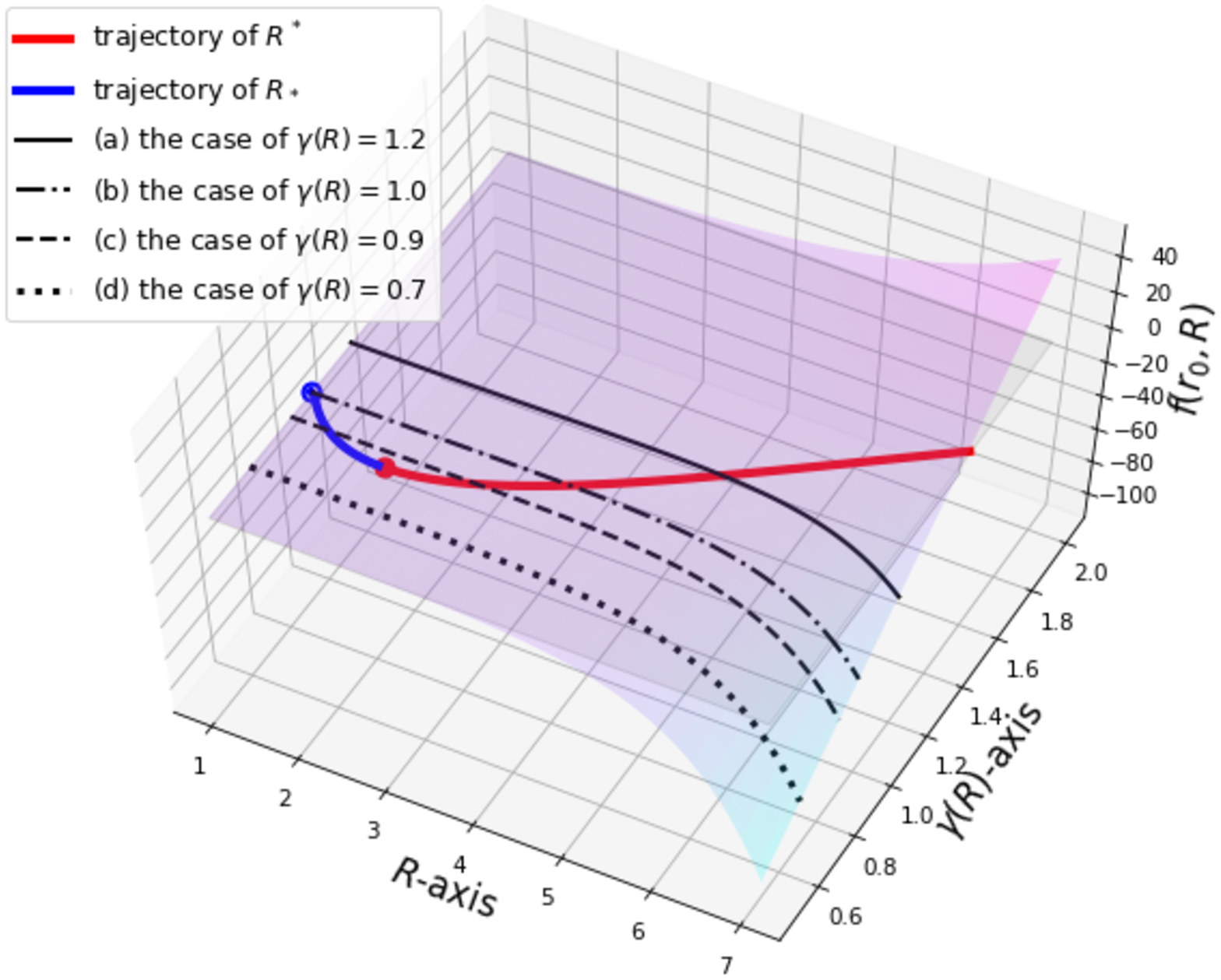}
    \end{center}
}
\noindent
{\bf Comment:} In case that $r{\gamma(R)}<1$ it might happen that $f$ has two critical points (and therefore there exists an interval $[R_*,R^*]$ such that if $r_0<R_*<R<R^*$ the problem has a solution {
    (see (c) in Figure 1)} or that there is no solution at all
    {
    (see (d) in Figure 1)}.
    \medskip

Altogether, we have obtained that, if $r_0{\gamma(R)}>1$, there exists an interval of possible radii $R\in ]r_0,R^*]$ such that the problem has a solution. In the case that $R>R^*$ the problem admits no solution.
\end{exmp}

We address now the case of exactly one jump on the interior of the domain, and we give sufficient conditions for this to happen.
\begin{exmp}
  Suppose that $r_0<r_1\leq R$ and that $\theta=\gamma(R)\chi_{[r_1,R]}$. Then, as before,  boundary conditions, boundary conditions at the jump point $r_1$ and continuity yield that the solution is exactly $$\eta(r)=\left\{\begin{array}
    {cc} \displaystyle 1+\frac{(d-1)K_0(r)(T_0(r)+T_1(r_0))}{K_0(r_1)(T_0(r_1)+T_1(r_0))} & {\rm in \ } [r_0,r_1] \\ \displaystyle 1+\frac{(d-1)K_0(r)(T_0(r)+T_1(R))}{K_0(r_1)(T_0(r_1)+T_1(R))} & {\rm in \ } [r_1,R]
  \end{array}\right. ,$$ where $d=\eta(r_1)$. Observe that, $d$ is easily by the relation:
    \begin{equation}\label{solutionford}
 \frac{(d-1)K_1(r_1)}{K_0(r_1)}\frac{(T_1(r_0)-T_1(R))(T_1(r_1)+T_0(r_1))}{(T_0(r_1)+T_1(R))(T_0(r_1)+T_1(r_0))}=d{\gamma(R)}.
 \end{equation}
  We observe that, defining $w(r):=\frac{r_1\alpha(\eta(r_1))}{r\alpha(\eta(r))}$ we will have to show that $|w|\leq 1$. For that, we observe that , as in the previous example, it suffices to show that $w(r_0)\leq 1$.

  First of all,
\begin{equation}\label{conditionford}
    w(r_0)=\frac{r_1 d^2}{r_0{\eta(r_0)^2}}\leq 1 \Leftrightarrow \eta(r_0)\sqrt{\frac{r_0}{r_1}}{\geq}d \Leftrightarrow d\leq \frac{r_0 b(r_0,r_1)-1}{\sqrt{r_1 r_0}b(r_0,r_1)-1},
\end{equation}
with $b$ defined as in \eqref{expressionforb}.
Then, using \eqref{solutionford}, we get
$$
    w(r_0)\leq 1 \Leftrightarrow F(r_0,r_1,R):=\frac{\sqrt{r_0}(\sqrt{r_1}-\sqrt{r_0})\frac{\partial b}{\partial R}(r_0,R)}{r_1 b(R,r_1)(r_0b(r_0,r_1)-1)}\leq \gamma(R).
$$
Observe that
{
$$
    \frac{\partial F}{\partial R}(r_0,r_1,R)=
     \frac{T_1'(R) (T_0(r_1) +T_1(r_0))F(r_0, r_1, R)}{(T_1(R) -T_1(r_0))(T_0(r_1) +T_1(R))} \geq 0.
$$
}
    Then, for $w(r_0)\leq 1$ to hold, it is necessary that $$F(r_0,r_1,r_1)=\frac{\sqrt{r_0}(\sqrt{r_1}-\sqrt{r_0})\frac{\partial b}{\partial r_1}(r_0,r_1)}{r_0 b(r_0,r_1)-1)}\leq \gamma(R); $$
    i.e. $f(r_0,r_1)\geq 0$ with $f$ defined in \eqref{condexistence1jump}, which implies that {$r_1\in ]r_0,R^*] $ with $R^*$} the one obtained in last example.
However, in order for $w(r_0)\leq 1$ to be satisfied, we observe that defining
\begin{equation*}
    G(r_0,r_1,R):=
     \gamma(R)r_1 b(R,r_1)(r_0 b(r_0,r_1)-1)-\sqrt{r_0}(\sqrt{r_1}-\sqrt{r_0})\frac{\partial b}{\partial R}(r_0,R),
\end{equation*}
in order that $G(r_0,r_1,R)\geq 0$, since $G(r_0,r_0,R)=0$ and $\frac{\partial G}{\partial r_1}(r_0,r_0,R)<0$, $r_1$ cannot be too close to $r_0$.

One sufficient condition is that
{
\begin{align*}
    \lim_{R\to\infty} G(r_0,r_1,R) &~ = \lim_{R \to \infty} I_1(R) \bigl( \gamma(R) r_1  K_0(r_1)(r_0 b(r_0,r_1)-1) -\sqrt{r_0}(\sqrt{r_1} -\sqrt{r_0})K_1(r_0) \bigr)
    \\
    &~ = \infty,
\end{align*}
}
i.e. \begin{equation}\label{sufficient condition 1 interior jump} \gamma(R) r_1  K_0(r_1)(r_0 b(r_0,r_1)-1) -    \sqrt{r_0}(\sqrt{r_1} -\sqrt{r_0})K_1(r_0)\geq 0,\end{equation}
since $\frac{\partial G}{\partial R}(r_0,r_1,R)<0$ for $G\leq 0$.
We illustrate this condition into two examples:
  \begin{enumerate}
      \item[{(a)}] $r_0=1$, $\gamma(R)=2$, the sufficient condition \eqref{sufficient condition 1 interior jump} is not verified for any $r_1$. However, as it can be seen in Figure 2, there is still a range of $r_1,R$ for which \eqref{conditionford} is satisfied, thus implying existence of solutions.
      \item[{(b)}] $r_0=4$, $\gamma(R)=2$, the sufficient condition is verified for a range of $r_1$ (and then there is a solution for all $R>r_1$.
  \end{enumerate}
    \begin{center}
        {\small
        \textsc{\normalsize Figure 2:} contour maps of $ G(r_0, r_1, R) $ under fixed $ r_0 > 0 $.
        \vspace{-1ex}
        $$
        \left\{ \hspace{-3ex} \parbox{8cm}{
            \vspace{-2ex}
            \begin{itemize}
                \item blue zone: the existence area of solution
                \vspace{-1.5ex}
                \item gray zone: non-effective area $ \{ r_1 > R \} $
            \vspace{-2ex}
            \end{itemize}
        } \right.
        \vspace{-1ex}
        $$
        \hspace{-2ex}\begin{tabular}{cc}
            \begin{minipage}[t]{8cm}
                \begin{center}
                    \includegraphics[height=5.85cm]{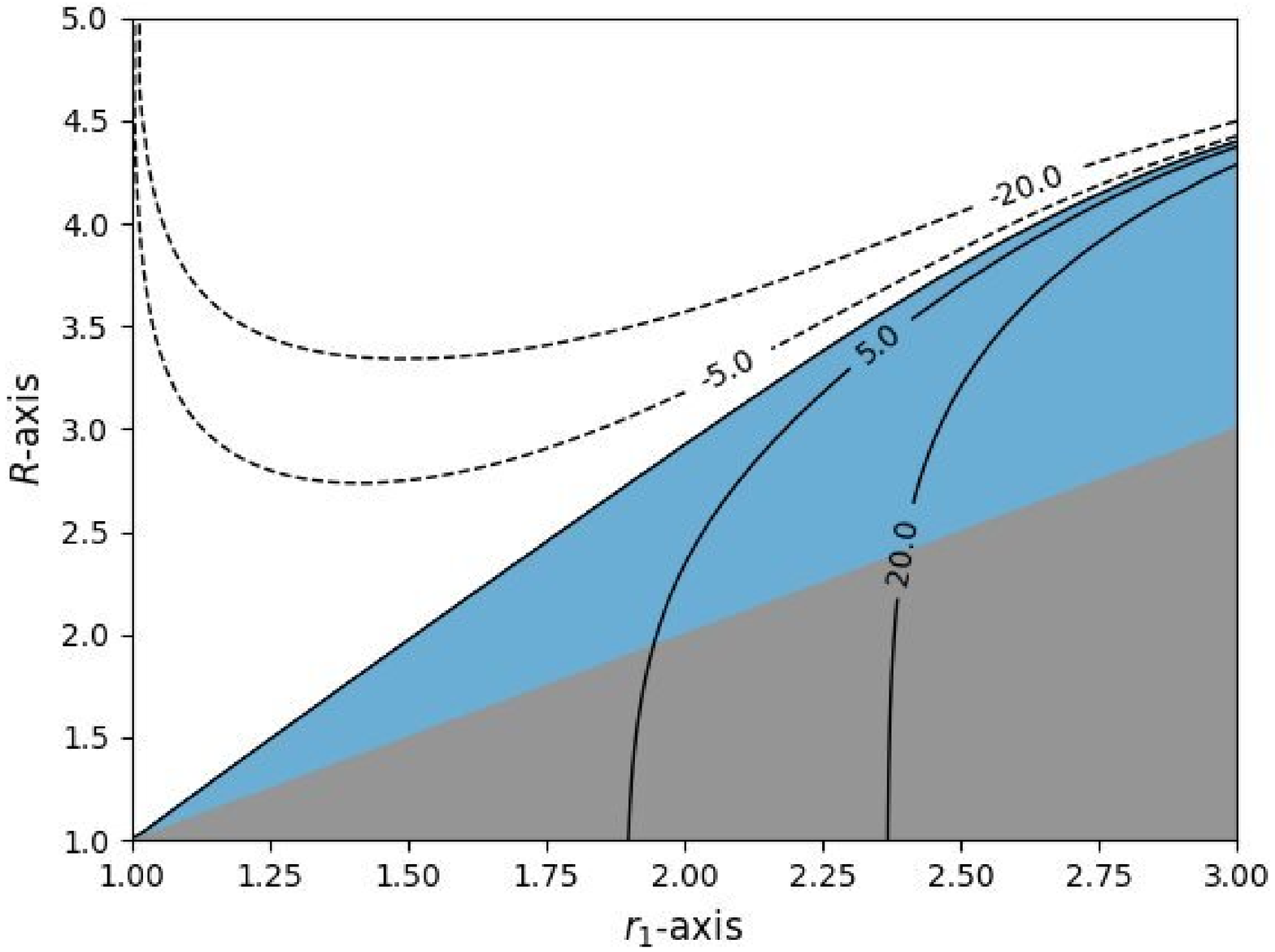}
                    \\
                    (a) the case of $ r_0 = 1.0 $ and $ \gamma(R) = 2.0 $
                \end{center}
            \end{minipage}
            &
            \begin{minipage}[t]{8cm}
                \begin{center}
                    \includegraphics[height=5.85cm]{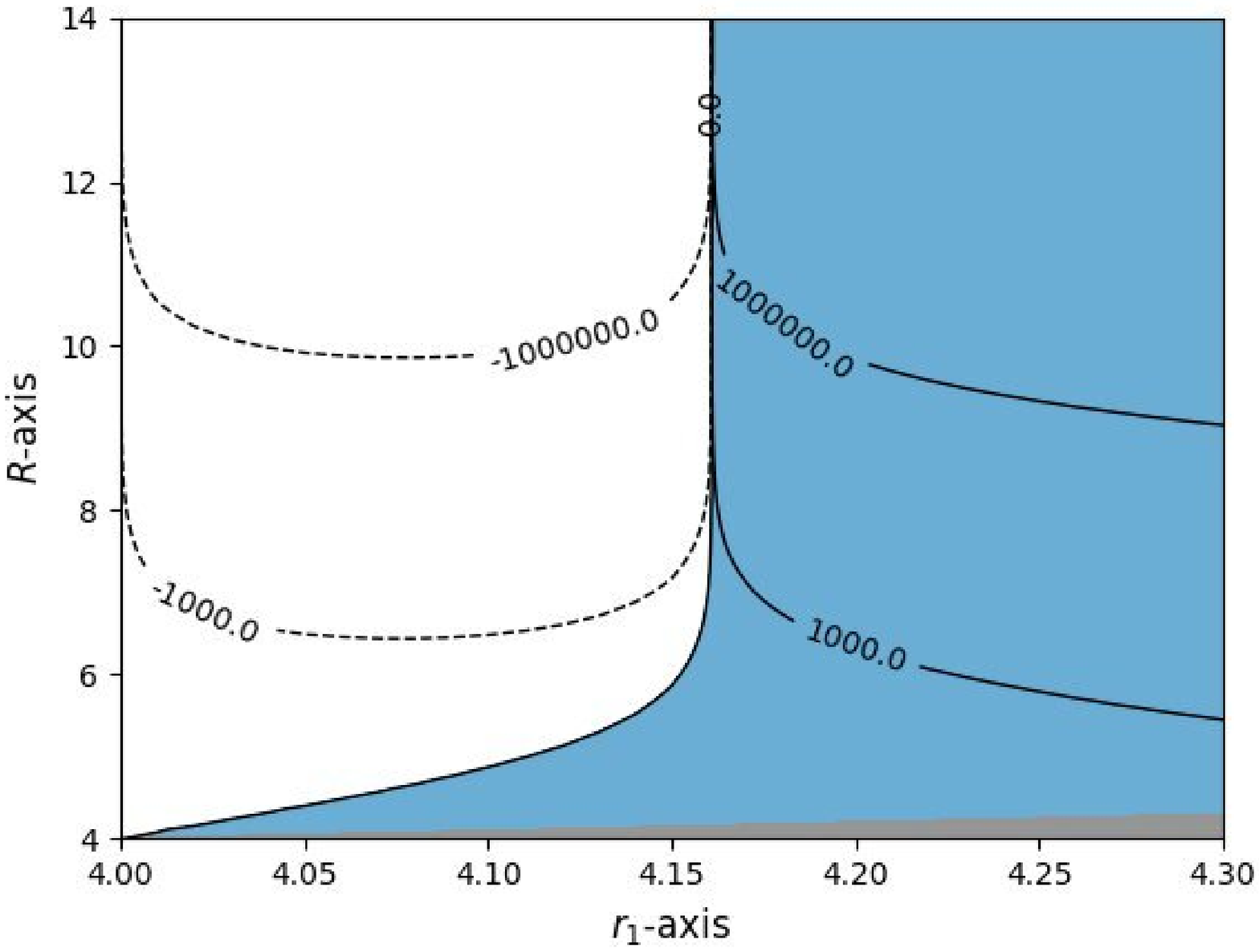}
                    \\
                    (b) the case of $ r_0 = 4.0 $ and $ \gamma(R) = 2.0 $
                \end{center}
            \end{minipage}
        \end{tabular}
        }
    \end{center}
\end{exmp}

The last example deals with the case of two possible jumps. We show that solutions may exist but now the casuistic depending on the radius of where solutions may jump and on both internal and external radius are much complicated than in previous cases.
\begin{exmp}
  Suppose that $\theta=\theta_0\chi_{[r_1,r_2]}+\gamma(R)\chi_{]r_2,R]}$. Then, letting $d_1:=\alpha(r_1)$ and $d_2:=\eta(r_2)$, we get  $d_2=\alpha^{-1}(\alpha(d_1)\frac{r_1}{r_2})$. Moreover, the solution for $\eta$ is given by $$\eta(r)=\left\{\begin{array}
    {cc} \displaystyle 1+\frac{(d_1-1)K_0(r)(T_0(r)+T_1(r_0))}{K_0(r_1)(T_0(r_1)+T_1(r_0))} & {\rm in \ } [r_0,r_1] \\ \\ 1+K_0(r)\left(\frac{\frac{1-d_1}{K_0(r_1)}-\frac{1-d_2}{K_0(r_2)}}{T_0(r_2)-T_0(r_1)} (T_0(r)-T_0(r_1))+\frac{d_1-1}{K_0(r_1)}\right) & {\rm in \ } [r_1,r_2]\\ \\ \displaystyle 1+\frac{(d_2-1)K_0(r)(T_0(r)+T_1(R))}{K_0(r_2)(T_0(r_2)+T_1(R))} & {\rm in \ } [r_2,R]
  \end{array}\right. ,$$ coupled with some compatibility conditions imposed from $$(\eta'(r_1))^+-(\eta'(r_1))^-=d_1\theta_0\,,\quad (\eta'(r_2))^+-(\eta'(r_2))^-=d_2(f(R)-\theta_0);$$which read as $$\left\{\begin{array}{ccc} C_1(d_1-1)+C_2(d_2-1) & = & (\theta_0)d_1 \\ C_3(d_1-1)+C_4(d_2-1)& =& (\gamma(R)-\theta_0)d_2\end{array}\right.,$$ with
$$\begin{array}{ccc}
     \displaystyle C_1 & = & \displaystyle-\frac{T_1(r_0)+T_0(r_2)}{r_1 K_0^2(r_1)(T_1(r_0)+T_0(r_1))(T_0(r_2)-T_0(r_1))}\\ \\  C_2 & = & \displaystyle \frac{1}{r_1 K_0(r_1) K_0(r_2)(T_0(r_2)-T_0(r_1))}  \\ \\ C_3 & = & \displaystyle \frac{1}{r_2 K_0(r_1) K_0(r_2)(T_0(r_2)-T_0(r_1))} \\ \\ C_4 & = & \displaystyle-\frac{T_1(R)+T_0(r_1)}{r_2 K_0^2 (r_2)(T_1(R)+T_0(r_2))(T_0(r_2)-T_0(r_1))}
  \end{array}$$
  In the case that $\alpha(\sigma)=\frac{\sigma^2}{2}$, then $d_2=d_1\sqrt{\frac{r_1}{r_2}}$ and therefore, fixing $r_1,r_2$, the compatibility conditions turn into
 \begin{equation}\label{condition2_jumps_positivity}
     \left\{\begin{array}{ccc} d_1(C_1+{\sqrt{\frac{r_1}{r_2}}C_2}-(C_1+C_2) & = & {\theta_0 d_1}
 \\
 d_1(C_3+\sqrt{\frac{r_1}{r_2}}C_4)-(C_3+C_4)& =& (\gamma(R)-\theta_0)d_2
 \end{array} \right..
 \end{equation}
 In particular, the following inequality need to be verified (since $C_1+\sqrt{\frac{r_1}{r_2}}C_2\leq C_1+C_2\leq 0$),
 $$
 0\leq d_1\leq \frac{C_1+C_2}{C_1+\sqrt{\frac{r_1}{r_2}}C_2}.
 $$
 Moreover, it is easy to see that
 $$
     \frac{C_1+C_2}{C_1+\sqrt{\frac{r_1}{r_2}}C_2}\leq 1.
 $$
 On the other hand, if $C_3+\sqrt{\frac{r_1}{r_2}}C_4\geq 0$, then, {the following necessary condition for the second equation in \eqref{condition2_jumps_positivity}:
 \begin{equation*}
     d_1 \geq \frac{C_3 +C_4}{C_3 +\sqrt{\frac{r_1}{r_2}} C_4} \geq 0,
 \end{equation*}
 }
is always satisfied. Otherwise, $$\frac{C_3+C_4}{
C_3+\sqrt{\frac{r_1}{r_2}}C_4}\geq 1.$$Therefore, for \eqref{condition2_jumps_positivity} to hold it suffices to show that
$$
    d_1\leq \frac{C_1+C_2}{C_1+\sqrt{\frac{r_1}{r_2}}C_2}.
$$

Adding the two equalities in \eqref{condition2_jumps_positivity}, we obtain
{
\begin{align}\label{condition2jumps}
    0<d_1= ~& -\frac{C_1+C_2+\sqrt{\frac{r_2}{r_1}}(C_3+C_4)}{\gamma(R)-(C_1+\sqrt{\frac{r_1}{r_2}}C_2+\sqrt{\frac{r_2}{r_1}}C_3+C_4)}
    \nonumber
    \\
    \leq &~ \bar{d}_1 := \frac{C_1+C_2}{C_1+\sqrt{\frac{r_1}{r_2}}C_2}.
\end{align}
}
We observe that this is not always the case since the following inequality is always true (and therefore {$\gamma(R)$} needs to be big enough):
\begin{equation}\label{not_always}
    \frac{C_1+C_2+\sqrt{\frac{r_2}{r_1}}(C_3+C_4)}{C_1+\sqrt{\frac{r_1}{r_2}}C_2+\sqrt{\frac{r_2}{r_1}}C_3+C_4}\geq \frac{C_1+C_2}{C_1+\sqrt{\frac{r_1}{r_2}}C_2}.
\end{equation}
To show this last inequality we use the fact that
$$
C_1+\sqrt{\frac{r_1}{r_2}}C_2+\sqrt{\frac{r_2}{r_1}}C_3+C_4=C_1+C_2+{\sqrt{\frac{r_2}{r_1}}} \left(2-\sqrt{\frac{r_2}{r_1}}\right)C_3+C_4\leq C_1+C_2+C_3+C_4\leq 0.
$$
Then, \eqref{not_always} is seen to be equivalent to $$C_2 C_3\leq C_1 C_4,$$ which is easy to show to hold.

  Observe that we have not imposed that $r_1>r_0$. In case $r_1=r_0$, then \eqref{condition2jumps} will suffice for existence of solution. However, in the case that $r_0<r_1$, then there is an extra compatibility condition as in the previous examples:
  Defining $w(r):=\frac{r_1\alpha(d)}{r\alpha(\eta(r))}$, the new compatibility condition is
    {
    \begin{equation}\label{condition3}
        1 -w(r_0) \geq 0.
    \end{equation}
}

  We conclude by showing some examples of the difficulty of the casuistic. We first show an example (Fig.3) in which \eqref{condition2jumps} is violated for any admissible range of boundary data and therefore no solution exists. In the second example (Fig.4), we obtain two different threshold values for $\gamma(R)$ for which \eqref{condition2jumps}, resp. \eqref{condition3}, is satisfied. We point out that, in the particular case of Fig. 4, the only possible situation for an admissible (big enough) boundary datum is $r_0=r_1$.
\begin{center}
\begin{tabular}{cc}
    \begin{minipage}[t]{7.77cm}
        {\small
        \begin{center}
            \textsc{\normalsize Figure 3:} numerics in the case when
            \\[1ex]
            $ r_0 = 0.5 $, $ r_1 = 1.0 $, $ r_2 = 9.0 $, $ R = 10.0 $
            \\[1ex]
        \includegraphics[width=7.5cm]{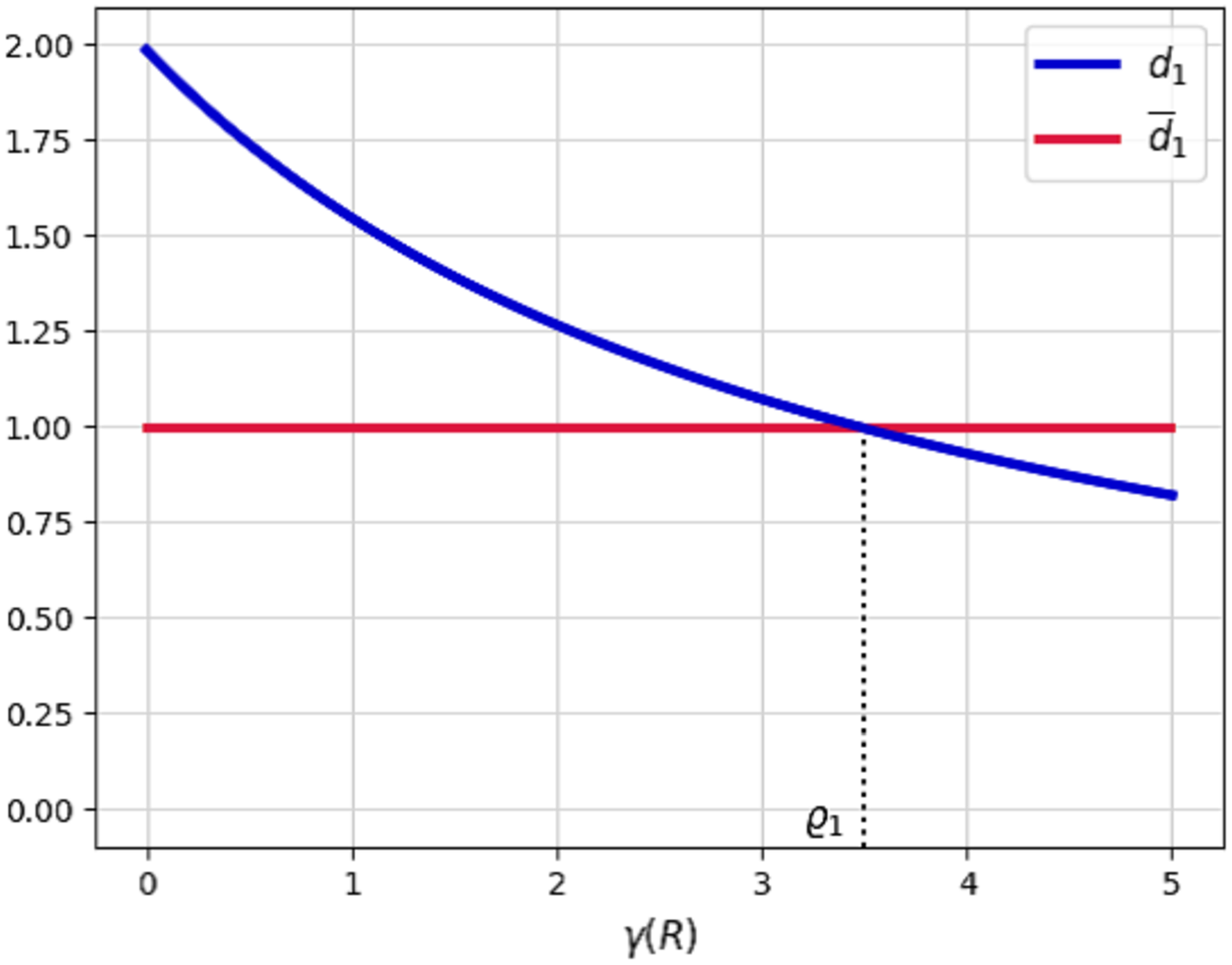}
        \\[1ex]
        \parbox{7.25cm}{In this case, we find a threshold point $ \varrho_1 $ $ (\approx 3.5) $ such that \eqref{condition2jumps} does not hold for all $ \gamma(R) \in [0, \varrho_1) $, and the range $ [0, \varrho_1) $ contains the physically admissible range $ [0, \pi) $ of the crystalline orientation angle.
            }
        \end{center}
        }
    \end{minipage}
    & \begin{minipage}[t]{7.77cm}
    {\small
        \begin{center}
            \textsc{\normalsize Figure 4:} numerics in the case when
            \\[1ex]
            $ r_0 = 1.0 $, $ r_1 = 2.5 $, $ r_2 = 9.0 $, $ R = 10.0 $
            \\[1ex]
        \includegraphics[width=7.5cm]{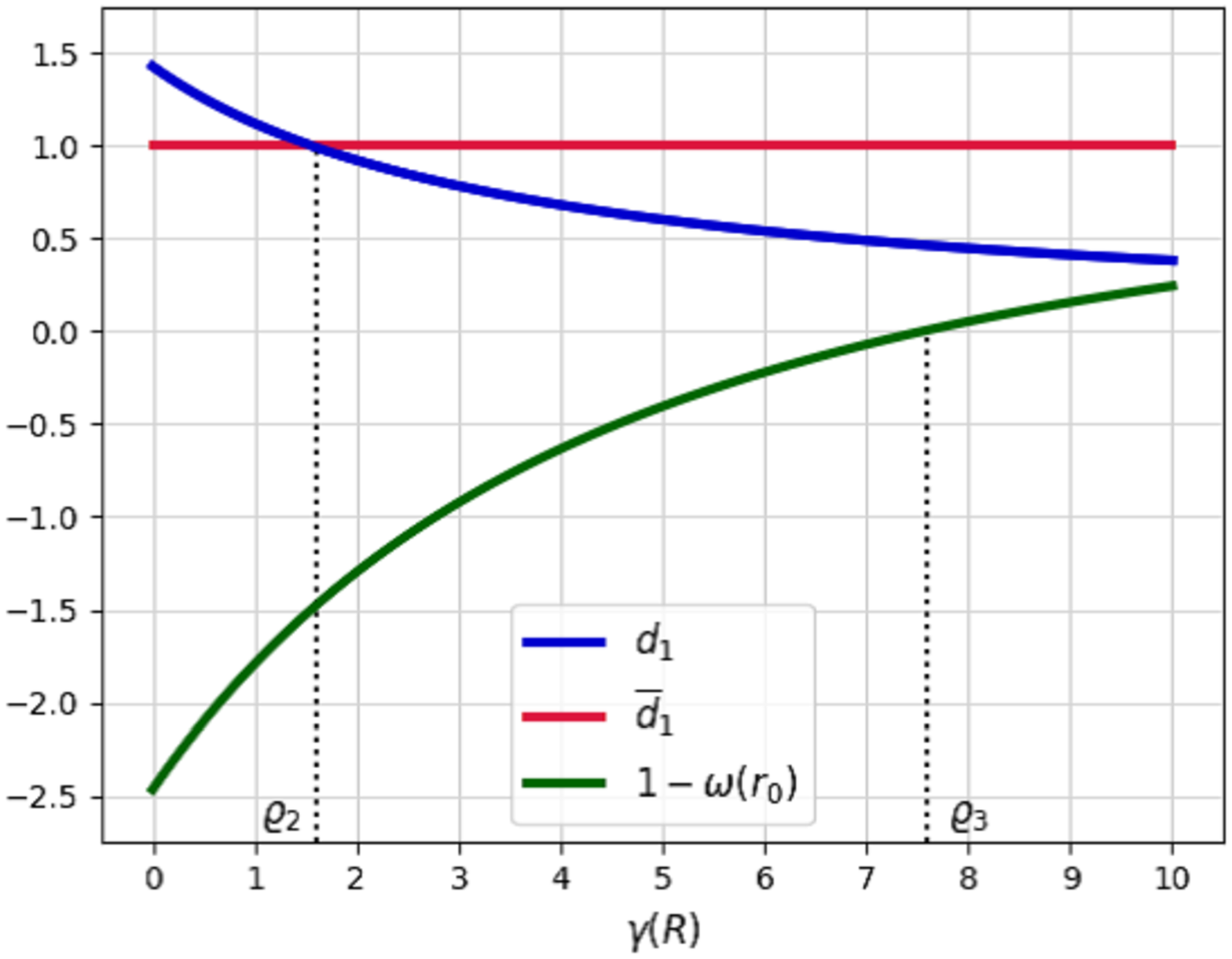}
        \\[1ex]
            \parbox{7.25cm}{In this case, we find two threshold points $ \varrho_2 \in (1, 2) $ and $ \varrho_3 \in (7, 8) $ such that \eqref{condition2jumps} (resp. \eqref{condition3}) holds iff. $ \gamma(R) \geq \varrho_2 $ (resp. $ \gamma(R) \geq \varrho_3 $).
            }
        \end{center}
    }
    \end{minipage}
\end{tabular}
\end{center}
\end{exmp}

\end{document}